\documentclass[11pt]{article}
\usepackage{enumerate}
\usepackage{amsmath}
\usepackage{amsthm}
\usepackage{amsfonts}
\usepackage{amssymb}
\usepackage[numbers]{natbib}
\usepackage{xcolor}
\setlength{\bibsep}{0.0pt}
\usepackage{graphicx}
\usepackage{tikz}

\usepackage{fullpage}
\usepackage{xcolor}
\usepackage{hyperref}
\usepackage{bbm}
\newcommand*\samethanks[1][\value{footnote}]{\footnotemark[#1]}

\newtheorem{question}{Question}
\numberwithin{question}{section}
\newtheorem{conjecture}[question]{Conjecture}

\newtheorem{corollary}[question]{Corollary}
\newtheorem{theorem}[question]{Theorem}
\newtheorem{proposition}[question]{Proposition}
\newtheorem{lemma}[question]{Lemma}
\newtheorem{claim}[question]{Claim}

\newtheorem{definition}[question]{Definition}

\newtheorem{remark}[question]{Remark}
\numberwithin{equation}{section}

\title{Connectivity for square percolation and coarse cubical rigidity
in random right-angled Coxeter groups} \author{Jason
Behrstock\thanks{Department of Mathematics, Lehman College and The
Graduate Center, CUNY, New York, USA. Email:
\texttt{jason.behrstock@lehman.cuny.edu}.  Research supported as a 
Simons Fellow from the Simons Foundation.}\and R.
Altar {\c{C}}i{\c{c}}eksiz\thanks{Institutionen f\"or Matematik och
Matematisk Statistik, Ume{\aa} Universitet, Sweden.  Emails:
\texttt{altar.ciceksiz, victor.falgas-ravry@umu.se}.  Research
supported by Swedish Research Council grant VR 2021-03687.} \and
Victor Falgas-Ravry\samethanks}

\begin{document}
\maketitle
\begin{abstract}
We consider random right-angled Coxeter groups, $W_{\Gamma}$, 
whose presentation graph $\Gamma$ is taken to be an Erd{\H o}s--R\'enyi random 
graph, i.e., $\Gamma\sim \mathcal{G}_{n,p}$. We use techniques from 
probabilistic combinatorics to establish several new results about the geometry of these random groups.

We resolve a conjecture of Susse and determine the connectivity
threshold for square percolation on the random graph $\Gamma \sim
\mathcal{G}_{n,p}$.  We use this result to determine a large range of $p$ 
for which the random right-angled Coxeter group $W_{\Gamma}$ has a
unique cubical coarse median
structure. Until recent work of Fioravanti, Levcovitz and Sageev, there were no non-hyperbolic examples of groups with cubical
coarse rigidity; our present results show the property is in fact typically satisfied by a random RACG for a wide range of the parameter
$p$, including $p=1/2$.
\end{abstract}

\section{Introduction}

The \emph{right-angled Coxeter group (or RACG)} $W_{\Gamma}$ with
presentation graph $\Gamma=(V,E)$ is the group with generators $V$ and
relations $a^2=\mathrm{id}$ and $ab=ba$ for all $a\in V$ and $ab\in
E$.  We shall investigate RACGs for which the
presentation graph $\Gamma$ is an outcome of the Erd{\H o}s--R\'enyi
random graph model.  This fundamental random graph model is defined as
follows: given $n\in \mathbb{N}$ and a probability $p=p(n)\in [0,1]$,
we define a random graph $\mathcal{G}_{n,p}$ on the vertex set
$[n]:=\{1,2, \ldots n\}$ by including each of the $\binom{n}{2}$
possible edges with probability $p$, independently at random.  We
write $\Gamma\sim \mathcal{G}_{n,p}$ to denote the fact that $\Gamma$
is a random graph with the same distribution as $\mathcal{G}_{n,p}$.
Given such a random graph $\Gamma\sim \mathcal{G}_{n,p}$, we can then
define a random RACG $W_{\Gamma}$, thereby obtaining an interesting
model for a \emph{random right-angled Coxeter group}. Random group models have been extensively studied since the work of Gromov~\cite{Gromov:hyperbolic} in the 1990s, while the geometry of random right-angled Coxeter groups was first considered by Charney and Farber~\cite{CharneyFarber12} in 2012, and has received significant attention, see e.g.\ ~\cite{BehrstockHagenSisto:coxeter,behrstock2018global, Behrstock2024thickness}.

One approach for studying the geometry of a RACG is to investigate the structure of 
its \emph{flats} (isometrically embedded copies of
$\mathbb R^{n}$ in the Davis complex associated to the RACG). 
Using the fact that induced $4$--cycles in $\Gamma$ give rise to two-dimensional 
flats, Dani and Thomas \cite{DaniThomas:divcox} and 
Behrstock, Hagen and Sisto~\cite{BehrstockHagenSisto:coxeter} 
showed that 
important geometric properties of the group $W_{\Gamma}$
could be understood by studying an auxiliary graph encoding the induced $4$--cycles 
of $\Gamma$. The \emph{square graph} of $\Gamma$ is the graph 
$S(\Gamma)$ whose  
vertices correspond to the induced $4$--cycles (or
\emph{squares}) of $\Gamma$, and with edges corresponding to pairs of
induced $4$--cycles having a non-edge in common (we refer to these 
non-edges as \emph{diagonals} of the square).

In probabilistic and combinatorial arguments, it is often convenient to work with a slightly different but closely related auxiliary graph (which, 
confusingly, is also referred to as  ``the square graph'' in the literature), denoted by $T_{1}(\Gamma)$ and encoding 
essentially the same information --- indeed, $S(\Gamma)$ is the line graph of $T_{1}(\Gamma)$;  both $S(\Gamma)$ and $T_{1}(\Gamma)$ are formally defined in Section~\ref{section: preliminaries} below. 
Note that if $\Gamma\sim \mathcal{G}_{n,p}$, then the associated 
square graph (whether $S(\Gamma)$ or $T_1(\Gamma)$) is also a random graph, but with a starkly different and more complex distribution featuring in particular some significant dependencies between the edges.

Studying the square graphs associated to a random graph $\Gamma \sim \mathcal{G}_{n,p}$, is 
interesting both from probabilistic and geometric group theory 
perspectives. Indeed from a probabilistic perspective, these square graphs arising from random graphs present novel challenges, while from a group theoretic perspective they are a source of interesting examples. 
The specific question of the connectivity
threshold of $S(\Gamma)$ when $\Gamma \sim \mathcal{G}_{n,p}$ was raised in the work of
Susse~\cite{susse2023morse} on the existence of Morse subgroups in 
a random RACG, $W_{\Gamma}$. Susse proved that for edge probabilities $p=p(n)$ satisfying  
$n^{-1}\ll  p \leq (1-\varepsilon)\sqrt{(\log 
n)/n}$, the square graph 
$S(\Gamma)$ is a.a.s.\ not connected \cite[Corollary 
3.8]{susse2023morse} (the $n^{-1}\ll  p$ 
hypothesis is needed to ensure  
that $S(\Gamma)$ a.a.s\ has at least two vertices), and he conjectured in~ \cite[Conjecture 
3.9]{susse2023morse} that for $p$ above this range $S(\Gamma)$  is a.a.s.\ connected.

We fully resolve Susse's conjecture and determine the threshold for connectivity of the graphs 
$T_1(\Gamma)$ and $S(\Gamma)$.
\begin{theorem}\label{theorem: connectivity}
	Let $\Gamma\sim \mathcal{G}_{n,p}$ for some function $p=p(n)$ and let $\varepsilon>0$ be fixed. The following hold:
	\begin{enumerate}[(i)]
		\item if $n^{-1}\ll p\leq (1-\epsilon)\sqrt{\frac{\log n}{n}}$, then $S(\Gamma)$ is a.a.s.\ not connected; 
		\item if  $(1+\epsilon)\sqrt{\frac{\log n}{n}}\leq p \leq 1- \omega(n^{-2})$, then $S(\Gamma)$ is a.a.s.\ connected;
		\item if $p\leq \sqrt{2-\epsilon}\sqrt{\frac{\log n}{n}}$, then $T_1(\Gamma)$ is a.a.s.\ not connected; 
		\item if $\sqrt{2+\epsilon}\sqrt{\frac{\log n}{n}}\leq p \leq 1- \omega(n^{-2})$, then $T_1(\Gamma)$ is a.a.s.\ connected.
	\end{enumerate}
\end{theorem}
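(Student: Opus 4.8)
The plan is to prove (i) and (iii) by exhibiting the local obstruction that disconnects the square graph, and to prove (ii) and (iv) by showing that above the stated threshold this obstruction is the only one. For (iii), the obstruction is an isolated vertex of $T_1(\Gamma)$ --- a non-edge $\{a,c\}$ of $\Gamma$ that is a diagonal of no square --- and its dominant cause is $N_\Gamma(a)\cap N_\Gamma(c)=\emptyset$, of probability $(1-p^2)^{n-2}=e^{-(1+o(1))np^2}$; hence the expected number of isolated vertices is $n^{2-c^2+o(1)}$ when $p=c\sqrt{(\log n)/n}$, which diverges for $c<\sqrt2$, and since these events are independent over disjoint pairs the second moment method yields an isolated vertex (and $\ge 2$ vertices in all) a.a.s. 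For (i), the obstruction is an isolated square of $S(\Gamma)$ --- a square both of whose diagonals are diagonals of no other square; restricting to the cleaner sub-event that each diagonal has exactly the opposite pair as its common neighbourhood (so each of the remaining $n-4$ vertices independently fails to be a common neighbour of either diagonal, a factor $(1-p^2)^{2(n-4)}$), the expected number becomes $n^{2-2c^2+o(1)}$, which diverges for $c<1$, and a second moment estimate again finishes. This recovers and slightly sharpens \cite[Corollary~3.8]{susse2023morse}.

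For the upper bounds, I would first use the reduction that $S(\Gamma)$ is the line graph of $T_1(\Gamma)$, so $S(\Gamma)$ is connected iff $T_1(\Gamma)$ has at most one component containing an edge; in particular (iv) follows from (ii) together with the first-moment facts that for $p\ge(\sqrt2+\eps)\sqrt{(\log n)/n}$ a.a.s.\ $T_1(\Gamma)$ has no isolated vertex and that for $p\le 1-\omega(n^{-2})$ there is a.a.s.\ at least one square (the expected number being $\asymp n^4p^4(1-p)^2\to\infty$). So it suffices to prove $S(\Gamma)$ connected for $(1+\eps)\sqrt{(\log n)/n}\le p\le 1-\omega(n^{-2})$, and I would treat the two ends of this range separately. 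When $p\gg((\log n)/n)^{1/4}$ (in particular for all constant $p$), a.a.s.\ every non-edge lies in a square and any two non-edges $\{a,c\},\{a',c'\}$ have a non-adjacent pair of common neighbours --- i.e.\ a non-edge inside $N(a)\cap N(c)\cap N(a')\cap N(c')$, whose size has expectation $\asymp np^4\gg\log n$ and concentrates well enough to survive a union bound over the $O(n^4)$ pairs --- so $T_1(\Gamma)$ has diameter at most $2$ and we are done. When $p\ge 1-\eps_0$, the complement $\bar\Gamma\sim\mathcal G_{n,1-p}$ is very sparse, squares of $\Gamma$ correspond to induced copies of $2K_2$ in $\bar\Gamma$, and a.a.s.\ $\bar\Gamma$ has $\omega(1)$ edges and no short paths or triangles near any edge; then almost every edge of $\bar\Gamma$ lies in an induced $2K_2$, any two such edges either form an induced $2K_2$ directly or share a common partner edge, and $T_1(\Gamma)$ (hence $S(\Gamma)$) is connected --- in fact essentially complete.

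The remaining, and decisively hardest, regime is $p$ small (so $\Gamma$ is sparse, $p=n^{-\Omega(1)}$ and a.a.s.\ of bounded clique number), with $p$ just above $\sqrt{(\log n)/n}$ being the critical case, where $T_1(\Gamma)$ has diameter $\Theta((\log n)/\log\log n)$ and one must run the classical argument ruling out all components of $T_1(\Gamma)$ of size between $2$ and $|V(T_1(\Gamma))|/2$ (after which the only components are isolated vertices and a single large one, so $S(\Gamma)$ is connected). The structural input is that a component $C$ of $T_1(\Gamma)$ is \emph{closed}: if $\{a,c\}\in V(C)$ then every non-edge inside $N_\Gamma(a)\cap N_\Gamma(c)$ again lies in $V(C)$. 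Together with typicality of $\Gamma$ --- a.a.s.\ bounded clique number and every non-edge having $\le 10\log n$ common neighbours --- closure forces, for a component of size $m$ with vertex set $W:=\bigcup_{e\in V(C)}e$ (so $|W|\le 2m$), that almost all of $N(a)\cap N(c)$ lies inside $W$ for every $\{a,c\}\in V(C)$; equivalently, the number of incidences $\bigl(\{a,c\}\in V(C),\ v\notin W \text{ with } v\in N(a)\cap N(c)\bigr)$ is $O(m)$, while its mean --- taken over the $W$-to-exterior edges, which are independent of the edges inside $W$ --- is $(1+o(1))c^2m\log n$ when $p=c\sqrt{(\log n)/n}$. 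A Chernoff bound on this sum of independent contributions, using that each exterior vertex is a common neighbour of some non-edge of $V(C)$ with probability $\Omega(mp^2)$, bounds the probability of any fixed candidate $(W,V(C))$ of size $m$ by $n^{-(1+o(1))c^2m}$, the constant being exactly $c^2$ at $m=2$ (where the candidate is precisely an isolated square). One then union-bounds over candidates; the delicate point is to enumerate the candidate components of size $m$ with total weight only $n^{2+o(m)}$: the naive spanning-tree-of-$T_1(\Gamma)$ encoding is wasteful when a component folds back onto few vertices, so one uses the a.a.s.\ bound $\Delta(T_1(\Gamma))=O((\log n)^2)$ to bound the number of tree shapes and charges each newly revealed square against its probability $\asymp p^4=(\log n)^2 n^{-2}$, after which $\sum_{m\ge 2}n^{2+o(m)}n^{-(1+o(1))c^2m}\to 0$ precisely when $c>1$. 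Components larger than roughly $\sqrt{n/\log n}$, where the per-candidate bound weakens, are absorbed by a separate expansion argument showing that a large closed set of non-edges must, via closure and a co-degree-expansion property of $\Gamma$, contain all of them. \textbf{The main obstacle} is exactly this last regime: controlling medium-to-large components of these strongly edge-dependent graphs, and in particular making the candidate enumeration tight enough to reach the sharp leading constants $1$ and $\sqrt2$ rather than merely some constant-factor window around them.
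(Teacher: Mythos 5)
Your treatment of parts (i) and (iii) matches the paper: both are second-moment arguments, (iii) on non-edges $\{a,c\}$ with $N(a)\cap N(c)=\emptyset$ (isolated vertices of $T_1(\Gamma)$) and (i) on isolated squares, the latter being essentially Susse's argument which the paper cites directly. Your reduction of (iv) to (ii) plus the non-existence of isolated vertices, the diameter-$2$ argument for $p\gg (\log n/n)^{1/4}$, and the complement-graph argument for $p$ near $1$ also coincide with the paper's Propositions~\ref{prop: large p connectivity} and~\ref{prop: connectivity above n to the -1/3}.

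Where you diverge is the critical regime, and there your route is genuinely different from the paper's. The paper proceeds in two steps. First it proves Theorem~\ref{theorem: second component} (via a fairly involved partitioning and sprinkling argument spread over $2M^2$ blocks, together with the giant-connecting lemma, Proposition~\ref{prop: connecting lemma}), which shows that a.a.s.\ \emph{every} non-giant component of $T_1(\Gamma)$ has size $O\bigl((\log n/\log\log n)^2\bigr)$ for $p$ in the entire relevant range. This eliminates in one stroke the medium and large components that your scheme tries to handle by the pair of a BFS-enumeration argument (for small $m$) and an expansion argument (for $m\gtrsim\sqrt{n/\log n}$); as you observe, there is a potential gap between those two ranges which your outline does not close. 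Second, to rule out small components, the paper first-moments over the \emph{support} set $S$ of size $k$, not over the component itself, and crucially imports the extremal bound $e(\Gamma[S])\ge 2k-4$ (Proposition~\ref{prop: combinatorial characterisation of thickness}). That inequality supplies a factor $p^{2k-4}=n^{-(1+o(1))(k-2)}$ which, multiplied against $\binom{n}{k}\le (en/k)^k$ and the Janson estimate $\exp\bigl(-\tfrac{k}{2}p^2n(1-o(1))\bigr)$ coming from Lemma~\ref{lemma: bound on number of diag in order 1 component} ($|C|\ge k/2$), yields $\mathbb{E}X_k \le n^{2-kc^2/2+o(1)}$, which decays for $c>1$ already at $k=4$. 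This is precisely the mechanism that delivers the leading constant~$1$: without the $p^{2k-4}$ factor the naive support union bound would only reach $c>\sqrt{2}$. Your proposal never invokes this extremal bound and instead hopes to reach the sharp constant through a tighter candidate enumeration (spanning-tree encoding of $T_1$-components under $\Delta(T_1(\Gamma))=O((\log n)^2)$) together with a Chernoff bound on $W$-to-exterior incidences. That is a plausible alternative in spirit, and correctly identifies closure of components and the typical co-degree bound as the structural inputs, but both the enumeration bound $n^{2+o(m)}$ and the precise form of the concentration you need are asserted rather than proved, and you yourself flag this as the main unresolved point. So: same skeleton and same easy parts, genuinely different and substantially less worked-out argument in the regime that actually carries the theorem.
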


The thresholds for connectivity of the graphs $S(\Gamma)$ and
$T_1(\Gamma)$ differ.  Indeed, for $p$ in the range
$\left[\sqrt{1+\epsilon}\sqrt{\log n /n},
\sqrt{2-\epsilon}\sqrt{\log n /n}\right]$, $T_1(\Gamma)$
consists a.a.s.\ of a unique non-trivial component together with a
number of isolated vertices, corresponding to non-edges from
$E(\overline{\Gamma})$ that do not lie in any induced square of
$\Gamma$.  So for $p$ in this range a.a.s.\ $T_1(\Gamma)$ is not
connected while its line graph $S(\Gamma)$ is connected.

If one drops from the presentation of a RACG the requirement that  
each generator is of order two, one obtains the presentation for a  
\emph{right-angled Artin group (RAAG)}. These groups interpolate 
between free groups (RAAGs associated to edgeless graphs) 
and free abelian groups (RAAGs associated to complete graphs). 
It was proved by Davis and Januszkiewicz in~\cite{DavisJanuszkiewicz} that every right angled Artin group is a 
finite-index subgroup of some RACG, but it turns out that not every RACG is a 
finite-index subgroup of some right angled Artin group. Several  
interesting papers on determining which RACGs are 
virtually\footnote{Two groups are said to be \emph{virtually isomorphic}, when a finite 
index subgroup of one is isomorphic to a finite index subgroup of the 
other.} RAAGs 
have been written recently, including  
\cite{CashenEdletzberger:visual2dimRAAGRACG, 
DaniLevcovitz:RAAGsubgroupsRACGs}. 
Note that, RAAGs 
either have quadratic divergence\footnote{\emph{Divergence} functions are a key geometric object for the study of geodesic spaces. Roughly speaking, a divergence function measures the length of a shortest path between pairs of points at distance $r$ apart in a space when forced to avoid a ball of radius $\alpha r$ around a third point, for some fixed $\alpha\in (0,1)$, as a function of $r$. Its growth rate (linear, quadratic, ...) has important implications for the geometry of the space. See~\cite{BehrstockDrutu:thick2} and~\cite{DrutuMozesSapir} for formal definitions of divergence.}, or are free abelian (and thus have 
linear divergence), or split as free products (and thus have 
exponential divergence). By~\cite[Theorem 3.2]{BehrstockHagenSisto:coxeter} and~\cite[Theorem~V]{BehrstockHagenSisto:coxeter}
 a RACG is a.a.s.\ quasi-isometric to a 
group that splits or to an abelian group if and only if $p=p(n)$ goes very 
quickly to 0 or 1 (see~\cite{BehrstockHagenSisto:coxeter} for the precise bounds). Hence for $p=p(n)$ bounded away from $0$ and $1$, a random RACG 
if a.a.s.\ quasi-isometric (or, a fortiori, virtually) a RAAG 
only if it has a.a.s.\ quadratic divergence, which by 
\cite[Theorem~1.6]{behrstock2022square} occurs if and only if	
$(\sqrt{\sqrt{6}-2}+\epsilon ) / \sqrt{n} \leq p(n) \leq 1- 
(1+\epsilon) \log{n} / n$. 
Since both RAAGs and RACGs act geometrically on CAT(0) cube complexes with factor 
systems, it follows from 
\cite[Corollary~E]{BehrstockHagenSisto:quasiflats} that 
the structure of their quasiflats can be described in terms of 
certain standard flats in the CAT(0) structure. In the case of RAAGs, 
one can deduce they have quadratic divergence by verifying the 
connectivity associated to the coarse intersection 
pattern of the cosets of their standard $\mathbb Z \times \mathbb Z$ 
subgroups \cite[Theorem~10.5]{BDM}. 
By pulling back the 
subgoups associated to squares in $\Gamma$ to the RAAG, it follows 
that any RACG 
which is quasi-isometric to a RAAG with quadratic divergence would 
inherit this connectivity structure and thus have 
$S(\Gamma)$ connected.

Susse proved in \cite[Corollary~3.7]{susse2023morse} 
(see Theorem~\ref{theorem: connectivity}.(i)),  
that for edge probabilities $p=p(n)$ satisfying $ 1/n\ll p 
<(1+\varepsilon)\sqrt{\log n/n}$  
the random RACG cannot be 
a.a.s.\ quasi-isometric to a RAAG, since $S(\Gamma)$ is a.a.s.\ not connected in that range. By contrast, it is an immediate 
consequence of our Theorem~\ref{theorem: connectivity}.(ii), that the 
same obstruction does not hold when 
$(1+\varepsilon)\sqrt{\log n/n}\leq p \leq 1- \omega(n^{-2})$.  
Accordingly, the following 
remains a very interesting open question: 

\begin{question}
	Given $p=p(n)$ satisfying 
	$(1+\epsilon)\sqrt{\log n/n}\leq p \leq 1- 
	\omega(n^{-2})$ and $\Gamma\sim \mathcal{G}_{n,p}$, is the random 
	right-angled Coxeter group $W_{\Gamma}$ a.a.s.\  virtually a RAAG? If not, is it at least 
	a.a.s.\ quasi-isometric to one?
\end{question}

\medskip

A powerful method in geometric group theory 
involves the study of actions of groups on non-positively curved
spaces. Such actions allow one to deduce important topological,
algebraic, and dynamical results about the group.  Given a
right-angled Coxeter group, its \emph{Davis complex} is a
non-positively curved cube complex on which it acts, see
Davis~\cite{Davis:book} and Gromov~\cite{Gromov:hyperbolic}.  Since
the action on the Davis complex is cocompact, it can be used to associate to any given
right-angled Coxeter group a cocompact cubulation.

Fioravanti, Levcovitz,
Sageev~\cite{FioravantiLevcovitzSageev:cubicalrigidity} recently
established a useful framework for determining the possible
cubulations of a given group.  Their work is phrased in terms of
\emph{medians}, whose definition we now give.  Given a triple of
vertices $x_1,x_2,x_3$ in the $1$--skeleton of a non-positively curved
cube complex, there exists a unique point $m=m(x_1, x_2,x_3)$ in the
$1$--skeleton which lies on the intersection of the geodesics between
$x_i$ and $x_j$, $i \neq j$, $i,j\in [3]$.  One can thus equip every
non-positively curved cube complex with a \emph{median operator}
$(x_1,x_2,x_3)\mapsto m(x_1, x_2, x_3)$ sending a triple of elements
in the $1$-skeleton to their median.

Given a group acting cocompactly on a cube complex, Fioravanti,
Levcovitz and Sageev define the \emph{cubical coarse median} to be the
pull-back of the median operator via any equivariant quasi-isometry.
Two cubulations induce the same \emph{coarse median structure} if
their cubical coarse medians are uniformly bounded distance apart.
This provides a definition of when two co-compact cubulations should
be considered equivalent.  We say that a group has \emph{coarse
cubical rigidity} if all its cubulations induce the same coarse median
structure.  In~\cite{FioravantiLevcovitzSageev:cubicalrigidity} the
authors prove that for each $n\geq 2$ the group $\mathbb{Z}^{n}$ has
countably many distinct cubical coarse medians, and thus in particular
it does not have coarse cubical rigidity.

Using the methods from the proof of Theorem~\ref{theorem:
connectivity}(ii) together with recent results of Fioravanti,
Levcovitz and Sageev, we obtain the following result on the cubical
coarse median structure of the random RACG:

\begin{theorem}[Coarse cubical rigidity]\label{theorem:uniquecoarsemedian}
	Let $\varepsilon>0$ be fixed and $p=p(n)$  satisfy 
	\begin{align*}
		(1+\epsilon)\sqrt{\frac{\log n}{n}}\leq p\leq 1-(1+\varepsilon)\frac{\log n}{n}.
		\end{align*} 
	Then, a.a.s.\ the right-angled Coxeter group $W_{\Gamma}$ with $\Gamma\sim \mathcal{G}_{n,p}$ has a unique cubical coarse median structure.
\end{theorem}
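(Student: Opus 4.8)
The plan is to deduce the result by combining Theorem~\ref{theorem: connectivity}(ii) with two further ingredients: the description of the quadratic-divergence range for random RACGs recalled in the introduction (from \cite{behrstock2022square}), and the coarse cubical rigidity criterion of Fioravanti, Levcovitz and Sageev~\cite{FioravantiLevcovitzSageev:cubicalrigidity}. In the form we need it, their criterion asserts that a right-angled Coxeter group $W_\Gamma$ has a unique cubical coarse median structure provided $\Gamma$ satisfies a short list of combinatorial conditions — essentially: $W_\Gamma$ is one-ended (i.e.\ $\Gamma$ is connected and has no separating clique); $W_\Gamma$ is thick of order exactly one and, in particular, is neither hyperbolic nor a non-trivial direct product (for a RACG this is governed by $\Gamma$ being CFS and not a join); and, crucially, the square graph $S(\Gamma)$ is connected. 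The last condition is what upgrades thickness to rigidity, by preventing a product subregion of $W_\Gamma$ from being recubulated independently of the rest of the group. It therefore suffices to show that $\Gamma\sim\mathcal{G}_{n,p}$ a.a.s.\ satisfies all of these conditions throughout the range $(1+\epsilon)\sqrt{\log n/n}\le p\le 1-(1+\epsilon)\log n/n$.

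Most of these conditions come essentially for free in this range. Since $(1+\epsilon)\sqrt{\log n/n}$ lies above the lower endpoint of the quadratic-divergence window, while $1-(1+\epsilon)\log n/n$ is exactly its upper endpoint, the result of \cite{behrstock2022square} quoted in the introduction gives that a.a.s.\ $W_\Gamma$ has quadratic divergence; hence a.a.s.\ $W_\Gamma$ is thick of order exactly one, and in particular one-ended, not hyperbolic, and not a non-trivial direct product. Equivalently, $\Gamma$ is a.a.s.\ connected and CFS and not a join, the last point because $\overline{\Gamma}\sim\mathcal{G}_{n,1-p}$ with $1-p\ge(1+\epsilon)\log n/n$ is a.a.s.\ connected — which is precisely where the upper bound on $p$ enters. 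Two further observations are standard: a.a.s.\ $\Gamma$ has no cone vertex (the probability that a fixed vertex is joined to all others is $O(n^{-(1+\epsilon)})$, so a union bound over vertices suffices), and a.a.s.\ every vertex of $\Gamma$ lies in an induced square (which for instance follows once $\Gamma$ is CFS and has no cone vertex; alternatively, the expected number of induced $4$--cycles through a given vertex is $\Theta\bigl(n^3p^4(1-p)^2\bigr)\to\infty$ throughout the range, with concentration from a routine second-moment estimate). Finally $1-(1+\epsilon)\log n/n\le 1-\omega(n^{-2})$, so Theorem~\ref{theorem: connectivity}(ii) applies and $S(\Gamma)$ is a.a.s.\ connected. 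Putting these facts together, $\Gamma$ a.a.s.\ satisfies every hypothesis of the Fioravanti--Levcovitz--Sageev criterion, and the theorem follows.

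The main obstacle is not probabilistic but lies in isolating exactly which hypotheses the Fioravanti--Levcovitz--Sageev framework requires and matching them to the combinatorics of $\Gamma$ and of $S(\Gamma)$ — in particular in confirming that connectivity of $S(\Gamma)$, which records the intersection pattern of the two-dimensional flats arising from the induced squares of $\Gamma$, is genuinely the relevant condition, rather than, say, connectivity of a graph built from larger product regions or a quantitative ``no small separation'' strengthening. This is the substance of what it means to use the \emph{methods} behind Theorem~\ref{theorem: connectivity}(ii) rather than merely its statement. Once the dictionary is in place, the probabilistic verifications above are routine given the machinery already developed in the paper.
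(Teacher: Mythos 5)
Your proposal has a genuine gap: it rests on a version of the Fioravanti--Levcovitz--Sageev criterion that is not the one the paper invokes. The result the paper actually uses is~\cite[Corollary~C]{FioravantiLevcovitzSageev:cubicalrigidity}, which states that if every induced square of $\Gamma$ is \emph{bonded} (meaning it shares exactly three vertices with some other induced square), then $W_{\Gamma}$ has coarse cubical rigidity. This is a purely local condition on each individual square. You instead posit a criterion built from one-endedness, thickness of order one, and connectivity of $S(\Gamma)$, and you yourself flag uncertainty about whether connectivity of $S(\Gamma)$ is ``genuinely the relevant condition.'' It is not. Connectivity of $S(\Gamma)$ and ``every square is bonded'' are incomparable: two squares sharing a diagonal (adjacent in $S(\Gamma)$) need not share three vertices, so $S(\Gamma)$ being connected does not force every square to be bonded; and conversely, every square could be bonded while $S(\Gamma)$ splits into several components. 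So even if each of the conditions you verify does hold a.a.s.\ in the stated range, they do not assemble into the hypothesis of the cited FLS corollary, and the deduction does not go through.

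The paper's actual proof is shorter and more local than what you attempt. It does not invoke Theorem~\ref{theorem: connectivity}(ii) as a black box, nor the quadratic-divergence window from~\cite{behrstock2022square}. Instead, for each $4$--set $S$, it computes directly the probability that $S$ induces a \emph{non-bonded} square: $\Gamma[S]$ must be a square (probability $3p^4(1-p)^2$), and every $v\notin S$ that is adjacent to both endpoints of a diagonal must in fact be adjacent to all of $S$ (probability $1-2p^2(1-p^2)$, independently over $v$). This gives $\mathbb{E}X_S=3p^4(1-p)^2\bigl(1-2p^2(1-p^2)\bigr)^{n-4}$, which the hypothesis $np^2(1-p)\geq(1+\varepsilon+o(1))\log n$ forces to be $o(n^{-4})$; a union bound over the $O(n^4)$ choices of $S$ then shows that a.a.s.\ every square is bonded, and Corollary~C finishes the argument. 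This is what the introduction means by ``methods from the proof of Theorem~\ref{theorem: connectivity}(ii)'' --- a first-moment, union-bound style estimate in the same spirit --- rather than an application of the theorem's statement. To repair your proposal you would need either to replace your guessed criterion with the actual bondedness criterion and carry out the corresponding first-moment computation, or to supply a proof (which the FLS paper does not provide, and which the structure of the two conditions makes doubtful) that connectivity of $S(\Gamma)$ together with your other hypotheses implies bondedness of every square.
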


We note that some upper bound on $p=p(n)$ in
Theorem~\ref{theorem:uniquecoarsemedian} is necessary. 
  Indeed for
$1-p=\theta (n^{-2})$ the right-angled Coxeter group $W_{\Gamma}$ with
$\Gamma\sim \mathcal{G}_{n,p}$ is a.a.s.\ virtually Abelian, as shown
by Behrstock, Hagen and Sisto~\cite[Theorem
V]{BehrstockHagenSisto:coxeter}.  For these values of $p$, it thus
follows that $W_{\Gamma}$ is either a finite group, or an infinite
dihedral group (both of which have unique coarse median structures),
or it has more than one distinct cubical coarse median structures by a
result of Fioravanti, Levcovitz and
Sageev~\cite[Proposition~3.11]{FioravantiLevcovitzSageev:cubicalrigidity},
and all three alternatives occur with probability bounded away from
$0$ by~\cite[Theorem V]{BehrstockHagenSisto:coxeter}.

As for the lower bound on $p=p(n)$, results of
Susse~\cite[Corollary~3.8]{susse2023morse} imply that in the case $n^{-1}\ll p
\ll (1-\epsilon)\sqrt{\frac{\log n}{n}}$ the square graph associated 
to $\Gamma\sim
\mathcal{G}_{n,p}$ will a.a.s.\ contain non-bonded induced squares
(see Section~\ref{section: geometry of RACG} for a definition), which
Fioravanti, Levcovitz and Sageev suggest 
should imply the existence of several distinct cubical coarse median
structures (see the discussion
after~\cite[Corollary C]{FioravantiLevcovitzSageev:cubicalrigidity}). 
If their speculation is correct, then this would
indicate the lower bound on $p(n)$ in
Theorem~\ref{theorem:uniquecoarsemedian} is optimal.

The work of Fioravanti, Levcovitz and Sageev gave the first examples
of non-hyperbolic groups with coarse cubical rigidity.
Theorem~\ref{theorem:uniquecoarsemedian}, combined with
\cite[Corollary C]{FioravantiLevcovitzSageev:cubicalrigidity} and
\cite[Theorem 1.1]{Behrstock2024thickness}, implies that coarse
cubical rigidity is in fact very common in RACGs, being a.a.s.\
satisfied by the random RACG $W_{\Gamma}$, $\Gamma\sim
\mathcal{G}_{n,p}$, for $p=p(n)\in [(1+\epsilon)\sqrt{\log n/n},1-
(1+\varepsilon)\log n/n]$, and in particular for $p=1/2$.  It follows
that for almost all $n$-vertex graphs $\Gamma$, the associated RACG
$W_{\Gamma}$ is non-hyperbolic and has coarse cubical rigidity.

\medskip

A number of questions concerning component evolution
and diameter in the square graphs of random graphs were raised in \cite[Section
7]{behrstock2022square}.  In Theorem~\ref{theorem: second
component} and Proposition~\ref{prop: large p connectivity}, we shed
light on several of those questions.  
For instance, as a key step in
the proof of Theorem~\ref{theorem: connectivity}, we establish an a.a.s.\
upper bound on the size of the second largest component in the
auxiliary graph $T_1(\Gamma)$ for $p$ just above the threshold for the
emergence of the giant square component. This is given by the 
following result which resolves the  
questions about component evolution for $p$ in the corresponding range.

\begin{theorem}\label{theorem: second component}
Let $\Gamma\sim \mathcal{G}_{n,p}$. Then there exists a constant $C>0$ such that if 
\[C\sqrt{\frac{\log \log n}{n}}\leq p(n) \leq 1-\omega(n^{-2}),\]
then a.a.s.\ there exists a unique giant component in $T_1(\Gamma)$ of size $\Omega(n^2)$, with all other components having size $O\left(\left(\frac{\log n}{\log \log n}\right)^2\right)$.
\end{theorem}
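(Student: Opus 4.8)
The plan is to analyze the auxiliary graph $T_1(\Gamma)$ directly via a two-round exposure argument, separating the question of giant-component emergence from the control on the second-largest component. First I would recall (or reprove using a subgraph/sprinkling comparison) that for $p \geq C\sqrt{(\log\log n)/n}$ the expected number of induced squares through a typical non-edge, and more importantly the expected number of $T_1$-neighbours of a typical square-vertex, is growing like a positive power of $n$; this puts us well into the supercritical regime, so a standard branching-process heuristic predicts a unique giant component of size $\Omega(n^2)$. To make this rigorous I would run a breadth-first exploration of the component of a fixed square in $T_1(\Gamma)$, coupling it with a Galton–Watson process whose offspring distribution is (after conditioning on the edges exposed so far) stochastically close to Binomial with mean $\gg 1$; the key inputs are concentration of the number of squares/common-diagonal pairs, which follow from standard second-moment and Chernoff/Azuma estimates once one handles the edge-dependencies inherent to $T_1(\Gamma)$. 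Uniqueness of the giant then follows from a sprinkling argument: any two linear-in-$n^2$ sized components would, after revealing a small independent fraction of edges, almost surely be joined.

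The heart of the theorem is the second assertion, that all non-giant components have size $O\bigl((\log n/\log\log n)^2\bigr)$. Here the strategy is a first-moment (union bound) argument over potential small components. A component of $T_1(\Gamma)$ of size $s$ corresponds to a connected set of $s$ induced squares in $\Gamma$ whose mutual diagonals form a connected structure and which is ``isolated'': no square outside the set shares a diagonal with one inside. The number of vertices of $\Gamma$ spanned by $s$ connected squares is $O(\sqrt{s})$ in the most efficient (densely overlapping) configurations and at most $O(s)$ in general; for each such configuration I would bound the probability that it appears as an induced structure and, crucially, that it is separated from the rest of $\Gamma$ — the separation event forces that for every vertex set that could extend one of the squares to a new square sharing a diagonal, at least one required edge is absent or one forbidden edge is present. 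Since each of the $\Theta(n)$ candidate extending vertices independently fails to create a new adjacent square with probability $1 - p^{\Theta(1)} = 1 - n^{-\Omega(1)} \cdot(\text{polylog})$, the isolation probability is roughly $\exp(-\Theta(n \cdot p^{\Theta(1)}))$, which for $p \geq C\sqrt{(\log\log n)/n}$ is $\exp(-\Theta((\log\log n)^{\Theta(1)}))$ — wait, that is not small enough on its own, so one must instead track the cumulative isolation over all $\Theta(\sqrt{s}\, n)$ potential diagonal-extensions across the whole component, giving isolation probability $n^{-\Omega(\sqrt{s})}$ up to polylog factors, which beats the $n^{O(\sqrt{s})}$ count of configurations precisely when $s$ exceeds $(\log n/\log\log n)^2$ by a large constant factor. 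Balancing the entropy (number of ways to embed the configuration, $\sim n^{O(\sqrt{s})}$) against this isolation cost pins down the threshold size stated.

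I would organize the write-up as: (a) a lemma establishing the supercritical branching comparison and existence of a component of size $\Omega(n^2)$; (b) a lemma, proved by sprinkling, giving uniqueness; (c) the main lemma bounding $\mathbb{P}(\exists$ component of size $s$ with $s_0 \leq s \leq \delta n^2)$ by summing over $s$ the product of an entropy term and an isolation term, where $s_0 = K(\log n/\log\log n)^2$; and (d) assembling these, noting that (a)–(b) already exclude components in the range $[\delta n^2, \Omega(n^2)]$ by absorbing them into the giant. The main obstacle, and the place requiring genuine care rather than routine estimation, is item (c): correctly counting isolated connected square-configurations while accounting for the strong correlations among $T_1(\Gamma)$-edges that share vertices of $\Gamma$, and in particular verifying that the ``most dangerous'' configurations — those that are vertex-efficient, using only $O(\sqrt{s})$ vertices of $\Gamma$ and hence cheapest to embed — still have isolation cost large enough to win. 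I expect this to hinge on a careful case analysis of how $s$ squares can overlap, analogous to but more delicate than the square-counting arguments in \cite{behrstock2022square}, and on choosing the right notion of ``boundary'' of a square-component to localize the independence used in the isolation estimate.
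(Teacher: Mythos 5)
Your proposal takes a genuinely different route for the key step, and the route you propose fails in the lower part of the range of $p$. Let me focus on item (c), the first-moment bound over ``isolated'' small components, which is where your approach diverges most sharply from the paper's and where it breaks down. Consider a candidate non-giant component whose support has size $m$. The number of $m$-sets you must union-bound over is $\binom{n}{m}\approx \exp(m\log n)$, while the isolation cost, as you note, comes from the event that none of the $\sim n$ outside vertices extends any of the $\Theta(m)$ diagonals in the component: by a Janson-type estimate this is roughly $\exp(-\Theta(m\,p^2 n))$. Your sketch asserts the isolation probability is ``$n^{-\Omega(\sqrt s)}$ up to polylog factors,'' but the exponent is $\Theta(m\,p^2 n)$, not $\Theta(m\log n)$; for $p$ near the lower end $C\sqrt{(\log\log n)/n}$ this is only $\Theta(m\log\log n)$, which is far too weak to overcome the entropy $\exp(m\log n)$. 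Indeed the expected number of such candidate isolated configurations is $\exp\bigl(m(\log n - \Theta(\log\log n))\bigr)$, which is exponentially large; the first moment gives no control at all. (A direct first-moment calculation of this type does appear in the paper, but only in the proof of Theorem~\ref{theorem: connectivity}(ii), where the stronger hypothesis $p\geq(1+\varepsilon)\sqrt{(\log n)/n}$ makes the exponent $\Theta(m\log n)$ and the union bound goes through.)

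The paper avoids this obstacle entirely by a different mechanism: it never tries to show small components are ``isolated'' from the rest of $\Gamma$. Instead it partitions $V(\Gamma)$ into $2M^2$ parts of equal size with $M\sim\log n/(8\log\log n)$, and observes (via~\cite[Lemma 5.5]{behrstock2018global} together with Lemma~\ref{lemma: bound on number of diag in order 1 component}) that any component of $T_1(\Gamma)$ of size at least $\binom{2M}{2}$ must contain a sub-component of size $\geq M$ with full support on some $2M$-set $X$, and hence living inside $\Gamma[V_S]$ for some $S\in[2M^2]^{(2M)}$. For each such $S$, one then uses the giant component living in $\Gamma[V\setminus V_S]$ (whose existence comes from Corollary~\ref{cor: giant exists}) and the giant-connecting lemma (Proposition~\ref{prop: connecting lemma}, a Janson-inequality argument) to show the size-$\geq M$ sub-component joins the giant with failure probability small enough to beat a union bound over only $\binom{2M^2}{2M}=n^{o(1)}$ choices of $S$ --- a vastly smaller entropy than the $\binom{n}{m}$ you would face. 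In other words, the paper's argument replaces ``small components must be expensive to isolate'' with ``moderate components must have a small core, and a small core connects to a pre-existing giant,'' and the choice $M\sim\log n/\log\log n$ is exactly what makes the $\binom{2M^2}{2M}$ union bound compatible with the $O(n^{-1})$-level failure probabilities available from Propositions~\ref{prop: many edges in large components}, \ref{prop: p large-ish, all square components are large-ish} and \ref{prop: connecting lemma}. Your parts (a)--(b) are closer in spirit to what the paper does (sprinkling for existence and uniqueness of the giant, via Propositions~\ref{prop: sprinkling lemma} and~\ref{prop: connecting lemma}), but you would need to replace (c) wholesale with something like the paper's partitioning argument to cover $p$ down to $C\sqrt{(\log\log n)/n}$.
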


We conjecture that already at the time of the emergence of a giant
square component, the second largest square component should have
support of only polylogarithmic order.  
\begin{conjecture}\label{conjecture: second component}
Let $\Gamma\sim \mathcal{G}_{n,p}$ and let $\varepsilon>0$ be fixed. Suppose $p=p(n)\geq 	\frac{\sqrt{\sqrt{6}-2} +\varepsilon}{\sqrt{n}}$.
Then a.a.s.\ all connected components of $T_1(\Gamma)$ except the largest one have size $(\log n)^{O(1)}$.	
\end{conjecture}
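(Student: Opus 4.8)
A natural route to Conjecture~\ref{conjecture: second component} is to establish, throughout the supercritical regime $\lambda := np^2 > \sqrt6-2+\eps$, two statements: (a) $T_1(\Gamma)$ has a \emph{unique} component of size $\geq \beta n^2$ for some $\beta = \beta(\lambda) > 0$; and (b) a.a.s.\ $T_1(\Gamma)$ has no component whose support has size in the interval $[(\log n)^{C_0},\, \beta n^2)$, for a suitable constant $C_0 = C_0(\eps)$. (There is no upper bound on $p$ in the conjecture, but for $p \geq \sqrt{2+\eps}\sqrt{\log n/n}$ the graph $T_1(\Gamma)$ is connected by Theorem~\ref{theorem: connectivity}(iv) and nothing is required; the substance is the range from the square-percolation threshold up to that point.) Granting (a) and (b), the unique giant is the only component of size at least $(\log n)^{C_0}$, so every other component has size $(\log n)^{O(1)}$. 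Part (a) should follow by combining the branching-process analysis of~\cite{behrstock2022square} --- which pins down $\sqrt{\sqrt6-2}/\sqrt n$ as the emergence threshold --- with a sprinkling argument: reveal $\Gamma$ at density $(1-\delta)p$, extract the giant square component, and show that the remaining $\delta$-fraction of potential edges a.a.s.\ absorbs any other component of size $\omega((\log n)^{C_0})$ into it.

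The heart of the matter is part (b), which I would attack by a first-moment bound over connected \emph{square-clusters} of $\Gamma$. A square-cluster is a connected subgraph $F \subseteq T_1(\Gamma)$: a set $\mathcal D$ of $k := |V(F)|$ diagonals (non-edges of $\Gamma$) together with a family of induced $4$-cycles realising the edges of $F$. Such a cluster is supported on at most $2k$ vertices of $\Gamma$, and in the extremal tree-like case it forces $4k + O(1)$ edges and $k + O(1)$ non-edges. One bounds the expected number of square-clusters of size $k$ that form a \emph{maximal} component of $T_1(\Gamma)$ by
\[\binom{n}{\leq 2k} \cdot (\text{shape count}) \cdot p^{\Theta(k)}(1-p)^{\Theta(k)} \cdot (\text{isolation probability}),\]
where the shape count --- Cayley-type trees on the $k$ diagonals, times the pairing of the support into diagonals --- is $k^{O(k)}$, and the isolation probability is that no further induced $4$-cycle of $\Gamma$ meets a diagonal of $\mathcal D$. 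The latter event forbids, per diagonal, roughly $\tfrac12 n^2 p^4 = \tfrac12\lambda^2$ would-be extending squares (and fixes the common $\Gamma$-neighbourhoods of each diagonal's endpoints), contributing a factor $e^{-\Theta(\lambda^2 k)}$. Once the powers of $n$ cancel up to an $n^{O(1)}$ prefactor, the $k$-dependent part takes the form $g(\lambda)^k$ for an explicit $g$, and $\sum_{k \geq (\log n)^{C_0}} n^{O(1)} g(\lambda)^k = o(1)$ rules out medium components --- provided $g(\lambda) < 1$ and $C_0$ is large. The sub-range $k \geq \beta n^2$ is disposed of, via the vertex bound $|U|\leq 2k$, by the observation that such a large cluster is forced to be dense (vertices heavily repeated), hence improbable, or else is absorbed into the uniqueness step~(a).

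The main obstacle is entirely in the regime where $\lambda$ is a bounded constant just above $\sqrt6-2$; for $\lambda \to \infty$ the factor $e^{-\Theta(\lambda^2 k)}$ is generous, and indeed Theorem~\ref{theorem: second component} already covers $\lambda \geq C\log\log n$. Two difficulties arise. First, one must prove $g(\lambda) < 1$ all the way down to $\lambda = \sqrt6-2+\eps$: since the isolation decay rate is only $\Theta(\lambda^2)$, and hence $O(1)$-small near the threshold, one is trying to beat the cluster entropy --- Cayley's $k^{O(k)}$ together with the Stirling factors from pairing the support into diagonals --- by a margin that degenerates to zero exactly at $\lambda = \sqrt6-2$. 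This is precisely where the sharp constant $\sqrt6-2$ must re-emerge; a crude product bound will almost certainly lose too much, and I expect one has to re-run the branching-process computation of~\cite{behrstock2022square}, this time as a \emph{sub-criticality} statement about a dual exploration process governing cluster growth rather than as a first-moment estimate. Second, $T_1(\Gamma)$ carries strong dependencies --- distinct induced $4$-cycles routinely share vertices or edges of $\Gamma$ --- so a clean product estimate is unavailable; in particular ``vertex-economical'' clusters, such as a \emph{book} of squares $\{a,b_i,c,d\}$ through one common diagonal $\{a,c\}$ (supported on only $k+O(1)$ vertices and forcing only $2k+O(1)$ edges), are \emph{a priori} cheaper than tree-like clusters and must be handled separately --- their count being tamed instead by a more restrictive isolation constraint, since maximality forces certain induced structure in $\Gamma$ (e.g.\ a clique among the $b_i$, and exactly-prescribed common neighbourhoods) that is itself very costly. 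Reconciling all cluster shapes under a single bound $g(\lambda) < 1$, and matching its decay rate to the known emergence constant, is where I expect the genuine work to lie; a successful execution should moreover sharpen the conclusion to the bound $O(\log n)$ on the second-largest component throughout the supercritical range.
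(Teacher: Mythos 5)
This statement is explicitly labelled a \emph{conjecture} in the paper and is not proved there, so there is no author argument for you to be compared against; the paper only establishes the weaker Theorem~\ref{theorem: second component}, which requires $p\geq C\sqrt{\log\log n/n}$. Your proposal should therefore be judged on its own as an attempted proof, and on that standard it is a candid research plan rather than a proof: you yourself flag the central missing pieces, and I agree they are genuine gaps rather than routine bookkeeping.

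Concretely, the first-moment scheme you outline depends on establishing $g(\lambda)<1$ uniformly for $\lambda \geq \sqrt{6}-2+\eps$, and you give no argument that this holds; near the threshold the isolation penalty per diagonal, $e^{-\Theta(\lambda^2)}$, is a constant close to $1$, and it is far from clear that it dominates the cluster entropy in the way your sketch assumes. More fundamentally, the entire ansatz that an isolated medium cluster forces ``one independent $e^{-\Theta(\lambda^2)}$ penalty per diagonal'' is not justified: the forbidden extending squares for distinct diagonals overlap heavily (especially for book-type clusters, as you note), so a product bound over diagonals is unavailable, and your proposed fix --- rerunning the branching process as a sub-criticality/dual-exploration statement --- is named but not carried out. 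The same applies to your step (a): uniqueness of the $\Omega(n^2)$-component all the way down to $\lambda=\sqrt{6}-2+\eps$ is not established in the paper (Proposition~\ref{prop: many edges in large components} and the sprinkling lemma yield existence but not uniqueness so close to the threshold), and your one-sentence sprinkling remark does not supply it. In short: the outline is sensible and correctly locates where the difficulty lies, but it does not constitute a proof, and you acknowledge as much; the conjecture remains open.
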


\subsection{Organization of the paper}
In Section~\ref{section:
preliminaries} we recall some background on graph theory,
algebraic thickness, and a number of probabilistic tools we
shall require for our arguments.
In Section~\ref{section: main} we begin with an outline of the 
proof of Theorem~\ref{theorem: connectivity}. We then proceed 
to the existence of a unique
giant component in square percolation (Section~\ref{section:
uniqueness of the giant}), the connectivity threshold for the random
square graph (Section~\ref{section: connectivity}) and finally the
application of our work to the geometry of the random RACG
(Section~\ref{section: geometry of RACG}).  We end the paper in
Section~\ref{section: concluding remarks}, with some concluding
remarks and open questions related to our work that would be of
interest for further exploration.
 
\section{Preliminaries and definitions}\label{section: preliminaries}

\subsection{Graph theoretic notions and notation}
We recall here some standard graph theoretic notions and notation 
that we will use throughout the paper. We write $[n]:=\{1,2, \ldots, n\}$, and $[m,n]:=\{m,m+1,\ldots,n\}$ for $n,m\in \mathbb{Z}$ and $x_1x_2\ldots x_r$ to denote the $r$--set $\{x_1, x_2, \ldots, x_r\}$. Given a set $S$, we let $S^{(r)}$ denote the collection of all subsets of $S$ of size $r$. A graph is a pair $\Gamma=(V,E)$, where $V=V(\Gamma)$ is a set of vertices and $E=E(\Gamma)$ is a subset of $V^{(2)}$. All graphs considered in this paper are simple graphs, with no loops or multiple edges.

A subgraph of $\Gamma$ is a graph $\Gamma'$ with $V(\Gamma')\subseteq V(\Gamma)$ and $E(\Gamma')\subseteq E(\Gamma)$. We say $\Gamma'$ is the subgraph of $\Gamma$ induced by a set of vertices $S$ if $\Gamma' =(S, S^{(2)}\cap E(\Gamma))$, and denote this fact by writing $\Gamma'=\Gamma[S]$.  The complement of $\Gamma$ is the graph $\overline{\Gamma}:=(V, V^{(2)}\setminus E)$. The neighbourhood of a vertex $x$ in $\Gamma$ is the set $N_{\Gamma}(x):=\{y\in V(\Gamma): \  xy \in E(\Gamma)\}$.

A path of length $\ell\geq 0$ in $\Gamma$ is a sequence of distinct vertices $v_0, v_1, \ldots, v_{\ell}$ with $v_iv_{i+1}\in E(\Gamma)$ for all $i\in [\ell-1]\cup\{0\}$. The vertices $v_0$ and $v_{\ell}$ are called the endpoints of the path. Two vertices are said to be connected in $\Gamma$ if they are the endpoints of some path of finite length. Being connected in $\Gamma$ is an equivalence relation on the vertices of $\Gamma$, whose equivalence classes form the connected components of $\Gamma$.  If there is a unique connected component, then the graph $\Gamma$ is said to be connected. In such a case, the diameter of $\Gamma$ is the least $\ell$ such that any pair of vertices in $\Gamma$ are connected by a path of length at most $\ell$.

A minimally connected subgraph of a connected graph is called a spanning tree. A cycle of length $\ell$ in $\Gamma$ is a sequence of distinct vertices $v_1, v_2, \ldots ,v_{\ell}$ with $v_iv_{i+1}\in E(\Gamma)$ for all $i \in [\ell]$, where $v_{\ell+1}$ is identified with $v_1$. A forest is an acyclic graph, i.e.,  a graph containing no cycle. An independent set in a graph $\Gamma$ is a set of vertices $I\subseteq V(\Gamma)$ such that the subgraph of $\Gamma$ induced by $I$ is the empty graph, that is to say $E(\Gamma)\cap I^{(2)}=\emptyset$. Conversely, a clique in $\Gamma$ is a set of vertices $K\subseteq V(\Gamma)$ such that the induced subgraph is complete, that is to say $ K^{(2)}\subseteq E(\Gamma)$; we refer to $m:=\vert K\vert$ as the order of the clique $K$, and call $K$ an $m$--clique.

\subsection{The square graph}
The square graph $S(\Gamma)$ was introduced by Behrstock, Falgas--Ravry, Hagen and Susse in~\cite{behrstock2018global} as a way of capturing the algebraic notion of thickness of order 1 and the geometric notion of quadratic divergence in graph-theoretic terms, and was denoted $\square(\Gamma)$ in that paper. A related notion for triangle-free graphs having been previously investigated by Dani and Thomas~\cite{DaniThomas:divcox}, and $S(\Gamma)$ has been subsequently investigated by Susse~\cite{susse2023morse} in connection to the existence of Morse subgroups in random right-angled Coxeter groups.

Combinatorially, it turns out to be more convenient to analyse the
closely related graph $T_1(\Gamma)$, which admits $S(\Gamma)$ as its
line graph ($S(\Gamma)$ being, for its part, better suited to some of
the applications to geometric group theory).  This latter graph was
introduced by Behrstock, Falgas--Ravry and Susse
in~\cite{behrstock2022square}, where confusingly it was also referred
to as `the square graph' and denoted by $\square(\Gamma)$; we hope to
resolve this unfortunate ambiguity in this paper by using the notation
$S(\Gamma)$ and $T_1(\Gamma)$.  Generalizations $T_k(\Gamma)$, $k\in
\mathbb{Z}_{\geq 0}$ of $T_1(\Gamma)$ were used by the authors
in~\cite{Behrstock2024thickness} to study higher orders of algebraic
thickness and divergence, with an inductive definition (which is not 
relevant to the present paper).  For our
purposes however we provide an equivalent definition of 
$T_1(\Gamma)$ in terms of the induced squares in $\Gamma$ and their
pairwise interactions.

\begin{definition} [$T_1(\Gamma)$: the square graph]\label{def:square graph} Given a
graph $\Gamma$, the \emph{square graph of $\Gamma$}, denoted by
$T_1(\Gamma)$, is defined as follows.  The vertex set of $T_1(\Gamma)$
is the collection $E(\overline{\Gamma})$ of non-edges of $\Gamma$.
The edges of $T_1(\Gamma)$ consist of those pairs of non-edges $f,
f'\in E(\overline{\Gamma})$ such that the $4$--set of vertices $f\cup f'$  induces a
\emph{square} (i.e.,  a $4$--cycle) in $\Gamma$.

	We refer to connected components of $T_1(\Gamma)$ as \emph{square components} of $\Gamma$.  Given such a component $C$, we define its support to be
	$\mathrm{supp}(C):=\bigcup_{f\in C}f$, the collection of
	vertices in $V(\Gamma)$ belonging to some $f\in C$. Whenever, $\mathrm{supp}(C)=V(\Gamma)$, we say the component has full support. 
\end{definition}
\begin{remark}[The other square graph: $S(\Gamma)$]
	As mentioned above, for certain applications in geometric group theory, it
is natural to work with another auxiliary graph closely related to (but distinct from) the square graph $T_1(\Gamma)$ namely the line graph of $T_1(\Gamma)$, denoted by $S(\Gamma)$.
This latter graph has the induced squares of $\Gamma$ as its vertices, and as its edges those pairs of induced squares having a diagonal in common.  

As noted in~\cite[Remark~1.2]{behrstock2022square} both versions of a square graph carry
essentially the same information, so working with one rather than the other
is primarily a question of what is convenient for the application one
has in mind.  
\end{remark}
\noindent  The study of the properties of $S(\Gamma)$ and $T_1(\Gamma)$ when $\Gamma$ is a random graph is known as \emph{square percolation}, by analogy with the  \emph{clique percolation} model introduced by Der\'enyi, Palla and Vicsek~\cite{DerenyiPallaVicsek} in 2005, which has received extensive attention from researchers in network science.

Connectivity in the square graph is a delicate matter.  In the next
two subsections, we highlight two features of the square graph that
our arguments will have to contend with.  The first is that
one may have two distinct square components with exactly the same
support (something which makes the study of connectivity considerably
more challenging); the second is that connectivity in the square graph
of $\Gamma$ is not monotonic with respect to
the addition or removal of edges in $\Gamma$.  Indeed, adding or
removing an edge to a graph can cause the associated square graph to
change from being connected to being disconnected or on the contrary
from disconnected to connected.

\subsubsection{Coexistence of distinct square components with the same support}
\label{subsection: distinct but same support}
	We show that it is possible for $T_1(\Gamma)$ to have a somewhat
	surprising property: the existence of more than one component with
	full support.  We illustrate this with a concrete example (from
	which many similar ones can be built by varying the relevant
	parameters, including ones with other numbers of components of 
	full support).
	\begin{proposition}
		There exist graphs $\Gamma$ such that $T_1(\Gamma)$ contains
		two distinct components each having full support in
		$\Gamma$.
	\end{proposition}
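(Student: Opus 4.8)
The plan is to construct an explicit graph $\Gamma$ by hand, exhibiting two families of induced squares whose non-edges form two connected subgraphs of $T_1(\Gamma)$, with the crucial feature that no single non-edge is shared and no square bridges the two families, yet together the supports exhaust $V(\Gamma)$. The natural way to force this is to take two disjoint ``gadgets'' $A$ and $B$, each internally rich in induced $4$--cycles so that it yields a connected square component with support equal to its own vertex set, and then add edges between $A$ and $B$ carefully so that (a) every vertex of $A\cup B$ is still used by some induced square, and (b) the cross-edges between $A$ and $B$ destroy any induced $4$--cycle that would have straddled the two parts — for instance by making the bipartite graph between $A$ and $B$ complete, so that any $4$--set meeting both $A$ and $B$ contains at least one cross-edge in an ``unwanted'' position and hence fails to induce a $4$--cycle, or induces one that does not link the components.

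First I would fix a concrete small gadget: a good candidate is the complete bipartite graph $K_{3,3}$ (or $K_{2,3}$), whose induced $4$--cycles are numerous and whose square graph is easily checked to be connected with full support on the six (resp. five) vertices — every vertex lies in some induced $C_4$, and any two induced $C_4$'s can be connected through a short path in $T_1$ since they share the non-edges living inside each side of the bipartition. I would take $\Gamma$ to be two vertex-disjoint copies $A$ and $B$ of this gadget, together with all edges between $V(A)$ and $V(B)$. Then I need to verify three things: (1) inside $A$ alone the induced squares give a connected square component $C_A$ with $\mathrm{supp}(C_A)=V(A)$, and likewise $C_B$ with $\mathrm{supp}(C_B)=V(B)$; (2) no induced square of $\Gamma$ uses vertices from both $A$ and $B$ — this follows because any such $4$--set has all its $A$--$B$ pairs present as edges, so it is not an induced $4$--cycle (a $C_4$ on $\{a,a',b,b'\}$ with $a,a'\in A$, $b,b'\in B$ would need two of the four $A$--$B$ pairs to be non-edges); the remaining cases with three vertices on one side similarly fail; (3) therefore $C_A$ and $C_B$ remain distinct components of $T_1(\Gamma)$, each still with the same (now non-full relative to $\Gamma$!) support — so I must be slightly more careful: to get \emph{full} support I instead want $C_A\cup C_B$ between them to cover all of $V(\Gamma)$, which they do, but as a single component they are not full; what I actually want is two components each of full support.

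To get each component to have \emph{full} support I will enrich the construction: rather than two disjoint gadgets, use a gadget on $V(\Gamma)$ with two ``overlaid'' square structures. Concretely, partition $V(\Gamma)$ into $X\sqcup Y$ with $|X|=|Y|=m$ and build $\Gamma$ so that it contains \emph{both} a $C_4$-rich structure $\mathcal{F}_1$ whose non-edges all lie within a pattern that sweeps across all of $X\sqcup Y$ and is connected in $T_1$, \emph{and} a second such structure $\mathcal{F}_2$ using a different set of non-edges, with $\mathcal{F}_1$ and $\mathcal{F}_2$ sharing no non-edge and no induced square linking them; a clean way is to take two edge-disjoint ``crown''-type bipartite-like patterns between $X$ and $Y$ (e.g. two edge-disjoint perfect matchings' complements, or a bipartite graph and a shifted copy) each of which individually is a union of induced $C_4$'s spanning all vertices. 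The hard part will be exactly this: arranging that adding the edges needed for structure $\mathcal{F}_1$ does not accidentally create cross-squares merging it with $\mathcal{F}_2$, and vice versa — i.e. controlling which $4$--sets induce $C_4$'s once all the edges of both patterns are present simultaneously. I would handle this by choosing the two patterns to be ``edge-disjoint circulant-type'' graphs on $2m$ vertices (say, for suitable $m$, taking $\Gamma$ to be a Cayley graph of $\mathbb{Z}_{2m}$ with a connection set chosen so that its induced $4$--cycles split into exactly two $T_1$-components), verify the split and the full support by a finite case check for the smallest working $m$, and present that single explicit $\Gamma$ as the witness. The remark in the statement that ``many similar ones can be built by varying the relevant parameters'' is then justified by noting the construction is robust under replacing $\mathbb{Z}_{2m}$ by larger groups or taking blown-up copies of the gadget.
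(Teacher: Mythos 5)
Your instinct to start from two disjoint gadgets with a complete bipartite join is natural, but the reasoning you give for why it fails is incorrect, and this matters. You claim that with all $A$--$B$ edges present, no induced $4$--cycle can straddle the two parts; in fact the opposite happens. Take $a,a'\in A$ with $aa'\notin E$ and $b,b'\in B$ with $bb'\notin E$: since all four cross pairs $ab,ab',a'b,a'b'$ are edges, the $4$--set $\{a,a',b,b'\}$ induces exactly $K_4$ minus the perfect matching $\{aa',bb'\}$, which is a $C_4$ with diagonals $aa'$ and $bb'$. So rather than producing two disjoint components of non-full support, the complete join \emph{merges} $C_A$ and $C_B$ into a single component. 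Your conclusion (that the construction must be enriched) is right, but for the wrong reason, and the error would resurface in any attempt to control cross-squares in the second construction.

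Your second strategy — two overlaid circulant-type families of induced squares on a bipartitioned vertex set, with disjoint diagonal sets — is in spirit the same as the paper's, which uses a $22$-vertex graph with a shifted-offset pattern between the two ``rows.'' But the proposal stops exactly where the work begins. For an existence statement of this kind, the proof \emph{is} the explicit example together with its verification, and you give neither: no vertex set size, no explicit connection sets, no argument that the resulting families of diagonals are disjoint, and no argument that no unforeseen square merges them. You defer all of this to ``a finite case check for the smallest working $m$'' that is never carried out. This is a genuine gap, not a stylistic one, because the details are delicate: the paper deliberately \emph{adds} extra intra-row edges precisely in order to \emph{destroy} certain induced squares that a naive circulant would contain and that would otherwise merge the two components, and it chooses the shift (by $6$ modulo $11$) so that the offset classes $\{0,\pm 2\}$ and $\{6,6\pm 2\}$ are disjoint modulo $11$. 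Without specifying and checking such parameters, there is no proof — only a plausible-sounding plan that, as your own first attempt illustrates, can easily fail in ways that casual inspection misses.
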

	\begin{proof}
	The idea in building such an example is
	to begin with a graph with a square component of full support 
	which had many non-edges which are not the `diagonal'  of any 
	induced square. Then, one can add some additional edges which do not 
	make any of the original squares non-induced, but create new induced squares which can be woven together to form a new component of the square graph which is also of full 
	support. Moreover, with sufficient care, one can do this in a way that ensures that 
	each `diagonal' of a new square is not in 
	the link of the `diagonal' of an old square (or vice versa), 
	so that the resulting two square components are distinct, despite both being supported on the entire vertex set.

	We begin with a graph $G'$ (see Figure~1 below) 
	which is the concatenation of a
	sequence of squares along their diagonals together with 
	some extra edges to make some of the squares in the 
	concatenation non-induced. 
	By producing a graph with fewer induced squares, these extra edges will make it easier later on in the construction to 
	ensure that the second square component with full support we shall build does not merge with the square component in $G'$.
	We name the vertices and the edges of the graph $G'$ as follows:
 \begin{align*}
	 V(G')=&\{v_i:\; \text{for} \; i \in [0,21] \};\\
	 E(G')=&\{v_iv_{i+10},v_iv_{i+12}: i \in [9]\} \cup \{v_0v_{12},v_0v_{21},v_{10}v_{11},v_{10}v_{20}\} \\
	 &\bigcup \{v_iv_{i+1}: i \in [0,9] \}\cup \{v_0v_{10}\}\cup \{v_iv_{i+1}: i\in [11,20]\}\cup \{v_{11}v_{21}\}\\
	 &\bigcup  \{v_iv_{i+2}: i \in [0,8] \}\cup \{v_0v_{9},v_1v_{10}\}\cup \{v_iv_{i+2}: i\in [11,19]\}\cup \{v_{11}v_{20},v_{12}v_{21}\}
\end{align*}
The set on the first line in the definition of $E(G')$ consists of the edges that connect the top and bottom
of the ladder and while the sets on the second and third lines connect pairs of indices
that differ by one and two inside the top layer of the ladder and 
similarly in the
bottom layer (the third set  being used to
decreased the number of induced squares).  We draw the graph $G'$ below, the
vertices $v_0,v_{11}$ in the beginning and the end of the figure are
the same vertices drawn twice to make it easier to see the cyclic
symmetry of the graph  thus constructed.

\begin{figure}[!h]\label{fig1}
\centering

\tikzset{every picture/.style={line width=0.75pt}} %set default line width to 0.75pt        

\begin{tikzpicture}[x=0.75pt,y=0.75pt,yscale=-1,xscale=1]
%uncomment if require: \path (0,300); %set diagram left start at 0, and has height of 300
 		\coordinate (A) at (50,92); 
 		 \coordinate (B) at (114,92); 
		\coordinate (C) at (50,139);
		\coordinate (D) at (114,139);
		\coordinate (E) at (178,92);
		\coordinate (F) at (178,139);
		\coordinate (G) at (246,92);
		\coordinate (J) at (246,139);
		\coordinate (K) at (311,92);
		\coordinate (M) at (311,139);
		\coordinate (N) at (376,92);
		\coordinate (O) at (376,139);
		\coordinate (L) at (419,92);
		\coordinate (P) at (419,139);
		\coordinate (U) at (483,92);
		\coordinate (T) at (483,139);
 		 \draw[color=black,fill=black] (A) circle[radius=0.05cm];
 		 \draw[color=black,fill=black] (B) circle[radius=0.05cm];
		  \draw[color=black,fill=black] (C) circle[radius=0.05cm];
 		 \draw[color=black,fill=black] (D) circle[radius=0.05cm];
		  \draw[color=black,fill=black] (E) circle[radius=0.05cm];
 		 \draw[color=black,fill=black] (F) circle[radius=0.05cm];
		  \draw[color=black,fill=black] (G) circle[radius=0.05cm];
		  \draw[color=black,fill=black] (J) circle[radius=0.05cm];
 		 \draw[color=black,fill=black] (K) circle[radius=0.05cm];
		  \draw[color=black,fill=black] (M) circle[radius=0.05cm];
 		 \draw[color=black,fill=black] (N) circle[radius=0.05cm];
		  \draw[color=black,fill=black] (O) circle[radius=0.05cm];
 		 \draw[color=black,fill=black] (L) circle[radius=0.05cm];
		  \draw[color=black,fill=black] (P) circle[radius=0.05cm];
		  \draw[color=black,fill=black] (U) circle[radius=0.05cm];
 		 \draw[color=black,fill=black] (T) circle[radius=0.05cm];
		
%Straight Lines [id:da4869927261890181] 
\draw    (50,92) -- (114,92) ;
%Straight Lines [id:da06073799451459938] 
\draw    (114,92) -- (50,139) ;
%Straight Lines [id:da43817198418261527] 
\draw    (50,139) -- (114,139) ;
%Straight Lines [id:da1043841117504587] 
\draw    (50,92) -- (114,139) ;
%Straight Lines [id:da2700544817901107] 
\draw    (114,92) -- (178,92) ;
%Straight Lines [id:da13048666668531217] 
\draw    (178,92) -- (112,139) ;
%Straight Lines [id:da23976091721319093] 
\draw    (112,139) -- (178,139) ;
%Straight Lines [id:da10404038738670551] 
\draw    (114,92) -- (178,139) ;
%Straight Lines [id:da9246105903035355] 
\draw    (178,92) -- (246,92) ;
%Straight Lines [id:da6182719954927796] 
\draw    (246,92) -- (178,139) ;
%Straight Lines [id:da45446455787832507] 
\draw    (178,139) -- (246,139) ;
%Straight Lines [id:da5492598295912146] 
\draw    (178,92) -- (246,139) ;
%Straight Lines [id:da3053677933774701] 
\draw    (246,92) -- (311,92) ;
%Straight Lines [id:da9647269492432322] 
\draw    (311,92) -- (246,139) ;
%Straight Lines [id:da4621659286859823] 
\draw    (246,139) -- (311,139) ;
%Straight Lines [id:da9116716877538906] 
\draw    (246,92) -- (311,139) ;
%Straight Lines [id:da5848287392960356] 
\draw    (311,92) -- (376,92) ;
%Straight Lines [id:da6852217706592196] 
\draw    (376,92) -- (311,139) ;
%Straight Lines [id:da35205940948329006] 
\draw    (311,139) -- (376,139) ;
%Straight Lines [id:da23279378508235227] 
\draw    (311,92) -- (376,139) ;
%Straight Lines [id:da35606788707080184] 
\draw    (419,92) -- (483,92) ;
%Straight Lines [id:da5827095131915296] 
\draw    (483,92) -- (419,139) ;
%Straight Lines [id:da8056591750154896] 
\draw    (419,138) -- (483,139) ;
%Straight Lines [id:da48886336675393705] 
\draw    (419,92) -- (483,139) ;
%Curve Lines [id:da36899696951605887] 
\draw    (50,92) .. controls (90,33) and (159,58) .. (178,92) ;
%Curve Lines [id:da1258551970425441] 
\draw    (114,92) .. controls (154,33) and (227,57) .. (246,92) ;
%Curve Lines [id:da08702733858104428] 
\draw    (178,92) .. controls (223,32) and (292,57) .. (311,92) ;
%Curve Lines [id:da7342096299557961] 
\draw    (246,92) .. controls (286,32) and (357,57) .. (376,92) ;
%Curve Lines [id:da8400609548956985] 
\draw    (52,139) .. controls (88,184) and (150,175) .. (176,139) ;
%Curve Lines [id:da7586109044130755] 
\draw    (112,139) .. controls (148,184) and (218,174) .. (244,139) ;
%Curve Lines [id:da8661967265926356] 
\draw    (176,139) .. controls (212,184) and (283,174) .. (309,139) ;
%Curve Lines [id:da2644956597703134] 
\draw    (244,139) .. controls (280,183) and (348,174) .. (376,139) ;
%Straight Lines [id:da5715885914572099] 
\draw    (311,92) -- (335.4,72.7) ;
\draw [shift={(337,71.5)}, rotate = 143.13] [color={rgb, 255:red, 0; green, 0; blue, 0 }  ][line width=0.75]    (10.93,-3.29) .. controls (6.95,-1.4) and (3.31,-0.3) .. (0,0) .. controls (3.31,0.3) and (6.95,1.4) .. (10.93,3.29)   ;
%Straight Lines [id:da8416708957484889] 
\draw    (376,92) -- (400.4,72.7) ;
\draw [shift={(402,71.5)}, rotate = 143.13] [color={rgb, 255:red, 0; green, 0; blue, 0 }  ][line width=0.75]    (10.93,-3.29) .. controls (6.95,-1.4) and (3.31,-0.3) .. (0,0) .. controls (3.31,0.3) and (6.95,1.4) .. (10.93,3.29)   ;
%Straight Lines [id:da550735168445696] 
\draw    (419,92) -- (403.44,75.89) ;
\draw [shift={(402,74.5)}, rotate = 44.14] [color={rgb, 255:red, 0; green, 0; blue, 0 }  ][line width=0.75]    (10.93,-3.29) .. controls (6.95,-1.4) and (3.31,-0.3) .. (0,0) .. controls (3.31,0.3) and (6.95,1.4) .. (10.93,3.29)   ;
%Straight Lines [id:da21827047017266832] 
\draw    (483,92) -- (465.51,75.81) ;
\draw [shift={(464,74.5)}, rotate = 40.97] [color={rgb, 255:red, 0; green, 0; blue, 0 }  ][line width=0.75]    (10.93,-3.29) .. controls (6.95,-1.4) and (3.31,-0.3) .. (0,0) .. controls (3.31,0.3) and (6.95,1.4) .. (10.93,3.29)   ;
%Straight Lines [id:da026575871318057498] 
\draw    (309,139) -- (333.37,155.34) ;
\draw [shift={(335,156.5)}, rotate = 215.43] [color={rgb, 255:red, 0; green, 0; blue, 0 }  ][line width=0.75]    (10.93,-3.29) .. controls (6.95,-1.4) and (3.31,-0.3) .. (0,0) .. controls (3.31,0.3) and (6.95,1.4) .. (10.93,3.29)   ;
%Straight Lines [id:da18672563868200043] 
\draw    (374,139) -- (398.37,155.34) ;
\draw [shift={(400,156.5)}, rotate = 215.43] [color={rgb, 255:red, 0; green, 0; blue, 0 }  ][line width=0.75]    (10.93,-3.29) .. controls (6.95,-1.4) and (3.31,-0.3) .. (0,0) .. controls (3.31,0.3) and (6.95,1.4) .. (10.93,3.29)   ;
%Straight Lines [id:da1264914957990957] 
\draw    (421,139) -- (402.36,158.04) ;
\draw [shift={(401,159.5)}, rotate = 312.93] [color={rgb, 255:red, 0; green, 0; blue, 0 }  ][line width=0.75]    (10.93,-3.29) .. controls (6.95,-1.4) and (3.31,-0.3) .. (0,0) .. controls (3.31,0.3) and (6.95,1.4) .. (10.93,3.29)   ;
%Straight Lines [id:da25586352512009847] 
\draw    (481,139) -- (462.36,158.04) ;
\draw [shift={(461,159.5)}, rotate = 312.93] [color={rgb, 255:red, 0; green, 0; blue, 0 }  ][line width=0.75]    (10.93,-3.29) .. controls (6.95,-1.4) and (3.31,-0.3) .. (0,0) .. controls (3.31,0.3) and (6.95,1.4) .. (10.93,3.29)   ;

% Text Node
\draw (26,74) node [anchor=north west][inner sep=0.75pt]   [align=left] {$v_0$};
% Text Node
\draw (25,137) node [anchor=north west][inner sep=0.75pt]   [align=left] {$v_{11}$};
% Text Node
\draw (299,63) node [anchor=north west][inner sep=0.75pt]   [align=left] {$v_4$};
% Text Node
\draw (231,147) node [anchor=north west][inner sep=0.75pt]   [align=left] {$v_{14}$};
% Text Node
\draw (244,71) node [anchor=north west][inner sep=0.75pt]   [align=left] {$v_3$};
% Text Node
\draw (164,147) node [anchor=north west][inner sep=0.75pt]   [align=left] {$v_{13}$};
% Text Node
\draw (166,65) node [anchor=north west][inner sep=0.75pt]   [align=left] {$v_2$};
% Text Node
\draw (84,142) node [anchor=north west][inner sep=0.75pt]   [align=left] {$v_{12}$};
% Text Node
\draw (95,69) node [anchor=north west][inner sep=0.75pt]   [align=left] {$v_1$};
% Text Node
\draw (297,149) node [anchor=north west][inner sep=0.75pt]   [align=left] {$v_{15}$};
% Text Node
\draw (379,105) node [anchor=north west][inner sep=0.75pt]   [align=left] {$\ldots$};
% Text Node
\draw (416,70) node [anchor=north west][inner sep=0.75pt]   [align=left] {$v_{10}$};
% Text Node
\draw (415,142) node [anchor=north west][inner sep=0.75pt]   [align=left] {$v_{21}$};
% Text Node
\draw (483,141) node [anchor=north west][inner sep=0.75pt]   [align=left] {$v_{11}$};
% Text Node
\draw (484,72) node [anchor=north west][inner sep=0.75pt]   [align=left] {$v_0$};
\end{tikzpicture}
\caption{The graph $G'$.}
\end{figure}
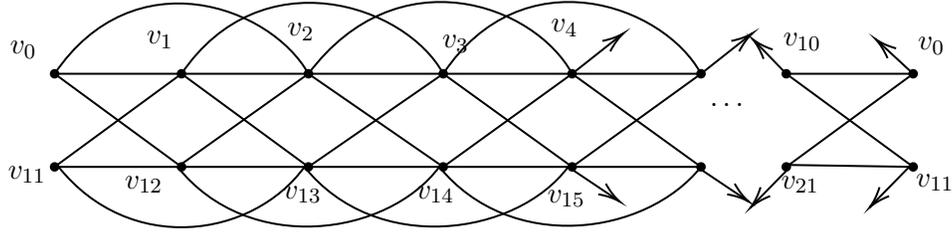

To make the distinction between the graph we will add on top of this,
we note that the diagonals of induced squares in this graph 
(including the blue squares in Figure~3 %\ref{fig4}) 
are vertices of the non-edges whose indices satisfy the following condition:
\begin{itemize}
	\item one index is $i \in [0,10]$ and the other is  
one of the three indices: $i+11$ or $(i \pm 2) \mod 11 +11$. 
\end{itemize} 

Now, we will add some additional edges to 
construct another square component on these underlying vertices which
will use a completely distinct set of diagonals and which will not
close off any of the diagonals of the above square component.  The idea
is to shift the indices on one side of the above circular strip enough so that when 
we add edges in a similar way to those above there
will be no overlaps of the diagonals.  We shift the bottom row by 6
indices to define $G''$ as follows.

Let $G''$ be the graph with the same set of vertices as $G'$ and whose 
edges,
$E(G'')$, consist of all 
the pairs of indices where the first 
index is $i \in [0,10]$ and the second is  
one of the two indices $(i+6\pm 1) \mod 11 +11$ 
(see  Figure~2). %\ref{fig:G''}.)

\begin{figure}[!h]\label{fig:G''}
    \centering

\tikzset{every picture/.style={line width=0.75pt}} %set default line width to 0.75pt        

\begin{tikzpicture}[x=0.75pt,y=0.75pt,yscale=-1,xscale=1]
%uncomment if require: \path (0,300); %set diagram left start at 0, and has height of 300
		\coordinate (A) at (130,78); 
 		 \coordinate (B) at (130,132); 
		\coordinate (C) at (70,78);
		\coordinate (D) at (70,132);
		\coordinate (E) at (196,78);
		\coordinate (F) at (196,132);
		\coordinate (G) at (264,78);
		\coordinate (J) at (264,132);
		\coordinate (K) at (329,78);
		\coordinate (M) at (329,132);
		\coordinate (N) at (394,78);
		\coordinate (O) at (394,132);
		\coordinate (L) at (439,78);
		\coordinate (P) at (439,132);
		\coordinate (U) at (501,78);
		\coordinate (T) at (501,132);
 		 \draw[color=black,fill=black] (A) circle[radius=0.05cm];
 		 \draw[color=black,fill=black] (B) circle[radius=0.05cm];
		  \draw[color=black,fill=black] (C) circle[radius=0.05cm];
 		 \draw[color=black,fill=black] (D) circle[radius=0.05cm];
		  \draw[color=black,fill=black] (E) circle[radius=0.05cm];
 		 \draw[color=black,fill=black] (F) circle[radius=0.05cm];
		  \draw[color=black,fill=black] (G) circle[radius=0.05cm];
		  \draw[color=black,fill=black] (J) circle[radius=0.05cm];
 		 \draw[color=black,fill=black] (K) circle[radius=0.05cm];
		  \draw[color=black,fill=black] (M) circle[radius=0.05cm];
 		 \draw[color=black,fill=black] (N) circle[radius=0.05cm];
		  \draw[color=black,fill=black] (O) circle[radius=0.05cm];
 		 \draw[color=black,fill=black] (L) circle[radius=0.05cm];
		  \draw[color=black,fill=black] (P) circle[radius=0.05cm];
		  \draw[color=black,fill=black] (U) circle[radius=0.05cm];
 		 \draw[color=black,fill=black] (T) circle[radius=0.05cm];
%Straight Lines [id:da19513863100404683] 

%Straight Lines [id:da7130351245733801] 
\draw    (130,78) -- (70,132) ;
%Straight Lines [id:da0618483464512154] 
%Straight Lines [id:da10184486364402323] 
\draw    (70,78) -- (130,132) ;
%Straight Lines [id:da2924533333170418] 
%Straight Lines [id:da7894575124763799] 
\draw    (196,78) -- (130,132) ;
%Straight Lines [id:da36749432926353887] 
%Straight Lines [id:da14864953700515549] 
\draw    (130,78) -- (196,132) ;
%Straight Lines [id:da08444779251223622] 
%Straight Lines [id:da4280585488424775] 
\draw    (264,78) -- (196,132) ;
%Straight Lines [id:da9462607550476136] 
%Straight Lines [id:da0900176207654888] 
\draw    (196,78) -- (264,132) ;
%Straight Lines [id:da6637258887581226] 
%Straight Lines [id:da04567262033374475] 
\draw    (329,78) -- (264,132) ;
%Straight Lines [id:da5625186200953374] 
%Straight Lines [id:da633637335861355] 
\draw    (264,78) -- (329,132) ;
%Straight Lines [id:da8123477686916292] 
%Straight Lines [id:da865196787165958] 
\draw    (394,78) -- (329,132) ;
%Straight Lines [id:da057183278797465764] 
%Straight Lines [id:da7847908861995059] 
\draw    (329,78) -- (394,132) ;
%Straight Lines [id:da24676239182040338] 
%Straight Lines [id:da6222309360912488] 
\draw    (501,78) -- (439,132) ;
%Straight Lines [id:da8089955372185209] 
%Straight Lines [id:da08041790350170142] 
\draw    (439,78) -- (501,132) ;
%Straight Lines [id:da7947263566851164] 

% Text Node
\draw (65,61) node [anchor=north west][inner sep=0.75pt]   [align=left] {$v_{0}$};
% Text Node
\draw (65,135) node [anchor=north west][inner sep=0.75pt]   [align=left] {$v_{17}$};
% Text Node
\draw (317,61) node [anchor=north west][inner sep=0.75pt]   [align=left] {$v_{4}$};
% Text Node
\draw (255,135) node [anchor=north west][inner sep=0.75pt]   [align=left] {$v_{20}$};
% Text Node
\draw (255,61) node [anchor=north west][inner sep=0.75pt]   [align=left] {$v_{3}$};
% Text Node
\draw (182,135) node [anchor=north west][inner sep=0.75pt]   [align=left] {$v_{19}$};
% Text Node
\draw (182,61) node [anchor=north west][inner sep=0.75pt]   [align=left] {$v_{2}$};
% Text Node
\draw (120,135) node [anchor=north west][inner sep=0.75pt]   [align=left] {$v_{18}$};
% Text Node
\draw (120,61) node [anchor=north west][inner sep=0.75pt]   [align=left] {$v_{1}$};
% Text Node
\draw (317,135) node [anchor=north west][inner sep=0.75pt]   [align=left] {$v_{21}$};
% Text Node
\draw (403,99) node [anchor=north west][inner sep=0.75pt]   [align=left] {$\ldots$};
% Text Node
\draw (433,61) node [anchor=north west][inner sep=0.75pt]   [align=left] {$v_{10}$};
% Text Node
\draw (433,135) node [anchor=north west][inner sep=0.75pt]   [align=left] {$v_{16}$};
% Text Node
\draw (502,135) node [anchor=north west][inner sep=0.75pt]   [align=left] {$v_{17}$};
% Text Node
\draw (502,61) node [anchor=north west][inner sep=0.75pt]   [align=left] {$v_{0}$};

\draw (390,61) node [anchor=north west][inner sep=0.75pt]   [align=left] {$v_5$};

\draw (390,135) node [anchor=north west][inner sep=0.75pt]   [align=left] {$v_{11}$};
\end{tikzpicture}
\caption{ The graph $G''$: note the bottom indices are shifted 
to emphasize the similarity with $G'$.}
\end{figure}
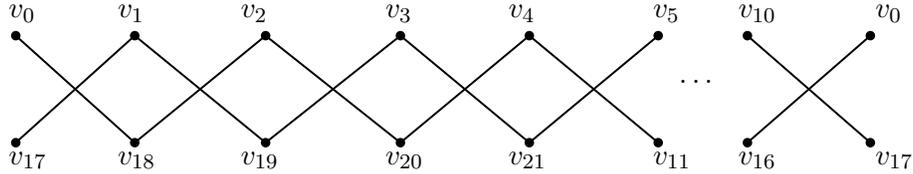

With these two graphs now constructed, we consider the graph $G=G'\cup
G''$, with $V(G)=V(G')=V(G'')$, and $E(G)=E(G')\cup E(G'')$,
illustrated (two ways) in Figure~3 below.

In the graph $G$ the new diagonals of induced squares (i.e., those 
squares which use at least one 
edge from $G''$) consist of all the pairs of indices where:
\begin{itemize}
	\item one  
index is $i \in [0,10]$ and the other is  
one of the three indices: $(i+6) \mod 11 +11$ or $(i+6\pm 2) \mod 11 +11$.
\end{itemize}

\begin{figure}[!h]\label{fig4} %\label{fig:twofullsupportgraphs}
\centering
%VICTOR: cannot get my LaTeX file to parse with the graphics included
\includegraphics[scale=.25]{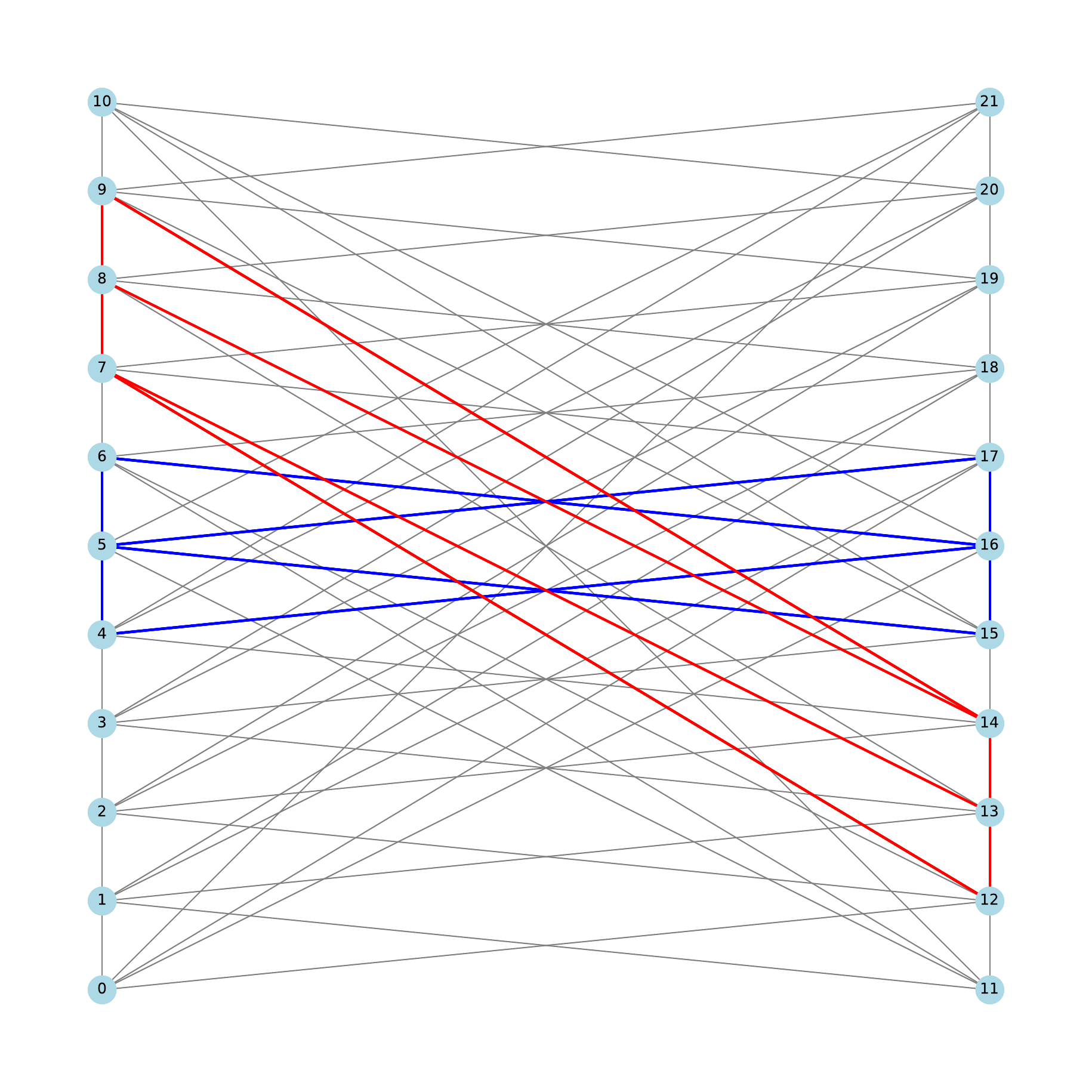}
\includegraphics[scale=.25]{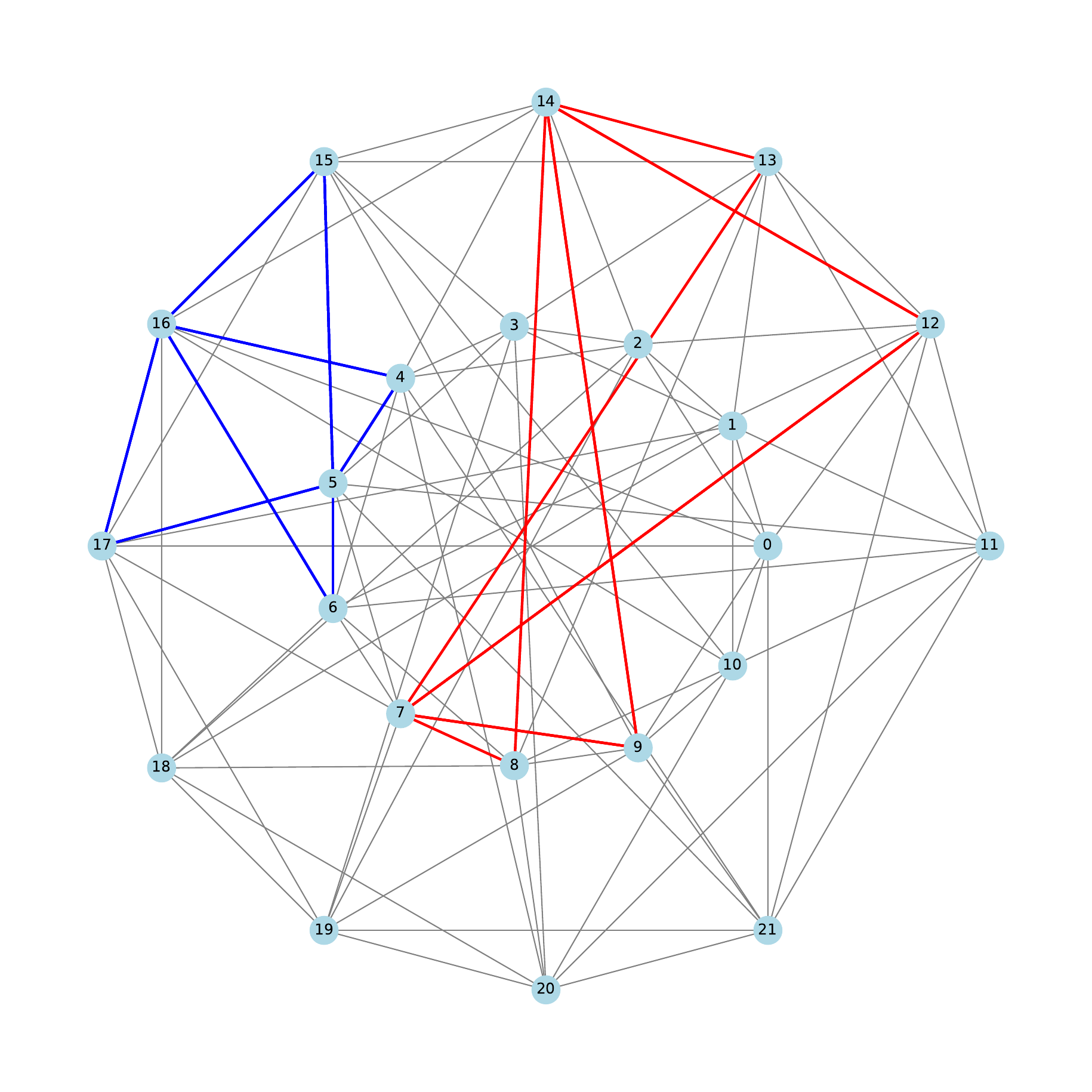}
\caption{Two perspectives on the graph $G$. In both are highlighted  all the squares which contain the vertices 5 and 16 (in blue) and the 
vertices 7 and 14 (in red). Both drawings emphasize the bipartite  division of the vertices, the drawing on the left highlights the 
distinction (as seen by the different ``slopes'') between the  blue squares which are in the first component of full support and the 
red squares which are in the second component of full support. The figure at right is drawn to emphasize the symmetry and to 
avoid the multiple collinear vertices which hide some of the edges in the graph on the left (for instance those connecting the vertices at the bottom to the ones at the top).}
\end{figure}

The diagonals of the first square component with full support are the
diagonals coming from $G'$, and the diagonals of the second square
component with full support are the diagonals coming from squares which
use some edges from $G''$.  Note that both sets of diagonals have full
support by construction. Also note that every diagonal of a square in 
$G$ is of one of the two forms noted in the two bullet points above. 
Since the ones in $G'$ and those including an edge of $G''$ are 
distinct, no diagonal in one can be connected in $T_1(G)$ to a 
diagonal from the other. Thus $T_1(G)$ has two distinct
components both having full support in $G$.
\end{proof}

This construction was independently verified by computer and a number 
of related examples were also generated and 
checked.\footnote{The relevant software was 
developed and implemented in
$\texttt{C++}$ by the first author. Source code and data 
available at: 
\url{http://comet.lehman.cuny.edu/~behrstock} .} 
We note in 
particular that a slight variant of the above ideas yield 
an example in which there are \emph{three} distinct square 
components with full support (using 22 vertices on each side instead 
of 11 and then shifting by 12 instead of 6), 
and further variations also one to build examples with an even larger number of  
distinct square components of full support.

\subsubsection{Non-monotonicity of connectivity in square percolation}
\label{subsection: nonmonotonicity}

The connectivity properties of the graphs $T_k(\Gamma)$ are \emph{not}
monotone under the addition of edges to $\Gamma$.  Indeed, adding an
edge to $\Gamma$ could have a number of different potential effects. 
It could create new induced squares in
$\Gamma$ (and thus new edges in $T_1(\Gamma)$), but it also 
removes a non-edge (so
that $T_1(\Gamma)$ loses a vertex and all edges incident with it), 
which can destroy some 
induced squares that were in $\Gamma$.

As an example, consider the complete bipartite graph $\Gamma=K_{2,n-2}$ with one part of size $2$ and another of size $n-2$. Clearly, $T_1(\Gamma)$ is connected. However if we delete any edge from $\Gamma$, or if we add an edge inside the part of size $2$, then $T_1(\Gamma)$ becomes disconnected (and in the later case becomes an empty graph on $\binom{n-2}{2}$ vertices!). This illustrates the sensitivity and non-monotonicity of the connectivity properties we are considering in this paper.

\subsection{The extremal bound for thick graphs}
We also state here the following useful property of graphs that have a square component with full support, which was first proved in~\cite[Lemma~5.5]{behrstock2018global}, and later greatly generalized so as to encompass all thick graphs in~\cite[Theorem~1.4]{Behrstock2024thickness}.
\begin{proposition}[Extremal bound for thick graphs~\cite{behrstock2018global,Behrstock2024thickness}]\label{prop: combinatorial characterisation of thickness}
	Let $\Gamma$ be a graph on $n$ vertices such that $T_1(\Gamma)$ contains a component $C$ with $\mathrm{supp}(C)= V(\Gamma)$. Then $e(\Gamma)\geq 2n-4$.
\end{proposition}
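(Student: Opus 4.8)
**Proof proposal for Proposition (Extremal bound for thick graphs): $e(\Gamma) \geq 2n-4$ when $T_1(\Gamma)$ has a full-support component.**

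My plan is to prove this by induction on $n$, building up the support of the component $C$ one square at a time and tracking how many new vertices and new edges each square contributes. Let me think about the structure.

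We have a connected component $C$ of $T_1(\Gamma)$ with full support. Think of $C$ as a connected graph whose vertices are non-edges of $\Gamma$ and whose edges are "shared diagonal" adjacencies coming from induced squares. Pick a spanning tree $\mathcal{T}$ of $C$. Each edge of $\mathcal{T}$ corresponds to an induced square in $\Gamma$ (a 4-cycle $v_1 v_2 v_3 v_4$ with diagonals $v_1 v_3$ and $v_2 v_4$ being the two non-edges that are the endpoints of this edge of $\mathcal{T}$).

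Now I want to order the edges $e_1, e_2, \ldots, e_m$ of $\mathcal{T}$ so that each prefix forms a subtree (possible since $\mathcal{T}$ is a tree — process it greedily from a root). Let $Q_1, \ldots, Q_m$ be the corresponding induced squares, and let $V_i = V(Q_1) \cup \cdots \cup V(Q_i)$, $\Gamma_i = \Gamma[V_i]$. I claim that at each step $i \geq 2$, the square $Q_i$ introduces $t_i$ new vertices (where $t_i \in \{0,1,2,3,4\}$, but actually since $e_i$ shares an endpoint non-edge $f$ with some earlier $e_j$, the two vertices of $f$ are already in $V_{i-1}$, so $t_i \in \{0,1,2\}$), and I want to show $Q_i$ contributes at least $2 t_i$ new edges to $\Gamma_i$. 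A square on 4 vertices has 4 edges; if $t_i$ of its vertices are new, then all $\binom{t_i}{1}\cdot(\text{stuff})$... concretely, the 4 edges of $Q_i$ that are new are exactly those incident to at least one new vertex. If $t_i = 2$ and the two new vertices are opposite in the square, all 4 edges are new; if they are adjacent in the square, 3 edges are new. If $t_i = 1$, exactly 2 edges are new. So I need a more careful accounting: the bound $2t_i$ holds when $t_i \in \{0, 2\}$ if the new vertices are opposite, and for $t_i = 1$ we only get $2 = 2t_i$ — good — but for $t_i = 2$ with adjacent new vertices we only get $3 < 4$.

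The fix, and the step I expect to be the main obstacle, is handling the "adjacent new pair" case and the base case bookkeeping. For the base case: $Q_1$ has 4 vertices and 4 edges, giving $4 \geq 2\cdot 4 - 4$. For the inductive step, when $Q_i$ adds two new vertices $a, b$ that are adjacent along the square (say the square is $a\, b\, x\, y$ with $x, y \in V_{i-1}$), the non-edge of $Q_i$ shared with the earlier tree is $xy$ — wait, but then the other diagonal $ab$ is a non-edge, and $a,b$ new; the edges $ab$? No, $ab$ is a diagonal hence a non-edge. Actually re-examine: in square $a b x y$ (cycle), the edges are $ab, bx, xy, ya$ and diagonals (non-edges) are $ax$ and $by$. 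For this to share a diagonal with an earlier square, one of $ax, by$ must be an earlier non-edge; but $a$ is new, so $ax$ and $by$ both involve a new vertex and can't be the shared one — contradiction. Hence \textbf{if $Q_i$ shares its diagonal with an earlier square, at most one new vertex can be "non-diagonal", so the two-new-vertex case forces the new vertices to be the two diagonal endpoints}, which are \emph{opposite} in the square — exactly the good case giving 4 new edges. So the adjacent-new-pair case \emph{cannot occur}, which resolves the obstacle. Then summing: $e(\Gamma) \geq e(\Gamma_m) \geq 4 + \sum_{i=2}^m 2t_i = 4 + 2(n - 4) = 2n - 4$, using $\sum_{i \geq 2} t_i = |V_m| - 4 = n - 4$ and $|V_m| = n$ by full support. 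I would also double-check the edge-cases $t_i = 0$ (contributes $0$, fine) and verify no edge is double-counted across steps (each new edge is attributed to the first square introducing one of its endpoints, so counting is clean).
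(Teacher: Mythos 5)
Your proof is correct. The key observation you identify and resolve—that when a square $Q_i$ shares a diagonal $f_i$ with an already-processed square, the two endpoints of $f_i$ are already in $V_{i-1}$, so any new vertices must lie on the \emph{other} diagonal of $Q_i$ and are therefore opposite corners—is exactly what makes the accounting $e(\Gamma[V_i]) - e(\Gamma[V_{i-1}]) \geq 2t_i$ go through. The telescoping sum over the greedy spanning-tree ordering then gives $e(\Gamma) \geq 4 + 2(n-4) = 2n-4$, and the bookkeeping is clean since each edge is counted once, in the step where both its endpoints first appear. (You may want to note explicitly that the hypothesis forces $n\geq 4$ or a vacuous case, and hence forces $C$ to contain at least one square, so the spanning tree $\mathcal{T}$ is nonempty and the base case applies.)

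One remark on comparison: the paper itself does not prove this proposition — it is stated with citations to~\cite{behrstock2018global} (Lemma~5.5) and~\cite{Behrstock2024thickness} (Theorem~1.4) and used as a black box. So there is no in-paper argument to match against; your proof is a valid self-contained derivation of the quoted bound via a spanning-tree/incremental-square argument, which is in the same spirit as the cited source.
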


\subsection{Probabilistic notation and tools}
We write $\mathbb{P}$, $\mathbb{E}$ and $\mathrm{Var}$ for probability, expectation and variance respectively. 
We say that a sequence of events $\mathcal{E}=\mathcal{E}(n)$, $n\in \mathbb{N}$  holds \emph{a.a.s.}\ (asymptotically almost surely) if $\lim_{n\rightarrow \infty} \mathbb{P}(\mathcal{E}(n))=1$. Throughout the paper we use standard Landau notation: for functions $f,g: \ \mathbb{N}\rightarrow \mathbb{R}$ we write $f=o(g)$ if $\lim_{n\rightarrow \infty} f(n)/g(n)=0$, $f=O(G)$ if there exists a real constant $C>0$ such that  $\limsup_{n\rightarrow \infty} \vert f(n)/g(n)\vert \leq C$. We occasionally use $f\ll g$ as a shorthand for $f=o(g)$. We further write $f=\omega(g)$ if $g=o(f)$ and $f=\Omega(g)$ if $g=O(f)$. Finally we write $f=\theta(g)$ if $f=O(g)$ and $f=\Omega(g)$ both hold.

We shall make repeated use of Markov's inequality: given a non-negative integer-valued random variable $X$,  $\mathbb{P}(X>a)\leq \frac{1}{a+1}\mathbb{E}X$ for any integer $a \geq 0$. We shall also use the following standard Chernoff bound, see e.g. ~\cite[Theorems A.1.11 and A.1.13]{AlonSpencer:book}:
\begin{proposition}[Chernoff bound]\label{prop: Chernoff}
Let $X_1, X_2, \ldots, X_n$ be independent and identically distributed Bernoulli random variables. Set $X=\sum_i X_i$ and $\mu:=\mathbb{E}X$. Then for any $\delta\in (0, 2/3)$, 
\[\mathbb{P}(\vert X-\mu\vert\geq \delta \mu)\leq 2\exp\left(-\frac{\delta^2 \mu}{3}\right).\] 	
\end{proposition}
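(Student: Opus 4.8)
\textbf{Proof plan for Proposition~\ref{prop: Chernoff}.}

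The plan is to prove the standard multiplicative Chernoff bound using the exponential moment method (Bernstein's trick). Since the statement as given follows from the classical references cited, I would not reprove it from scratch in the paper; but here is how the argument runs. First I would handle the upper tail $\mathbb{P}(X - \mu \geq \delta\mu)$. For any $t > 0$, Markov's inequality applied to $e^{tX}$ gives $\mathbb{P}(X \geq (1+\delta)\mu) \leq e^{-t(1+\delta)\mu}\,\mathbb{E}e^{tX}$. By independence, $\mathbb{E}e^{tX} = \prod_i \mathbb{E}e^{tX_i}$, and writing $p_i = \mathbb{P}(X_i = 1)$ we have $\mathbb{E}e^{tX_i} = 1 + p_i(e^t - 1) \leq \exp\bigl(p_i(e^t-1)\bigr)$, so $\mathbb{E}e^{tX} \leq \exp\bigl(\mu(e^t-1)\bigr)$. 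Optimizing over $t$ by taking $t = \log(1+\delta)$ yields $\mathbb{P}(X \geq (1+\delta)\mu) \leq \exp\bigl(-\mu[(1+\delta)\log(1+\delta) - \delta]\bigr)$.

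Second, I would symmetrically treat the lower tail: for $t > 0$, $\mathbb{P}(X \leq (1-\delta)\mu) = \mathbb{P}(e^{-tX} \geq e^{-t(1-\delta)\mu}) \leq e^{t(1-\delta)\mu}\,\mathbb{E}e^{-tX} \leq \exp\bigl(t(1-\delta)\mu + \mu(e^{-t}-1)\bigr)$, and choosing $t = -\log(1-\delta)$ gives $\mathbb{P}(X \leq (1-\delta)\mu) \leq \exp\bigl(-\mu[(1-\delta)\log(1-\delta) + \delta]\bigr)$. The final step is purely analytic: one checks that for $\delta \in (0, 2/3)$ (indeed for a somewhat larger range), both exponents are bounded below by $\delta^2/3$, i.e.\ $(1+\delta)\log(1+\delta) - \delta \geq \delta^2/3$ and $(1-\delta)\log(1-\delta) + \delta \geq \delta^2/3$. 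These follow from the Taylor expansions $(1+\delta)\log(1+\delta) - \delta = \delta^2/2 - \delta^3/6 + \cdots$ and $(1-\delta)\log(1-\delta)+\delta = \delta^2/2 + \delta^3/6 + \cdots$, together with elementary monotonicity estimates on the resulting series; restricting to $\delta < 2/3$ ensures the cubic correction does not overwhelm the quadratic main term. Adding the two tail bounds and using $2e^{-\delta^2\mu/3} \geq e^{-\delta^2\mu/3} + e^{-\delta^2\mu/3}$ gives the stated two-sided inequality.

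The only genuinely delicate point is the last analytic step — verifying the inequality $(1\pm\delta)\log(1\pm\delta) \mp \delta \geq \delta^2/3$ on the interval $(0,2/3)$ with the right constant. This is a routine but fiddly calculus exercise (differentiate, or bound the alternating/positive series termwise), and it is precisely why references such as \cite[Theorems A.1.11 and A.1.13]{AlonSpencer:book} state the result with the specific constant $1/3$ and the specific range $\delta \in (0, 2/3)$. Since this is a textbook fact, in the paper I would simply cite it as above rather than reproduce the computation.
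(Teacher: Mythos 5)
The paper does not prove this proposition; it is stated as a standard textbook fact and cited directly to \cite[Theorems A.1.11 and A.1.13]{AlonSpencer:book}, exactly as you suggest doing. Your sketch of the underlying argument (exponential moment method, optimize $t=\log(1\pm\delta)$, then verify $(1\pm\delta)\log(1\pm\delta)\mp\delta\geq\delta^2/3$ on the stated range via Taylor/series bounds) is the standard proof and is correct.
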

Finally, we shall need Janson's celebrated inequality~\cite{JansonLuczakRucinski90} as well as a large deviation extension of it due to Janson~\cite{Janson90}, which can be found in the form given below in e.g.~\cite[Theorem 8.1.1 and 8.7.2]{AlonSpencer:book}. Given a set $X$ and $q\in [0,1]$, a $q$-random subset $X'$ of $X$ is obtained by including each element of $X$ in $X'$ with probability $q$, independently of all other elements. 
\begin{proposition}[Janson inequalities]\label{proposition: Janson inequality}
	Let $U$ be a finite set and $U_q$ a $q$-random subset of $U$ for some $q\in [0,1]$. Let $\mathcal{F}$ be a family of subsets of $U$, and for every $F\in \mathcal{F}$ let $I_F$ be the indicator function of the event $\{F\subseteq U_q\}$. Set $I_{\mathcal{F}}=\sum_{F\in \mathcal{F}} I_F$, and let $\mu=\mathbb{E} I_{\mathcal{F}}$ and $\Delta=\sum_{F,F' \in \mathcal{F}: \ F\cap F' \neq \emptyset, \  F\neq F'} \mathbb{E} (I_F I_F')$. Then
	\begin{align*} \mathbb{P}\left(I_{\mathcal{F}}=0\right)\leq \exp \left(-\frac{\mu^2}{2\Delta}\right) && \textrm{ and  }&& \mathbb{P}\left(I_{\mathcal{F}}=0\right)\leq \exp \left(-\mu +\frac{\Delta}{2} \right).\end{align*}
	Further, for any $\gamma>0$,
		\begin{align*} \mathbb{P}\left(I_{\mathcal{F}}\leq (1-\gamma)\mu\right)\leq \exp \left(-\frac{\gamma^2\mu}{2 + 2(\Delta/\mu)}\right).\end{align*}
\end{proposition}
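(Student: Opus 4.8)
The displayed bounds are Janson's inequality together with its lower-tail refinement, so in the paper one simply cites~\cite{AlonSpencer:book} and~\cite{Janson90}; were one to prove them from scratch, the plan would be as follows. For $F\in\mathcal{F}$ write $A_F:=\{F\subseteq U_q\}$, an increasing event on the product space $\{0,1\}^U$ with $\mathbb{P}(A_F)=q^{\vert F\vert}=:p_F$ and indicator $I_F$, so that each complement $\bar A_F$ is a decreasing event. The core estimate is $\mathbb{P}(I_{\mathcal{F}}=0)\le e^{-\mu+\Delta/2}$. To prove it I would fix an arbitrary ordering $F_1,\dots,F_m$ of $\mathcal{F}$ and expand $\mathbb{P}(I_{\mathcal{F}}=0)=\prod_{i=1}^m\mathbb{P}(\bar A_{F_i}\mid\bar A_{F_1}\cap\dots\cap\bar A_{F_{i-1}})$; for each $i$ I would split $\{j<i\}$ into $D_i:=\{j<i:\ F_j\cap F_i\ne\emptyset\}$ and its complement, observing that $A_{F_i}$, depending only on the coordinates of $F_i$, is independent of the conditioning coming from the sets disjoint from $F_i$, while Harris's inequality (FKG on the discrete cube) controls the interaction with the remaining conditioning. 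This yields the per-step bound $\mathbb{P}(\bar A_{F_i}\mid\cdots)\le 1-p_{F_i}+\sum_{j\in D_i}\mathbb{P}(A_{F_i}\cap A_{F_j})$; multiplying over $i$, using $1+x\le e^x$, and recognising $\sum_i\sum_{j\in D_i}\mathbb{P}(A_{F_i}\cap A_{F_j})=\Delta/2$ gives the core estimate.

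The bound $\mathbb{P}(I_{\mathcal{F}}=0)\le e^{-\mu^2/(2\Delta)}$, understood in the regime $\Delta\ge\mu$ where it strengthens the core estimate, I would deduce by a deletion argument: retain each member of $\mathcal{F}$ independently with probability $s\in(0,1]$ to form $\mathcal{F}_s\subseteq\mathcal{F}$, so that $\{I_{\mathcal{F}}=0\}\subseteq\{I_{\mathcal{F}_s}=0\}$ pointwise and hence $\mathbb{P}(I_{\mathcal{F}}=0)\le\mathbb{P}(I_{\mathcal{F}_s}=0)$. Since $\mathbb{E}[\mu_{\mathcal{F}_s}-\Delta_{\mathcal{F}_s}/2]=s\mu-s^2\Delta/2$, choosing $s=\mu/\Delta\in(0,1]$ there is a realisation of $\mathcal{F}_s$ with $\mu_{\mathcal{F}_s}-\Delta_{\mathcal{F}_s}/2\ge\mu^2/(2\Delta)$, and applying the core estimate to that $\mathcal{F}_s$ finishes it.

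For the lower-tail bound on $\mathbb{P}(I_{\mathcal{F}}\le(1-\gamma)\mu)$ I would use the exponential-moment method: for $t>0$, $\mathbb{P}(I_{\mathcal{F}}\le a)\le e^{ta}\,\mathbb{E}[e^{-tI_{\mathcal{F}}}]$. Writing $\beta:=1-e^{-t}$ and using the identity $e^{-tI_F}=1-\beta I_F$ (valid as $I_F\in\{0,1\}$), one bounds $\mathbb{E}[e^{-tI_{\mathcal{F}}}]=\mathbb{E}\bigl[\prod_F(1-\beta I_F)\bigr]$ from above by peeling off one factor at a time and splitting the remaining factors into those depending on coordinates meeting $F$ and those not, exactly as in the core estimate; this produces $\mathbb{E}[e^{-tI_{\mathcal{F}}}]\le\exp(-\beta\mu+\tfrac12\beta^2\Delta)$. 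Taking $a=(1-\gamma)\mu$ and optimising over $t$ --- the relevant choice has $1-e^{-t}$ of order $\gamma\mu/(\mu+\Delta)$ --- yields the exponent $\gamma^2\mu^2/(2(\mu+\Delta))=\gamma^2\mu/(2+2\Delta/\mu)$.

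The one genuinely delicate point, common to all three parts, is that turning the per-step estimates into clean products of exponentials requires the factors $(1-p_F)^{-1}$ implicit in them to be negligible. I would handle this, as is standard, by first reducing to the case that every $p_F$ is small, splitting any $F$ with $q^{\vert F\vert}$ not small into several disjoint ``copies'' corresponding to a partition of its coordinate set --- an operation that leaves $\mu$ unchanged and only decreases $\Delta$, hence only weakens the three bounds. Beyond this, everything is routine bookkeeping with the chain rule and the FKG/Harris inequality.
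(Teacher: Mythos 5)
The paper does not prove this proposition at all: it is invoked as a standard black box, with the explicit pointer to~\cite[Theorems 8.1.1 and 8.7.2]{AlonSpencer:book} and to~\cite{Janson90}. You correctly recognised this, so strictly there is no in-paper proof to compare against.

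That said, your sketch is an accurate rendition of the textbook arguments: the chain-rule expansion of $\mathbb{P}(\bigcap_i \bar A_{F_i})$ together with Harris/FKG for the core bound $e^{-\mu+\Delta/2}$ (Boppana--Spencer), the random-deletion argument with retention probability $s=\mu/\Delta$ for the $e^{-\mu^2/(2\Delta)}$ form, and the exponential-moment computation for the lower tail. Two small remarks. First, you rightly flag that the deletion argument needs $\Delta\geq\mu$ so that $s\leq 1$; the paper's statement of the proposition actually omits this side condition, which is a mild imprecision in the source rather than in your write-up, and is harmless because when $\Delta<\mu$ the core bound is stronger anyway. Second, the ``genuinely delicate point'' you raise about $(1-p_F)^{-1}$ factors is not present in the chain-rule proof of the core estimate: the per-step inequality
\[
\mathbb{P}\bigl(\bar A_{F_i}\mid \bar A_{F_1}\cap\cdots\cap \bar A_{F_{i-1}}\bigr)\ \leq\ 1-p_{F_i}+\sum_{j\in D_i}\mathbb{P}(A_{F_i}\cap A_{F_j})
\]
is turned into an exponential bound via $1+z\leq e^z$, which holds for all real $z$ with no restriction on the size of $p_{F_i}$, so no splitting of large $F$'s is needed there. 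The splitting trick (or passing to a limit) is genuinely relevant in some presentations of the exponential-moment/lower-tail argument, so the caveat is not wrong, but it is localised to that last part rather than ``common to all three.''
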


\subsection{Building and connecting giants in square percolation}
We shall need two results of Behrstock, Falgas--Ravry and Susse 
from~\cite{behrstock2022square}.  The first establishes 
that for $p$ slightly above the
threshold for the emergence of a giant square component, the number of
non-edges in somewhat-large square components is concentrated around
its mean (which is of order $\Omega(n^2)$).
\begin{proposition}[{Many edges in large components~\cite[Corollary
6.3]{behrstock2022square}}]\label{prop: many edges in large components}
Let $c>\sqrt{\sqrt{6}-2}$ be fixed, and let $C>0$ be an arbitrary constant. Let $\Gamma\sim \mathcal{G}_{n,p}$ and suppose $p=p(n)$ satisfies 
\begin{align*}
	\frac{\lambda}{\sqrt{n}}\leq p(n)\leq  C\sqrt{\frac{\log n}{n}}.
	\end{align*}
Then with probability $1-O(n^{-1})$, the number $N_v$ of non-edges $f\in E(\overline{\Gamma})$ that belong to square components of size at least $(\log n)^4$ satisfies 
\[N_v=(1-o(1))\mathbb{E}N_v =\Omega(n^{2}).\] 
\end{proposition}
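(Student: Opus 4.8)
The plan is to first determine the order of $\mathbb{E}N_v$ and then establish concentration by switching to the complementary statistic: the number of non-edges lying in \emph{small} square components. For the order of the mean the bound $\mathbb{E}N_v\le |E(\overline{\Gamma})|=O(n^2)$ is immediate, so the content is the lower bound $\mathbb{E}N_v=\Omega(n^2)$. I would get this by fixing a non-edge $f=\{u,v\}$ and exploring its component $C(f)$ in $T_1(\Gamma)$ by breadth-first search: to process a non-edge $\{a,b\}$ already placed in the component one reveals the $\Gamma$-edges joining $a$ and $b$ to all as-yet-untouched vertices, records their common neighbours, and then reveals the $\Gamma$-status of the pairs of those common neighbours to produce the new square-neighbours $\{w,w'\}$ of $\{a,b\}$. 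As long as fewer than $(\log n)^4$ non-edges have been discovered only $\mathrm{polylog}(n)$ vertices have been touched, so this exploration is sandwiched between two Galton--Watson processes whose offspring laws agree with the true one up to a $1\pm o(1)$ factor; in the range $c/\sqrt n\le p\le C\sqrt{\log n/n}$ with $c>\sqrt{\sqrt 6-2}$ this branching process is supercritical --- this is precisely the square-percolation threshold of \cite{behrstock2022square} --- so it survives for $(\log n)^4$ generations with probability at least a constant $\rho_0>0$. Summing $\mathbb{P}(|C(f)|\ge(\log n)^4)\ge\rho_0(1-o(1))$ over all non-edges gives $\mathbb{E}N_v\ge\rho_0(1-o(1))\binom{n}{2}(1-p)=\Omega(n^2)$. (Alternatively one can simply quote from \cite{behrstock2022square} that a.a.s.\ some square component already contains $\Omega(n^2)$ non-edges.)

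Next I would reduce concentration of $N_v$ to that of $N_s:=\#\{f\in E(\overline{\Gamma}):|C(f)|<(\log n)^4\}$, using the partition identity $N_v+N_s=|E(\overline{\Gamma})|$. The total number of non-edges is $\mathrm{Bin}\big(\binom{n}{2},1-p\big)$, hence equal to $(1\pm o(1))\binom{n}{2}(1-p)$ with probability $1-e^{-\Omega(n^2)}$ by the Chernoff bound of Proposition~\ref{prop: Chernoff}. The same branching comparison shows the exploration dies out with probability bounded away from $0$, so $\mathbb{E}N_s=\Theta(n^2)$ as well (and where $\mathbb{E}N_s=o(n^2)$, for $p$ comfortably above $n^{-1/2}$, the estimates below only improve). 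So it suffices to prove $N_s=(1\pm o(1))\mathbb{E}N_s$ with probability $1-O(n^{-1})$: subtracting from the sharply concentrated quantity $|E(\overline{\Gamma})|$ and using $\mathbb{E}N_v=\Theta(n^2)$ then yields $N_v=(1\pm o(1))\mathbb{E}N_v=\Omega(n^2)$.

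To concentrate $N_s=\sum_{f}\mathbbm{1}[|C(f)|<(\log n)^4]$ I would use the second moment method: $\mathrm{Var}(N_s)=\sum_{f\ne f'}\mathrm{Cov}\big(\mathbbm{1}[|C(f)|<(\log n)^4],\,\mathbbm{1}[|C(f')|<(\log n)^4]\big)$. Pairs $f,f'$ that end up in a common small component contribute at most $\sum_f\mathbb{E}\big[|C(f)|\,\mathbbm{1}[|C(f)|<(\log n)^4]\big]=O(n^2\,\mathrm{polylog}(n))$ in total. For the rest I would exploit that, conditioned on $|C(f)|<(\log n)^4$, the BFS above reveals the $\Gamma$-status of only $O(n\,\mathrm{polylog}(n))$ edge-slots, and in a structured way: all slots incident to one of $O(\mathrm{polylog}(n))$ distinguished vertices, plus $\mathrm{polylog}(n)$ further slots. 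Hence for $f'$ whose two endpoints avoid that distinguished set, the exploration of $C(f')$ runs on essentially fresh randomness and its outcome is perturbed with probability only $O(\mathrm{polylog}(n)/n)$, bounding the covariance of such a pair by $O(\mathrm{polylog}(n)/n)$; the at most $O(n^3\,\mathrm{polylog}(n))$ exceptional pairs contribute at most $1$ each. Altogether $\mathrm{Var}(N_s)=O(n^3\,\mathrm{polylog}(n))$, and Chebyshev with $\mathbb{E}N_s=\Theta(n^2)$ gives $N_s=(1+o(1))\mathbb{E}N_s$ with probability $1-O(\mathrm{polylog}(n)/n)$. Closing the gap to the stated $1-O(n^{-1})$ means either sharpening these covariance estimates (the signed perturbation above ought to gain a further factor $n^{-1}$ after averaging) or replacing Chebyshev by a bounded-differences/Talagrand concentration inequality applied to the small-component exploration, for which one checks that toggling one edge of $\Gamma$ changes $N_s$ by $\mathrm{polylog}(n)$ off an event of probability $o(n^{-2})$.

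The hard part will be this last step. The dependency structure of $T_1(\Gamma)$ is genuinely awkward: a single edge of $\Gamma$ lies in as many as $\Theta(\mathrm{polylog}(n))$ potential induced squares, so altering one edge can change many edges of $T_1(\Gamma)$ at once, and merging or splitting of small components can cascade. The whole argument therefore hinges on organising the exploration of $C(f)$ so that the event ``$|C(f)|<(\log n)^4$'' carries very little information about $\Gamma$ outside a $\mathrm{polylog}(n)$-sized neighbourhood, which is what makes the near-independence of distant explorations --- and hence the variance bound at the required polynomial rate --- go through. A secondary, more conceptual point in the first step is identifying the correct criticality parameter of the square-component exploration (the computation producing the constant $\sqrt{\sqrt 6-2}$); I would take this as given from \cite{behrstock2022square}.
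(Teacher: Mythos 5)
This proposition is quoted verbatim from \cite[Corollary 6.3]{behrstock2022square}; the present paper contains no proof of it and only remarks in passing that the cited argument extends from $C=5$ to an arbitrary constant $C$. There is therefore no in-paper proof to compare against. Assessing your argument on its own merits, the overall shape --- a branching-process exploration to establish $\mathbb{E}N_v=\Omega(n^2)$, the reduction to the complementary count $N_s$ via the identity $N_v+N_s=|E(\overline{\Gamma})|$ (the latter being sharply concentrated by Chernoff), and then a second-moment bound for $N_s$ --- is plausible and in the spirit of the arguments in \cite{behrstock2022square}. Taking the criticality constant $\sqrt{\sqrt{6}-2}$ and the supercriticality of the associated offspring law as given from that paper is also reasonable.

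The genuine gap is in the final step, and you flag it yourself. Your variance bound $\mathrm{Var}(N_s)=O\bigl(n^3\,\mathrm{polylog}(n)\bigr)$ combined with Chebyshev only yields failure probability $O\bigl(\mathrm{polylog}(n)/n\bigr)$, which does not meet the stated $O(n^{-1})$. Neither of your two suggested remedies is carried out, and both face real obstacles. For the signed-cancellation route, you would need to exhibit a genuine sign structure in the covariances, not merely the one-sided bound $O(q)$ coming from the perturbation probability $q$. For the typical-Lipschitz route, the claim that toggling a single edge of $\Gamma$ changes $N_s$ by only $\mathrm{polylog}(n)$ off an event of probability $o(n^{-2})$ is far from clear: adding an edge $uv$ deletes the vertex $uv$ from $T_1(\Gamma)$ and simultaneously destroys every induced square having $uv$ as a side (of which there are $\Theta(n^2p^4)$), which can disconnect a large $T_1(\Gamma)$-component into many small pieces and shift $N_s$ by a polynomial-in-$n$ amount; deleting an edge can cascade similarly. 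You correctly identify this awkward dependency structure as the crux, but the proposal leaves precisely that step unresolved, so as written it proves a weaker failure bound than the one asserted.
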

\noindent (This was formally stated with $C=5$ in~\cite[Corollary 6.3]{behrstock2022square}, but the proof works for any choice of the constant $C>0$ in the upper bound on $p(n)$.)

The second result concerns vertex-sprinkling; it allows a number of 
somewhat-large square components to connect up into a giant square component.
\begin{proposition}[{Sprinkling lemma~\cite[Lemma 6.6]{behrstock2022square}}]\label{prop: sprinkling lemma}
Suppose we have a bipartition of $V(\Gamma)$ into $V_1\sqcup V_2$ such that:
\begin{enumerate}[(i)]
	\item $\vert V_2\vert =\Omega(n)$;
	\item there are $\Omega(n^2)$ pairs from $E(\overline{\Gamma[V_1]})$ that lie in components of $T_1(\Gamma[V_1])$ of size at least $(\log n)^4$;
	\item $\Omega\left(\frac{1}{\sqrt{n}}\right) \leq p(n) \leq O\left(\left(\frac{\log n}{n}\right)^{\frac{1}{3}}\right)$.
\end{enumerate}
Then the probability that there exists a square component $C$ in $T_1(\Gamma)$ containing all the pairs from $E(\overline{\Gamma[V_1]})$ that lie in components of $T_1(\Gamma[V_1])$ of size at least $(\log n)^4$ is at least
\[1- \exp \left(-\Omega\left((\log n)^2\right)\right).\]
\end{proposition}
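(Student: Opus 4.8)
The plan is to run a two-round exposure argument. Condition first on $\Gamma[V_1]$, so that the collection $\mathcal{L}$ of "large" components of $T_1(\Gamma[V_1])$ — those of size at least $(\log n)^4$ — is fixed, together with the $\Omega(n^2)$ non-edges of $\overline{\Gamma[V_1]}$ that they contain. Call these components $C_1,\dots,C_k$. Since each $C_i$ has at least $(\log n)^4$ vertices in $T_1(\Gamma[V_1])$, its support in $V_1$ is of size $\Omega((\log n)^2)$ by Proposition~\ref{prop: combinatorial characterisation of thickness} (a component of $T_1$ with $m$ vertices is supported on a graph with $\geq 2s-4$ edges where $s$ is the support size, hence $s = \Omega(\sqrt m)$); more simply, a component on $m$ non-edges has support $\Omega(\sqrt m)$. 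The idea is then to use the vertices of $V_2$, and the edges between $V_2$ and $V_1$ (which are still unexposed), to weld all of $C_1,\dots,C_k$ into a single square component of $T_1(\Gamma)$.

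The mechanism for welding is the following. Pick a single auxiliary vertex $w\in V_2$. For each large component $C_i$, fix two non-adjacent vertices $a_i,b_i$ in its support lying in a common induced square of $\Gamma[V_1]$ (so $a_ib_i$ is a diagonal realized inside $C_i$). If $w$ is non-adjacent to both $a_i$ and $b_i$ in $\Gamma$, and there is some vertex $c$ adjacent to both $a_i$ and $b_i$ and to $w$... — more robustly, the cleanest welding move is: if $w$ is joined (in $\Gamma$) to exactly one endpoint of the diagonal, i.e.\ $wa_i\in E(\Gamma)$ but $wb_i\notin E(\Gamma)$, and similarly there is a path through $w$ creating a new induced square $a_i x a_{i} w$... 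I would instead use the standard trick from~\cite{behrstock2022square}: show that for each $i$, with probability bounded below by a constant, the vertex $w$ together with the edges from $w$ to $\mathrm{supp}(C_i)$ creates an induced square of $\Gamma$ one of whose diagonals is a non-edge lying in $C_i$ and the other of whose diagonals involves $w$; then the non-edge at $w$ is a vertex of $T_1(\Gamma)$ adjacent to a vertex of $C_i$, so $C_i$ and the $w$-component merge. Doing this for a small set of sprinkling vertices $W\subseteq V_2$ (say $|W| = \Theta(n)$), one gets that each $C_i$ is a.a.s.\ attached to the "$W$-hub", and, crucially, the $W$-hub itself is connected because any two sprinkling vertices $w,w'$ that each attach to a common large $C_i$ land in the same component. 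A short union-bound/second-moment step over $W$ then makes the probability of failure for a fixed $C_i$ at most $\exp(-\Omega(n / \operatorname{poly}))$; but since $k$ can be as large as $\Theta(n^2)$, we need the per-component failure probability to beat $n^{-2}$ with room to spare — this is why the conclusion is only $1-\exp(-\Omega((\log n)^2))$ rather than something stronger, and it forces us to route the argument through the support size $\Omega((\log n)^2)$: for a fixed $C_i$, the probability that \emph{none} of the $\Omega(n)$ sprinkling vertices creates a welding square using the $\Omega((\log n)^2)$ support vertices of $C_i$ is, by Janson's inequality (Proposition~\ref{proposition: Janson inequality}), at most $\exp(-\mu + \Delta/2)$ with $\mu = \Omega((\log n)^2)$ after a computation, and one checks $\Delta = o(\mu)$ in the stated $p$-range $\Omega(n^{-1/2}) \le p \le O((\log n / n)^{1/3})$, giving per-component failure $\exp(-\Omega((\log n)^2))$. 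Since $k \le \binom n2 = \operatorname{poly}(n)$ and $\operatorname{poly}(n)\cdot \exp(-\Omega((\log n)^2)) = \exp(-\Omega((\log n)^2))$, the union bound over all large components closes the argument.

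In more detail, the key steps in order are: (1)~condition on $\Gamma[V_1]$ and fix the large components $C_1,\dots,C_k$ and, for each, a witnessing induced square with diagonal $a_ib_i$; (2)~observe $|\mathrm{supp}(C_i)| = \Omega((\log n)^2)$; (3)~for a fixed sprinkling vertex $w\in V_2$ and fixed $C_i$, identify an explicit family $\mathcal{F}_{i,w}$ of "welding configurations" (each a small set of potential $\Gamma$-edges between $w$ and $\mathrm{supp}(C_i)$, together with possibly one extra vertex, whose presence/absence creates an induced square with one diagonal a non-edge of $C_i$ and the other a non-edge at $w$) and lower-bound $\mathbb{P}(\text{some configuration in }\mathcal F_{i,w}\text{ occurs})$ by a constant using that $|\mathrm{supp}(C_i)|$ is large and $p$ lies in the given range; (4)~aggregate over $w\in W$ with $|W|=\Theta(n)$: since the edges from distinct $w$'s to $V_1$ are independent, $\mathbb{P}(C_i\text{ not welded to the hub}) \le (1-c)^{\Theta(n)}$, and — being slightly more careful because different $w$ may attach $C_i$ to \emph{different} parts of the hub — use Janson to get the sharp $\exp(-\Omega((\log n)^2))$ bound; (5)~argue the hub is itself connected (two sprinkling vertices attached to the same $C_i$ merge, and the $C_i$'s chain them together); (6)~union bound over $i\in[k]$, $k=\operatorname{poly}(n)$. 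The main obstacle is Step~(3)–(4): the welding squares at a given $w$ are \emph{not} independent (they overlap in the edges at $w$), so the naive product bound is not literally valid, and one must either set up a clean disjointification (partition $\mathrm{supp}(C_i)$ into blocks and use one potential square per block) or invoke Janson's inequality with a careful estimate of $\Delta/\mu$ in the regime $p \le O((\log n/n)^{1/3})$ — this upper bound on $p$ is precisely what guarantees $\Delta = o(\mu)$, so that the failure probability is genuinely $\exp(-(1-o(1))\mu) = \exp(-\Omega((\log n)^2))$. The remaining steps are routine, modulo the non-monotonicity caveats (Section~\ref{subsection: nonmonotonicity}): one must check that adding the sprinkling edges does not destroy the large components $C_i$ inside $\Gamma[V_1]$ — but this is automatic, since the $C_i$ live on $V_1$ and only edges touching $V_2$ are being exposed in the second round, so no non-edge of $\overline{\Gamma[V_1]}$ and no induced square of $\Gamma[V_1]$ is affected.
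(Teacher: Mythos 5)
The paper does not actually prove this proposition: it is quoted verbatim from \cite[Lemma~6.6]{behrstock2022square}, and the only remark the present paper adds is a parenthetical observation that the upper bound on $p$ in the original statement can be relaxed from $O(\sqrt{(\log n)/n})$ to $O(((\log n)/n)^{1/3})$ because the underlying connecting lemma \cite[Lemma~6.4]{behrstock2022square} tolerates the weaker hypothesis. So there is no ``paper's own proof'' to compare against; what I can assess is whether your reconstruction is sound as a proof of the cited lemma.

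Your high-level architecture --- condition on $\Gamma[V_1]$ to freeze the large components $C_1,\dots,C_k$, note that each has support of size $\Omega((\log n)^2)$ (correct: $|C_i|\le\binom{|\mathrm{supp}(C_i)|}{2}$, so support is $\Omega(\sqrt{|C_i|})$), weld each $C_i$ to a hub built on $V_2$ via Janson, check hub connectivity, and union-bound over the $O(n^2/(\log n)^4)$ large components --- is the right shape and is consistent with how \cite{behrstock2022square} argues. Your observations that welding cannot destroy any $C_i$ (since only edges touching $V_2$ are being revealed) and that the welding configurations at a single sprinkling vertex are not independent (hence the need for Janson rather than a naive product bound) are both correct and important.

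There are two places where the plan does not yet close. First, the welding configuration is never pinned down, and your first attempt at it (``if $w$ is non-adjacent to both $a_i$ and $b_i$\ldots if $w$ is joined to exactly one endpoint\ldots'') does not produce a new induced square at all. A square has two non-edge diagonals; to attach a non-edge $f=x_1x_2\in C_i$ to something fresh you need a \emph{pair} of vertices (say $w_1,w_2\in V_2$, or $w\in V_2$ together with some $u$) forming a non-edge $g$ of $\Gamma$, with all four cross-edges between $f$ and $g$ present. You gesture at this (``together with possibly one extra vertex\ldots the other a non-edge at $w$'') but never commit, and the choice affects both the independence structure across different $w$'s in step~(4) and the hub-connectivity argument in step~(5). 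Second, and more importantly, the entire quantitative content of the lemma is carried by your asserted $\mu=\Omega((\log n)^2)$ and $\Delta=o(\mu)$; these are stated as the output of ``a computation'' that is not performed. In fact, once the configurations are pinned down, the dependency structure in $\Delta$ (pairs sharing a sprinkling non-edge, pairs sharing a single sprinkling vertex together with a vertex of $f$, etc.) requires a case analysis, and establishing the stated $p$-range is precisely the point of the extension noted in the paper. As a plan your outline is plausible and matches the provenance, but as a proof it defers exactly the steps that need checking.
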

\noindent (Formally, this was stated with an upper bound of
$p(n)=O\left(\sqrt{(\log n)/n}\right)$, but the proof of the
connecting lemma~\cite[Lemma 6.4]{behrstock2022square}, which is the
main tool in the proof of~\cite[Lemma 6.6]{behrstock2022square},
carries over when we have the weaker upper bound $p(n) \leq
O\left(\sqrt[3]{(\log n)/n}\right)$ without any change, as does the
remainder of the proof of~\cite[Lemma 6.6]{behrstock2022square}.)

We shall need an additional result that is somewhat reminiscent of the connecting lemma~\cite[Lemma 6.4]{behrstock2022square}, and is proved in an analogous manner via Janson's inequality.
\begin{proposition}[Giant-connecting lemma]\label{prop: connecting lemma}
Let $\Gamma~\sim\mathcal{G}_{n,p}$, with $p=p(n)$ satisfying
\[ \frac{1}{\sqrt{n}} \ll p(n)\leq 100\left(\frac{\log n}{n}\right)^{\frac{1}{3}}.\]
Let $M=M(n)$ be such that $1\ll M\ll p^{-1}$. Suppose $W_1\sqcup W_2$ is a partition of $V(\Gamma)$ such that $\vert W_1\vert = n/M$, and there are $\Omega(n^2)$ non-edges of $W_2$ that lie in the same (giant) component $C_2$ of $T_1(\Gamma[W_2])$. Then the
probability that there exists some component $C_1$ of $T_1(\Gamma[W_1])$ of size at least $M$ such that  $C_1$ and $C_2$ are not a subset of the same component in $T_1(\Gamma)$ is at most
\begin{align*}
\frac{n^2}{M^3}\exp\left(-\Omega\left(Mnp^2\right)\right). 
\end{align*}
Further, if there exist two distinct components $C_2$ and $C_2'$ of $T_1(\Gamma[W_2])$ of size $\Omega(n^2)$, then with probability $1-\exp(-\Omega(n^2p^2))$ they are a subset of the same component in $T_1(\Gamma)$.
\end{proposition}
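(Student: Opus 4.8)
The plan is to deduce both statements from Janson's inequality (Proposition~\ref{proposition: Janson inequality}), applied to carefully chosen families of \emph{crossing squares}. In both cases I first expose $\Gamma[W_1]$ and $\Gamma[W_2]$: this determines all components $C_1$ of $T_1(\Gamma[W_1])$ and the giant components $C_2,C_2'$ of $T_1(\Gamma[W_2])$, and we record the basic fact that $T_1(\Gamma[W_i])$ is a subgraph of $T_1(\Gamma)$, so each such component lies in a single component of $T_1(\Gamma)$. I then treat the $|W_1|\,|W_2|$ pairs between $W_1$ and $W_2$ as the remaining randomness, each being an edge of $\Gamma$ independently with probability $p$; recall $|W_2|=(1-o(1))n$, and that $Mp=o(1)$ and $M\ll p^{-1}\ll \sqrt n$ by hypothesis.

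For the first statement, fix $\Gamma[W_1],\Gamma[W_2]$ and a component $C_1$ of $T_1(\Gamma[W_1])$ with $|C_1|\ge M$. For a non-edge $ab\in C_1$ and a non-edge $xy\in C_2$, put $F_{ab,xy}=\{ax,bx,ay,by\}$; if all four of these pairs are edges of $\Gamma$, then $\{a,b,x,y\}$ induces a $4$-cycle with diagonals $ab$ and $xy$, so $ab$ and $xy$ are adjacent in $T_1(\Gamma)$ and $C_1,C_2$ lie in a common component of $T_1(\Gamma)$. Hence, writing $\mathcal{F}=\{F_{ab,xy}:ab\in C_1,\ xy\in C_2\}$ and applying Janson's inequality to the $p$-random subset of bipartite pairs,
\begin{align*}
\mathbb{P}\bigl(C_1\not\sim C_2\text{ in }T_1(\Gamma)\bigr)\le \mathbb{P}(I_{\mathcal{F}}=0)\le \exp\!\left(-\frac{\mu^2}{2\Delta}\right),
\end{align*}
where $\mu=|C_1|\,|C_2|\,p^4=\Omega(Mn^2p^4)$. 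Two sets $F_{ab,xy}$ and $F_{a'b',x'y'}$ meet precisely when $\{a,b\}$ and $\{a',b'\}$ share a vertex \emph{and} $\{x,y\}$ and $\{x',y'\}$ share a vertex; if these overlaps have sizes $i,j\in\{1,2\}$ (with $(i,j)\ne(2,2)$, as $F\ne F'$), the two sets share exactly $ij$ bipartite pairs and contribute $p^{8-ij}$, which gives
\begin{align*}
\Delta=|C_1|\,N_1(C_2)\,p^6+N_1(C_1)\,|C_2|\,p^6+N_1(C_1)\,N_1(C_2)\,p^7,
\end{align*}
where $N_1(C)$ denotes the number of ordered pairs of distinct non-edges of $C$ sharing a vertex. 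Using only the trivial bounds $N_1(C_1)\le |C_1|(|C_1|-1)\le |C_1|^2$ and $N_1(C_2)\le 2n|C_2|$, together with $|C_1|\ge M$, $|C_2|=\Omega(n^2)$, $Mp=o(1)$ and $M=o(n)$, one checks directly that each of the three summands of $\Delta$ is $O\bigl(\mu^2/(Mnp^2)\bigr)$; hence $\mu^2/\Delta=\Omega(Mnp^2)$ and $\mathbb{P}(C_1\not\sim C_2)\le \exp(-\Omega(Mnp^2))$, uniformly over all such $C_1$. Finally, $\Gamma[W_1]\sim \mathcal{G}_{n/M,p}$ has at most $\binom{n/M}{2}\le n^2/(2M^2)$ non-edges, hence at most $n^2/(2M^3)$ components of size at least $M$; a union bound over these yields the asserted bound $\tfrac{n^2}{M^3}\exp(-\Omega(Mnp^2))$.

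For the second statement I use a non-edge $ab$ of $\Gamma[W_1]$ as a \emph{pivot}: if $\{a,b,x,y\}$ and $\{a,b,x',y'\}$ both induce $4$-cycles for some $xy\in C_2$ and $x'y'\in C_2'$, then $ab$ is adjacent in $T_1(\Gamma)$ to both $C_2$ and $C_2'$, forcing them into a common component. I therefore apply Janson's inequality to the family indexed by triples $(ab,xy,x'y')$ with $ab$ a non-edge of $\Gamma[W_1]$, $xy\in C_2$ and $x'y'\in C_2'$, with pair-set $\{ax,bx,ay,by\}\cup\{ax',bx',ay',by'\}$ (of size $8$ when $x,y,x',y'$ are distinct). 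On the likely event that $\Gamma[W_1]$ has at least $\tfrac12\binom{n/M}{2}$ non-edges---whose complement has probability $\exp(-\Omega(n^2/M^2))\le \exp(-\Omega(n^2p^2))$ since $Mp=o(1)$---one gets $\mu\asymp e\bigl(\overline{\Gamma[W_1]}\bigr)\,|C_2|\,|C_2'|\,p^8=\Omega(n^6p^8/M^2)$, and the same style of trivial codegree estimates for $\Delta$ give $\mu^2/\Delta=\Omega(n^2p^2)$, hence the claimed bound $\exp(-\Omega(n^2p^2))$.

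The crux---and the step needing the most care---is the estimate for $\Delta$ in the first part. The cruder bound $N_1(C_1)\le 2|C_1|\,|W_1|=2|C_1|n/M$ makes the $p^7$ term $N_1(C_1)N_1(C_2)p^7$ dominant and only gives $\mathbb{P}(C_1\not\sim C_2)\le \exp(-\Omega(M^2p))$, which is \emph{strictly weaker} than claimed (since $M\ll p^{-1}\ll np$, one has $M^2p\ll Mnp^2$). Exploiting instead that $N_1(C_1)$ cannot exceed the total number of ordered pairs of non-edges of $C_1$, i.e.\ $N_1(C_1)\le |C_1|(|C_1|-1)$, in the $p^7$ term (and the second $p^6$ term) is precisely what restores the bound $\exp(-\Omega(Mnp^2))$; the remaining verifications are routine.
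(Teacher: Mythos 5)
Your argument is correct and follows essentially the same route as the paper's: expose $\Gamma[W_1]$ and $\Gamma[W_2]$, apply Janson's inequality to the family of crossing squares between a component $C_1$ and $C_2$ (and, for the `further' part, to the family of square-pairs through a pivot non-edge $ab\subseteq W_1$), and finish with the $n^2/(2M^3)$ union bound and the Chernoff estimate on $e(\overline{\Gamma[W_1]})$. The only notable stylistic difference is in how the $\Delta$-estimate is organized: the paper bounds the number of $x'_1x'_2\in C_1$ with $X=1$ by $M$ (implicitly restricting to an $M$-subset of $C_1$), whereas you keep all of $C_1$ and use $N_1(C_1)\le |C_1|^2$; both give $\mu^2/\Delta=\Omega(Mnp^2)$ uniformly over all admissible $C_1$.
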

\begin{proof}
Let $U$ denote the complete bipartite graph with bipartition $W_1\sqcup W_2$. Clearly, the collection of edges of $\Gamma$ from $W_1$ to $W_2$ is distributed as a $p$-random subset of $U$.

Given a component $C_1$ of $T_1(\Gamma[W_1])$ of size at least $M$, there are $\Omega(n^2M)$ pairs $\{x_1x_2, y_1y_2\}$ with $x_1x_2\in C_1$, $y_1y_2\in C_2$. For each such pair, we define  $F_{x_1x_2, y_1y_2}$ to be the event that $x_iy_j\in E(\Gamma)$ for every $i,j\in [2]$, i.e.,  that $x_1x_2y_1y_2$ induces a square of $\Gamma$ joining $C_1$ to $C_2$ in $T_1(\Gamma)$. The probability of this event is $p^4$, and we are thus in a position to apply Janson's inequality.

We have $\mu =\Omega(n^2Mp^4)$. To compute the parameter $\Delta$, note that $F:=F_{x_1x_2, y_1y_2}$ and $F':=F_{x'_1x'_2, y'_1y'_2}$ are independent unless $X:=\vert \{x_1,x_2\}\cap \{x'_1, x'_2\}\vert $ and $Y:=\vert \{y_1,y_2\}\cap \{y'_1, y'_2\}\vert $ are both strictly positive. If $(X,Y)=(2,2)$ then $F=F'$. Further, note that $\mathbb{E}I_FI_{F'}$ is equal to $p^7$ if $(X,Y)=(1,1)$ and to $p^6$ if $(X,Y)=(2,1)$ or $(1,2)$.

Now for $\{x_1x_2, y_1y_2\}$ fixed we have $O(n)$ ways of picking $y'_1y'_2 \in C_2$ such that $ Y=1$ and at most $M$ ways of picking $x'_1x'_2\in C_1$ such that $ X =1$. Going over all possibilities $(X, Y)\in \{(1,1), (2,1), (1,2)\}$, calculating $I_FI_{F'}$ in each case and using our assumption that $M\ll p^{-1}$, we see that
\[\Delta = O\left(\mu p^2n \right).\] 
It then follows from Proposition~\ref{proposition: Janson inequality}, Janson's inequality, together with linearity of expectation and Markov's inequality that the probability that one of the at most $n^2/(2M^3)$ choices of a square component $C_1$ in $T_1(\Gamma[W_1])$ with size at least $M$ that does not join up to $C_2$ in $T_1(\Gamma)$ is at most 
\[\frac{n^2}{2M^3}\exp\left(-\Omega\left(\frac{\mu}{p^2n}\right)\right)= \frac{n^2}{M^3}\exp\left(-\Omega\left(Mp^2n\right)\right).\]

It remains to show the `Further' part of the proposition. Let $\mathcal{B}$ be the event that there are fewer than $\frac{n^2}{4M^2}$ non-edges in $\Gamma[W_1]$. Applying the Chernoff bound (Proposition~\ref{prop: Chernoff}), we have 
$\mathbb{P}(\mathcal{B}) = \exp \left(-\Omega(n^2/M^2)\right)$, which by our bounds on $p$ and $M$ is at most $\exp\left(-\Omega(n^2p^2)\right)$.

Suppose the event $\mathcal{B}$ does not occur. Now, note that there are at most $O(1)$ distinct giant square components in $T_1(\Gamma[W_2])$. For any such pair of giants $C_2,C_2'$ and any $x_1x_2\in C_2$, $y_1y_2\in C_2'$ with $\{x_1,x_2\}\cap \{y_1,y_2\}=\emptyset$ and any non-edge $z_1z_2\in E(\overline{\Gamma[W_1]})$, let $F_{x_1x_2, y_1y_2,z_1z_2}$ be the event that $x_iz_j$ and $y_iz_j$ are edges of $\Gamma$ for every $i,j\in [2]$ (and in particular that $C_2$ and $C_2'$ join up in $T_1(\Gamma)$ through two induced squares both having $z_1z_2$ as a diagonal). Clearly the probability of this event is $p^8$.  Conditional on $\mathcal{B}$ not occurring, there are $\Omega(n^6/M^2)$ such triples $x_1x_2, y_1y_2, z_1z_2$. Here again we can apply Janson's inequality with parameters $\mu= \Omega(p^8n^6/M^2)$ and $\Delta=O\left(\mu n^3p^6/M\right)$ to deduce that the probability that $C_2'$ and $C_2$ fail to connect in $T_1(\Gamma)$ through a $F_{x_1x_2, y_1y_2,z_1z_2}$-event when we reveal the edges from $W_1$ to $W_2$ is at most 
\[\exp\left(-\mu^2/2\Delta\right)=\exp(-\Omega(n^3p^3)),\]
where in the last inequality we have used our assumption that $M\ll p^{-1}$. Since as noted above there can be only $O(1)$ pairs of distinct giants $C_2, C_2'$ in $T_1(\Gamma[W_2])$, it follows from Markov's inequality that the probability any two of them fail to join up in $T_1(\Gamma[W])$ is at most $\mathbb{P}(\mathcal{B}) + O(1)\cdot \exp(-\Omega(n^3p^3)) = \exp(-\Omega(n^2p^2))$, as claimed.
\end{proof}

\section{Connectivity in square percolation}\label{section: main} 
\subsection{Proof outline}
Our proof of Susse's conjecture on the location of the connectivity threshold for $S(\Gamma)$ (and $T_1(\Gamma)$), $\Gamma\sim\mathcal{G}_{n,p}$ is structured as follows.

We first show that for `large' $p$ (which for us means
$p=\Omega\left((\log (n)/n)^{\frac{1}{3}}\right)$, $T_1(\Gamma)$ is
connected (Proposition~\ref{prop: connectivity above n to the -1/3}).
We then concentrate on values of $p=p(n)$ which are not `large', but
which are a little above the threshold for the emergence of a giant
component in $T_1(\Gamma)$ (which occurs at $p\approx
\sqrt{\sqrt{6}-2}/\sqrt{n}$).  For $p$ in this range we prove
Theorem~\ref{theorem: second component}, showing that a.a.s.\ the
giant square component is unique and that all other square components
are small, with size $O((\log n)^2)$.  For this part of the argument,
we rely on a partial converse to Propositions~\ref{prop: combinatorial
characterisation of thickness} (Lemma~\ref{lemma: bound on number of
diag in order 1 component}), the tools from~\cite{behrstock2022square}
we discussed in Section~\ref{section: preliminaries}, and somewhat
intricate partitioning arguments.

Finally, we use Lemma~\ref{lemma: bound on number of diag in order 1 component}, Proposition~\ref{prop: combinatorial characterisation of thickness} and Janson's inequality to rule out the existence of small square components of size $O((\log n)^2)$ for $p$ just above Susse's conjectured connectivity threshold, thereby completing the proof of Theorem~\ref{theorem: connectivity}.
\subsection{One square component to rule them all: proof of Theorem~\ref{theorem: second component}}\label{section: uniqueness of the giant}
We begin with a series of propositions showing that for large $p$, the a.a.s.\ existence of a unique giant component is essentially trivial.
\begin{proposition}\label{prop: large p connectivity}
	Let $\Gamma\sim\mathcal{G}_{n,p}$. If $p=p(n)$ satisfies 
	\[ 100\left(\frac{\log n}{n}\right)^{\frac{1}{4}} \leq p(n)\leq 
	1-\omega(n^{-2}),\]
	then a.a.s.\ $T_1(\Gamma)$ has diameter at most two and in particular is connected.
\end{proposition}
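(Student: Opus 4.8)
The plan is to prove the stronger statement that a.a.s.\ any two vertices of $T_1(\Gamma)$ --- that is, any two non-edges of $\Gamma$ --- lie at distance at most $2$ in $T_1(\Gamma)$. Since $\mathbb{E}|E(\overline{\Gamma})|=\binom n2(1-p)\to\infty$ throughout our range of $p$, a.a.s.\ $T_1(\Gamma)$ is nonempty, so this yields the stated diameter bound (with the convention that a graph on at most one vertex has diameter $0$). Fix non-edges $f,f'\in E(\overline{\Gamma})$ and set $S:=\bigcap_{v\in f\cup f'}N_\Gamma(v)$. The key observation is that \emph{if $f$ and $f'$ are not adjacent in $T_1(\Gamma)$, then every non-edge $g\in E(\overline{\Gamma})$ with $g\subseteq S$ is a common neighbour of $f$ and of $f'$ in $T_1(\Gamma)$}: both endpoints of such a $g$ are adjacent in $\Gamma$ to every vertex of $f\cup f'$, so $f\cup g$ and $g\cup f'$ each induce a $4$--cycle (with $f,g$, resp.\ $g,f'$, as diagonals). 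Hence the ``bad event'' $B_{f,f'}:=\{d_{T_1(\Gamma)}(f,f')>2\}$ is contained in $A_{f,f'}:=\{|S|\le 1\}\cup\{\Gamma[S]\text{ is complete}\}$. Because $S$ is disjoint from $f\cup f'$, the event $A_{f,f'}$ is determined by edges of $\Gamma$ other than the two pairs $f,f'$; in particular it is independent of $\{f,f'\in E(\overline{\Gamma})\}$, and in bounding $\mathbb{P}(A_{f,f'})$ we may first reveal the edges between $f\cup f'$ and its complement (this determines $S$) and only afterwards reveal $\Gamma[S]$, which is then distributed as $\mathcal{G}_{|S|,p}$.

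Put $s_0:=\tfrac12(n-4)p^4$, and note that $|S|$ is a sum of at least $n-4$ i.i.d.\ Bernoulli variables of mean at least $p^4$. The Chernoff bound (Proposition~\ref{prop: Chernoff}) gives $\mathbb{P}(|S|<s_0)\le\exp(-\Omega(np^4))$, while conditionally on $|S|\ge s_0$ we have $\mathbb{P}(\Gamma[S]\text{ complete})\le p^{\binom{s_0}{2}}\le\exp\bigl(-(1-p)\binom{s_0}{2}\bigr)$, using $\ln p\le p-1$. Now $p\ge100(\log n/n)^{1/4}$ forces $np^4\ge100^4\log n$, so $s_0\to\infty$ and $\mathbb{P}(|S|<s_0)\le n^{-K}$ for $K$ as large as we wish; the same hypothesis gives $n^2p^8\ge100^8(\log n)^2$, and since $\binom{s_0}{2}=\Omega(n^2p^8)$ we get $(1-p)\binom{s_0}{2}=\Omega\bigl((\log n)^2\bigr)$ if $p\le\tfrac12$, whereas if $p\ge\tfrac12$ then $p^8\ge2^{-8}$ and so $(1-p)\binom{s_0}{2}=\Omega\bigl((1-p)n^2\bigr)$. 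The same estimates apply when $f\cap f'\neq\emptyset$: then $f,f'$ are automatically non-adjacent in $T_1(\Gamma)$, and $S$ is an intersection of only three neighbourhoods, hence at least as large. In all cases, uniformly over ordered pairs of distinct non-edges, $\mathbb{P}(A_{f,f'})\le n^{-K}+\exp\bigl(-(1-p)\binom{s_0}{2}\bigr)$.

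It remains to union bound, which is the one delicate point. Writing $m:=\binom n2(1-p)=\mathbb{E}|E(\overline{\Gamma})|$ and using the independence noted above,
\[\mathbb{P}\bigl(\exists\,f\neq f'\in E(\overline{\Gamma})\text{ with }d_{T_1(\Gamma)}(f,f')>2\bigr)\ \le\ \sum_{f\neq f'}(1-p)^2\,\mathbb{P}(A_{f,f'})\ \le\ m^2\Bigl(n^{-K}+\exp\bigl(-(1-p)\tbinom{s_0}{2}\bigr)\Bigr).\]
It is essential here to keep the factor $m^2$, rather than crudely replacing the number of pairs of non-edges by $n^4$: when $1-p$ is only just $\omega(n^{-2})$ there are merely $t:=m=\omega(1)$ non-edges in expectation, and it is exactly the factor $t^2$ that lets $t^2\exp(-\Omega(t))$ tend to $0$. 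Concretely, if $p\le\tfrac12$ then $m^2\le n^4$ while $\exp\bigl(-(1-p)\binom{s_0}{2}\bigr)\le\exp(-\Omega((\log n)^2))=n^{-\omega(1)}$, so the right-hand side is $o(1)$; and if $p\ge\tfrac12$ then $m=\Theta((1-p)n^2)=\Theta(t)$ with $t\to\infty$ and $(1-p)\binom{s_0}{2}=\Omega(t)$, so the right-hand side is $O(t^2n^{-K})+O(t^2e^{-\Omega(t)})=o(1)$. Hence a.a.s.\ no bad pair exists, proving the proposition. The genuinely delicate regime is this last one, $1-p$ small but $\omega(n^{-2})$ (so $p\to1$): there the naive union bound fails, and one must both restrict the count to genuine non-edges \emph{and} retain the true size $|S|=\Theta(np^4)$ (which is $\approx n/2$ here, not merely $\Omega(\log n)$), so that $\exp\bigl(-(1-p)\binom{s_0}{2}\bigr)\le\exp\bigl(-\Omega((1-p)n^2)\bigr)$ is small enough to absorb $m^2$.
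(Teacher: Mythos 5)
Your proof is correct, and it takes a genuinely different route from the paper's. The paper splits into two cases: for $1-p\ge 100/n$ it uses the argument you use (Chernoff on the common neighbourhood of $f\cup f'$, plus a first-moment bound showing $\Gamma$ has no clique of order $\lceil p^4n/4\rceil$), but with a crude union bound over all $\binom n2^2=O(n^4)$ pairs; and for the tail $\omega(n^{-2})\le 1-p\le 100/n$, it abandons this and instead invokes the classical Erd\H{o}s--R\'enyi connectivity threshold to argue that $\overline\Gamma\sim\mathcal{G}_{n,1-p}$ is a.a.s.\ disconnected with at least two non-trivial components, so that non-edges from distinct components of $\overline\Gamma$ are automatically $T_1$-adjacent. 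Your argument unifies the two regimes with a single estimate: the crucial observation that $A_{f,f'}$ is determined by edges other than $f,f'$ themselves lets you pay the factor $(1-p)^2$ in the union bound, reducing the effective number of pairs from $n^4$ to $m^2=\bigl(\binom n2(1-p)\bigr)^2$, which is exactly what the exponential $\exp\bigl(-(1-p)\binom{s_0}{2}\bigr)=\exp(-\Omega((1-p)n^2))$ can absorb even when $1-p$ is only barely $\omega(n^{-2})$. This buys a self-contained proof that avoids the appeal to the Erd\H{o}s--R\'enyi connectivity theorem and the second-moment argument for existence of two non-trivial complement components, at the modest cost of tracking the independence structure more carefully. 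The small computational case split $p\lessgtr 1/2$ at the end of your proof is a bookkeeping device, not a structural dichotomy, in contrast to the paper's case split which deploys different machinery in each regime.

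Two small remarks. First, you may wish to make explicit that $S\cap(f\cup f')=\emptyset$ because a vertex $v\in f$ is not adjacent (in $\Gamma$) to the other endpoint of $f$ (as $f$ is a non-edge), hence $v\notin S$; similarly for $f'$. This is what licenses both the claim that $g\subseteq S$ is disjoint from $f$ and $f'$ (so $f\cup g$ and $g\cup f'$ really are $4$--sets) and the independence of $A_{f,f'}$ from the edge-status of the pairs $f,f'$. Second, your phrasing ``$|S|$ is a sum of at least $n-4$ i.i.d.\ Bernoulli variables of mean at least $p^4$'' is best read as the stochastic-domination statement $|S|\succeq\mathrm{Binomial}(n-4,p^4)$ (which is how the paper phrases it), since when $|f\cup f'|=3$ the summands have mean $p^3$, not $p^4$. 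Neither point affects the correctness of the argument.
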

\begin{proof}
	\noindent \textbf{Case 1:} $\mathbf{1-p\geq 100/n}$\textbf{.} Let
	$x_1x_2$ and $y_1y_2$ be distinct (but not necessarily disjoint)
	pairs of vertices from $V(\Gamma)$; the size of the common neighbourhood of
	$\{x_1,x_2,y_1,y_2\}$ in $\Gamma$ stochastically dominates a
	$\mathrm{Binomial}(n-4, p^4)$ random variable.  Applying a
	Chernoff bound (Proposition~\ref{prop: Chernoff}) to bound the
	probability of the pairs having fewer than $p^4n/2$ neighbours in
	common and taking the union bound over all such pairs, we see that
	for $p\geq 100\left(\frac{\log n}{n}\right)^{\frac{1}{4}}$,
	a.a.s.\ all distinct pairs $x_1x_2$, $y_1y_2$ from
	$V(\Gamma)^{(2)}$ have at least $\frac{p^4n}{2}$ neighbours in
	common.

	Further, using the bound $\binom{n}{m}\leq \left(\frac{en}{m}\right)^m$, we have that the expected number of $m$--cliques in $\Gamma$ is
	%\begin{align*}
	$\binom{n}{m}p^{\binom{m}{2}}\leq\left( \frac{en}{m}p^{\frac{m-1}{2}}\right)^m$. 
	%\end{align*}
	For $m=\lceil \frac{p^4n}{4}\rceil $ and $p\leq 1-\frac{100}{n}$, the value of this expectation is $o(1)$, whence by Markov's inequality a.a.s.\ $\Gamma$ contains no such clique. In particular, a.a.s.\ any $p^4n/2$--set in $\Gamma$ contains at least one non-edge.

	It follows from the two previous paragraphs that a.a.s.\ for any
	pair of distinct non-edges $x_1x_2$, $y_1y_2$ from
	$V(\Gamma)^{(2)}\setminus E(\Gamma)$ there exists a non-edge
	$z_1z_2\in E(\overline{\Gamma})$ with $z_1, z_2$  lying in the common neighbourhood of $\{x_1,x_2,y_1,y_2\}$, whence $x_1x_2$ and $y_1y_2$ are
	connected by a path of length at most two in $T_1(\Gamma)$.  This 
	establishes the proposition in this case.

	\noindent \textbf{Case 2:} $\mathbf{\omega(n^{-2})\leq 1-p\leq
	100/n}$\textbf{.} By classical results on the
	connectivity threshold for the Erd{\H o}s--R\'enyi random graph model
	$\mathcal{G}_{n, 1-p}$ \cite{ErdosRenyi1}, the complement graph $\overline{\Gamma}\sim
	\mathcal{G}_{n, 1-p}$ of $\Gamma$ is a.a.s.\ not connected.  In
	particular non-edges of $\Gamma$ from distinct components of
	$\overline{\Gamma}$ are joined by a square in $\Gamma$.  Since for
	$\omega(n^{-2})\leq 1-p(n)\leq \frac{100}{n}$ the graph
	$\overline{\Gamma}$ a.a.s.\ contains at least two distinct
	non-trivial components (by e.g., a standard second moment
	argument), it follows that for $p$ in this range $T_1(\Gamma)$
	again has a.a.s.\ diameter at most two.\end{proof}

\begin{proposition}\label{prop: connectivity above n to the -1/3}
	Let $\Gamma\sim\mathcal{G}_{n,p}$, and let $\varepsilon>0$ be
	fixed.  If $p=p(n)$ satisfies
	\[ 100 \left(\frac{\log n}{n}\right)^{\frac{1}{3}}\leq p(n)\leq 100\left(\log (n)/n\right)^{\frac{1}{4}}\] 
	then a.a.s.\ $T_1(\Gamma)$ is connected.
\end{proposition}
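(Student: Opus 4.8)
The plan is to prove the stronger assertion that a.a.s.\ every two non-edges $f,f'$ of $\Gamma$ are joined by a path of length at most three in $T_1(\Gamma)$, so that $T_1(\Gamma)$ has diameter at most three and in particular is connected. Fix non-edges $f=\{a,b\}$ and $f'=\{c,d\}$. I will look for vertices $x_1,x_2$ that are non-adjacent common neighbours of $a,b$ and vertices $y_1,y_2$ that are non-adjacent common neighbours of $c,d$, with $\{x_1,x_2\}\cap\{y_1,y_2\}=\emptyset$ and $x_iy_j\in E(\Gamma)$ for all $i,j\in\{1,2\}$: then $\{a,b,x_1,x_2\}$, $\{x_1,x_2,y_1,y_2\}$ and $\{y_1,y_2,c,d\}$ each span an induced square, so $f-\{x_1,x_2\}-\{y_1,y_2\}-f'$ is a path in $T_1(\Gamma)$ by Definition~\ref{def:square graph}. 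Since there are only $O(n^4)$ pairs of non-edges, it suffices to show the probability that a given pair admits no such path is $o(n^{-4})$.

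The structural input, established via the Chernoff bound (Proposition~\ref{prop: Chernoff}) and a union bound over the relevant $O(n^4)$ quadruples of vertices, is that a.a.s.: (a) $|N_\Gamma(u)\cap N_\Gamma(v)|=\Theta(np^2)$ for every pair $\{u,v\}$, while $|N_\Gamma(a)\cap N_\Gamma(b)\cap N_\Gamma(c)\cap N_\Gamma(d)|=O(np^4)=o(np^2)$ for every quadruple; and (b) for every pair $\{u,v\}$ the induced graph $\Gamma[N_\Gamma(u)\cap N_\Gamma(v)]$ has $\Omega(n^2p^4)$ non-edges --- having fixed the set $N_\Gamma(u)\cap N_\Gamma(v)$, whose membership depends only on edges incident to $u$ or $v$, the edges inside it are still i.i.d.\ $\mathrm{Bernoulli}(p)$, and since $p=o(1)$ all but a $o(1)$-fraction of the $\binom{\Theta(np^2)}{2}$ pairs are non-edges, again by Chernoff. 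The relevant error probabilities $e^{-\Omega(np^2)}$ and $e^{-\Omega(n^2p^4)}$ are $o(n^{-4})$ because $np^2\gg\log n$ and $n^2p^4\gg\log n$ in our range.

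The crux is the per-pair estimate. To obtain the independence needed for Janson's inequality I would reveal the edges of $\Gamma$ in stages. First reveal the edges incident to $\{a,b,c,d\}$; on the good event above these determine whether $f,f'$ are non-edges and determine disjoint sets $S_1\subseteq N_\Gamma(a)\cap N_\Gamma(b)$ and $S_2\subseteq N_\Gamma(c)\cap N_\Gamma(d)$, each of size $\Theta(np^2)$ and avoiding $\{a,b,c,d\}$ (take $N_\Gamma(a)\cap N_\Gamma(b)$ and $N_\Gamma(c)\cap N_\Gamma(d)$ and delete from each the $\{a,b,c,d\}$ and the $o(np^2)$-size common part). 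Next reveal the edges inside $S_1$ and inside $S_2$ (still i.i.d.\ $\mathrm{Bernoulli}(p)$): on the good event these yield sets of non-edges $\mathcal{D}_1\subseteq S_1^{(2)}$, $\mathcal{D}_2\subseteq S_2^{(2)}$ with $|\mathcal{D}_1|,|\mathcal{D}_2|=\Omega(n^2p^4)$. Finally the edges between $S_1$ and $S_2$ are still i.i.d.\ $\mathrm{Bernoulli}(p)$; for $(\{x_1,x_2\},\{y_1,y_2\})\in\mathcal{D}_1\times\mathcal{D}_2$ let $F_{x,y}$ be the event that all four edges $x_iy_j$ are present. If some $F_{x,y}$ occurs we obtain the desired $T_1(\Gamma)$-path. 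Applying Janson (Proposition~\ref{proposition: Janson inequality}) to $\mathcal{F}=\{\{x_1y_1,x_1y_2,x_2y_1,x_2y_2\}:(\{x_1,x_2\},\{y_1,y_2\})\in\mathcal{D}_1\times\mathcal{D}_2\}$ gives $\mu=|\mathcal{D}_1||\mathcal{D}_2|p^4=\Omega(n^4p^{12})$, and, since two quadruples overlap only when their $x$-pairs overlap and their $y$-pairs overlap, the same codegree computation as in the proof of Proposition~\ref{prop: connecting lemma} gives $\Delta=O\!\big(\mu(|S_1||S_2|p^3+(|S_1|+|S_2|)p^2)\big)=O\big(\mu(n^2p^7+np^4)\big)$, hence $\mu^2/\Delta=\Omega(n^2p^5)$.

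The final step --- and the main obstacle --- is checking that $\mu^2/\Delta$ is large enough to beat the union bound over all $O(n^4)$ pairs, i.e.\ that $n^2p^5=\omega(\log n)$; this is exactly what the hypothesis $p\ge 100\sqrt[3]{\log n/n}$ provides, giving $n^2p^5\ge 100^5 n^{1/3}(\log n)^{5/3}$. Then $\mathbb{P}(\text{no }F_{x,y}\text{ occurs}\mid\text{everything revealed so far})\le e^{-\mu^2/2\Delta}=e^{-\omega(\log n)}=o(n^{-4})$, so together with the $e^{-\Omega(np^2)}+e^{-\Omega(n^2p^4)}$ bound on the complement of the good event the failure probability for the fixed pair $(f,f')$ is $o(n^{-4})$, and a union bound over all pairs of non-edges completes the proof. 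Two points make the argument delicate: because of the non-monotonicity of connectivity in square percolation (Section~\ref{subsection: nonmonotonicity}) one cannot estimate a second moment on $\Gamma$ naively but must expose the edges in stages so that $S_1,S_2,\mathcal{D}_1,\mathcal{D}_2$ are all fixed before the connecting edges are revealed; and one must keep $\Delta$ small, its dominant contribution being $|S_1||S_2|p^3\mu=\Theta(n^2p^7)\mu$, which is large precisely because the common neighbourhoods $S_1,S_2$ have size $\Theta(np^2)$. (The upper bound $p\le 100(\log n/n)^{1/4}$ plays no essential role --- the argument works whenever $n^2p^5\gg\log n$ --- but the complementary range $p\ge 100(\log n/n)^{1/4}$ is already covered by Proposition~\ref{prop: large p connectivity} via a simpler diameter-two argument.)
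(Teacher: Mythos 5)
Your proof is correct in substance but takes a genuinely different route from the paper's. The paper proves the result without Janson's inequality, via a cleaner two-step reduction: (a) by Chernoff, a.a.s.\ every \emph{triple} of vertices has at least $p^3n/2$ common neighbours, and by a first-moment clique bound every set of $p^3n/2$ vertices contains a non-edge, so any two \emph{intersecting} non-edges $xy,yz$ are joined through a common non-edge $w_1w_2 \subseteq N(x)\cap N(y)\cap N(z)$ and hence lie at distance at most $2$ in $T_1(\Gamma)$; (b) $\overline\Gamma\sim\mathcal{G}_{n,1-p}$ is a.a.s.\ connected by Erd\H{o}s--R\'enyi, so one can walk from any non-edge to any other through a chain of pairwise-intersecting non-edges. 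Your approach instead connects \emph{arbitrary} non-edges $f,f'$ directly via a length-$3$ path through a bridging square in $\Gamma[S_1,S_2]$, which requires the staged edge-exposure and the Janson computation. Your version buys a uniform a.a.s.\ diameter bound of $3$, which the paper's argument doesn't directly yield, while the paper's version is substantially shorter and avoids the bookkeeping over $\mathcal{D}_1\times\mathcal{D}_2$.

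Two small imprecisions worth flagging. First, the claim $|N(a)\cap N(b)\cap N(c)\cap N(d)|=O(np^4)$ with failure probability $o(n^{-4})$ does not hold as stated at the bottom of the range (there $np^4\to 0$, so the four-fold common neighbourhood is likely empty, but cannot be shown to be $O(np^4)$ -- i.e.\ $0$ -- with error $o(n^{-4})$); what you actually need and can prove, e.g.\ via $\mathbb{P}(\mathrm{Bin}(n,p^4)\ge np^2/4)\le (4ep^2)^{np^2/4}=o(n^{-4})$, is the weaker $o(np^2)$, which is what you use. Second, in the regime where $\Delta<\mu$ (which happens for $p$ near the lower end of the range, since $\Delta/\mu=O(n^2p^7)$ can be $o(1)$), the bound $e^{-\mu^2/2\Delta}$ should strictly be replaced by the form $e^{-\mu+\Delta/2}$ of Janson's inequality; this only improves your estimate, since then $\mu=\Omega(n^4p^{12})\gg\log n$, so the conclusion is unaffected.
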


\begin{proof}
We argue similarly to Case~1 in Proposition~\ref{prop: large p connectivity}.  Since the distribution of the number of neighbours of a triple of vertices is $\mathrm{Binomial}(n-3, p^3)$, applying a Chernoff bound (Proposition~\ref{prop: Chernoff}) and taking the union over all such triples, we see that for $p\geq 100 \left(\log (n)/n\right)^{\frac{1}{3}}$  the following holds: 
\begin{enumerate}[(a)]
\item a.a.s.\ every triple of vertices in $\Gamma$ has at least $p^3n/2$ common neighbours.  
\end{enumerate}
Further, the expected number of cliques of size $m= \lceil p^3n /2\rceil$ is at most $\left( \frac{ne}{m}p^{\frac{m-1}{2}}\right)^m =o(1)$ for $p \leq 100\left(\log (n)/n\right)^{\frac{1}{4}}$. Thus, we have that
\begin{enumerate}[(b)]
	\item a.a.s.\ every set of $p^3n/2$ vertices in $\Gamma$ must contain at least one non-edge of $\Gamma$. 
\end{enumerate}
It follows from the two facts (a) and (b) that
\begin{enumerate}[(c)]
	\item a.a.s.\  for any pair of intersecting non-edges $xy, yz\in E(\overline{\Gamma})$, $xy, yz$ belong to the
	same square component.  
\end{enumerate}
Now, for $p$ in the range we are considering,  classical  results of Erd{\H o}s and R\'enyi~\cite{ErdosRenyi1} imply that
\begin{enumerate}[(d)]
\item a.a.s.\ $\overline{\Gamma}\sim \mathcal{G}_{n,1-p}$ is connected.
\end{enumerate}
It follows from (c) and (d) that $T_1(\Gamma)$ is a.a.s.\ connected.
\end{proof}

By a Chernoff bound, for $p(n)\leq 1-\omega (n^{-2})$ we have that with probability $1-\exp(-\Omega(n^2(1-p)))=1-o(1)$ that there are $\Omega(n^2(1-p))=\omega(1)$ non-edges in $\Gamma$, each of which is a vertex of $T_1(\Gamma)$. It thus follows from Propositions~\ref{prop: large p connectivity} and~\ref{prop: connectivity above n to the -1/3} that for $p(n)$ between $100 (\log(n)/n)^{\frac{1}{3}}$ and $1-\omega(n^{-2})$, a.a.s.\ there exists a unique component in $T_1(\Gamma)$ of order $\Omega(n^2 (1-p))$, proving Theorem~\ref{theorem: second component} in that range.

In the remainder of this subsection, we turn therefore our attention to the range 
\[p(n)\in R=R(n):= [C\sqrt{\frac{\log \log n}{n}},  100 (\log(n)/n)^{\frac{1}{3}}],\] 
where $C>0$ is  a large constant to be specified later. We first establish a simple proposition whose purpose is to extend upwards the range of $p$ for which we can guarantee that many non-edges are in `somewhat large' square components from that given in Proposition~\ref{prop: many edges in large components}.
\begin{proposition}\label{prop: p large-ish, all square components are large-ish}
	Suppose $p(n)$ satisfies $50\sqrt{\log (n)/n} \leq p \leq 100 \left(\log (n)/n\right)^{\frac{1}{3}}$. Then with probability $1-O(n^{-1})$, every non-edge of $\Gamma$ belongs to a square component of size at least $(\log n)^4$. 	
\end{proposition}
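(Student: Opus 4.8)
The plan is to fix a pair $\{u,v\}\in V(\Gamma)^{(2)}$, bound the probability that $\{u,v\}$ is a non-edge whose square component has fewer than $(\log n)^4$ vertices by $n^{-\Omega(1)}$ with a large implied exponent, and then take a union bound over the $\binom n2$ pairs; the constant $50$ in the hypothesis is exactly what will supply a large enough exponent. Write $q:=p^2n$, so the hypotheses give $q\geq 2500\log n$ and $p=o(1)$. Conditioning on $uv\notin E(\Gamma)$, I will reveal edges in stages, using that, conditional on any vertex set $S\subseteq V\setminus\{u,v\}$, the pairs inside $S$ are still an independent $\mathrm{Ber}(p)$ family. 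The basic move, used repeatedly, is: given a non-edge $\{x,y\}$ known to lie in the component and a vertex set $T$ disjoint from $\{x,y\}$ on which no edge at $x$ or $y$ has yet been revealed, reveal those edges and set $B:=N(x)\cap N(y)\cap T$; by Proposition~\ref{prop: Chernoff}, $|B|\geq p^2|T|/3$ except with probability $e^{-\Omega(p^2|T|)}$, and every non-edge with both endpoints in $B$ is a square-graph neighbour of $\{x,y\}$ (the four vertices induce a $4$--cycle), hence lies in the same square component.

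When $q$ is already large, one application suffices: if $q\geq 16(\log n)^2$, I reveal the edges at $u$ and $v$ and apply the basic move with $\{x,y\}=\{u,v\}$ and $T=V\setminus\{u,v\}$; then $|B|=|N(u)\cap N(v)|\geq q/3\geq 5(\log n)^2$ except with tiny probability, and, revealing the pairs inside $B$, the number of non-edges there is a $\mathrm{Binomial}\!\left(\binom{|B|}2,1-p\right)$ variable, hence at least $\binom{|B|}2(1-p)/2\geq |B|^2/16\geq (\log n)^4$ except with tiny probability; all of these lie in the component of $\{u,v\}$.

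The substantive case is $2500\log n\leq q<16(\log n)^2$, where one common neighbourhood yields only $\Theta(q^2)$ --- possibly as few as $\Theta((\log n)^2)$ --- non-edges, so a single round is not enough. Here I will run three rounds of the basic move out of $\{u,v\}$, maintaining a reservoir $R\subseteq V\setminus\{u,v\}$ of untouched vertices (with $N(u)\cap N(v)$, of size $\Theta(q)=o(n)$ w.h.p., removed first, so $|R|\geq n/2$ initially) and processing the branch-points of each round one at a time, deleting each newly produced set $B$ from $R$ before the next one; this guarantees that within a round the branch-points are vertex-disjoint, all revealed edges are fresh, and the produced sets are pairwise disjoint. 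Round $1$: inside $N(u)\cap N(v)$ a fixed pairing leaves, by Proposition~\ref{prop: Chernoff}, a matching of $s=\Theta(q)$ non-edges $f_1,\dots,f_s$, all neighbours of $\{u,v\}$ in $T_1(\Gamma)$. Round $2$: for each $f_i$ apply the basic move into $R$ to get disjoint $B_i$ with $|B_i|=\Theta(q)$, and inside each $B_i$ a fixed pairing leaves a matching of $\Theta(q)$ non-edges; concatenating yields $t=\Theta(q^2)$ vertex-disjoint non-edges $g_1,\dots,g_t$ in the component. Round $3$: for each $g_j$ apply the basic move into $R$ to get disjoint $C_j$ with $|C_j|=\Theta(q)$, and take all $\Theta(q^2)$ non-edges inside each $C_j$; since the $C_j$ are pairwise disjoint these are distinct and all lie in the component of $\{u,v\}$, which therefore has $\Omega(t\,q^2)=\Omega(q^4)\gg (\log n)^4$ vertices (using $q\geq 2500\log n$). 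The total number of reservoir vertices ever consumed is $O(q)+O(sq)+O(tq)=O(q^3)=O((\log n)^6)=o(n)$, so $|R|\geq n/3$ throughout; hence every invocation of the basic move is on a set of size $\Omega(n)$, with Binomial mean $\Omega(q)=\Omega(\log n)$ and failure probability $e^{-\Omega(\log n)}=n^{-\Omega(1)}$, and the number of invocations is $O(t)=(\log n)^{O(1)}$, so the per-pair failure probability is $n^{-\Omega(1)}$ with exponent as large as we like, and summing over the $\binom n2$ pairs gives $O(n^{-1})$.

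The genuinely delicate point --- and the reason for this round-by-round disjoint bookkeeping rather than a shorter second-moment count --- is that one needs the per-pair failure probability to be \emph{polynomially} small in $n$, not merely $o(1)$: a quicker argument, using all $\Theta(q^2)$ non-edges of $N(u)\cap N(v)$ at once and estimating that their common neighbourhoods overlap negligibly in expectation, only yields an $o(1)$ bound, which is useless against a union bound over $\binom n2$ pairs. Forcing disjointness by sequential reservoir depletion turns every estimate into a plain Chernoff bound, at the price of two routine checks: that only $o(n)$ reservoir vertices are ever consumed --- which is where the upper bound $q=O((\log n)^2)$ of the substantive case is used --- and that the multiplicative count telescopes through the rounds to $\Omega(q^4)$.
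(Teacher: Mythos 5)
Your proof is correct, and it takes a genuinely different route from the paper's. Both proofs share the outer structure: bound the per-pair failure probability by $n^{-\Omega(1)}$ with a large implicit exponent (which is what the constant $50$ buys), then union-bound over the $\binom{n}{2}$ pairs. The split at $q\approx (\log n)^2$, i.e.\ $p \approx 4\log n/\sqrt n$, is also the same. The key difference is in how one guarantees many non-edges inside a common neighbourhood $B$. The paper first observes that a.a.s.\ $\Gamma$ contains no $K_8$, and then applies Tur\'an's theorem so that, conditionally on this single global event, \emph{every} revealed common neighbourhood of size $\Theta(q)$ deterministically contains $\Omega(q^2)$ non-edges; in the below-threshold regime it then runs a $(\log n)^2$--step linear exploration, one non-edge per step, each step collecting $\Omega(q^2)$ new non-edges and seeding the queue with a fresh matching. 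You instead reveal the pairs inside each $B$ and use a Chernoff bound to count non-edges directly, and you organise the exploration as a fixed $3$-round branching process whose final count telescopes cleanly to $\Omega(q^4)$. What the Tur\'an trick buys the paper is fewer per-step probabilistic events to track: after conditioning on no-$K_8$, the non-edge counts are deterministic, and the only per-step Chernoff bounds needed are for $|X_t|$. What your approach buys is elementarity (no extremal-graph-theory input) and a more transparent tally: $3$ rounds rather than $(\log n)^2$, with the accumulation explicit as $q\to q^2\to q^4$. The price is the explicit sequential reservoir depletion, which you correctly identify as necessary to keep every estimate a plain Chernoff bound (rather than an $o(1)$ second-moment estimate that would not survive a union bound over $\binom n2$ pairs); the paper's accumulating set $X$ plays the analogous role. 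Both bookkeeping schemes correctly ensure that each edge is revealed at most once and that the sets produced in a given round are pairwise disjoint.
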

\begin{proof}
The expected number of complete subgraphs on $8$ vertices contained in $\Gamma$ is $\binom{n}{8}p^{\binom{8}{2}}=O(n^8p^{28})$, which for $p\leq n^{-\frac{1}{3}+o(1)}$ is $o(n^{-1})$. Thus by Markov's inequality, the event $\mathcal{B}$ that $\Gamma$ contains a complete subgraph on $8$ vertices has probability at most $o(n^{-1})$.

	Consider a pair of vertices $v,v'$ from $V(\Gamma)$ with $vv'\notin E(\Gamma)$.  By a Chernoff bound (Proposition~\ref{prop: Chernoff}), with probability at least $1-2\exp\left(-(\frac{1}{12}+o(1))p^2n\right)= 1-O(n^{-3})$, the set $X$ of common neighbours of $v$ and $v'$ has size at least $\frac{1}{2}p^2n$ and at most $\frac{3}{2}p^2n$.  If $\mathcal{B}$ does not occur, then $\Gamma[X]$ does not contain any complete graph on $8$ vertices, whence by Tur\'an's theorem~\cite{turan1941external} it contains at most $\left(1-\frac{1}{7}+o(1)\right)\binom{\vert X\vert }{2}$  edges. In particular, there is a set $C=X^{(2)}\cap E(\overline{\Gamma})$ of a least $(\frac{1}{7}-o(1))\binom{p^2n/2}{2}>\frac{1}{64}p^4n^2$ non-edges in $\Gamma[X]$, all of which lie inside the same square component as $vv'$.

\textbf{Case 1: $\mathbf{p\geq 4 \log n/\sqrt{n}}$.} Taking a union bound over the $\binom{n}{2}$ pairs $vv'$ from $V(\Gamma)^{(2)}$, the argument above shows that with probability $1- \mathbb{P}(\mathcal{B})-\binom{n}{2}\cdot O(n^{-3})= 1-O(n^{-1})$, every pair $vv'\in E(\overline{\Gamma})$ is in a component of $T_1(\Gamma)$ of size at least $(\log n)^4$.

\textbf{Case 2: $\mathbf{p\leq 4 \log n/\sqrt{n}}$.} Suppose
$\mathcal{B}$ does not occur.  With $X$ and $C$ as above, and $C$ in
particular of size at least $\frac{1}{64} p^4n^2>10000 (\log n)^2$,
there must exist a perfect matching $e_1, e_2, \ldots, e_m$ of
$m:=\lceil 100 \log n\rceil $ non-edges of $\Gamma$ inside $X$ (i.e., 
$m$ pairwise disjoint non-edges).  We use these non-edges to explore
the square component containing $vv'$.  For $1\leq t\leq m$, do the
following: set $X_t$ to be the set of vertices in $V(\Gamma)\setminus
(X\cup\{v,v'\})$ sending edges to both vertices in $e_t=v_tv'_t$, and
let $C_t:=X_t^{(2)}\cap E(\overline{\Gamma})$.  If $X_t$ has size less
than $\frac{1}{2}p^2n$ or more than $\frac{3}{2}p^2n$, terminate the
process.  Else it follows from Tur\'an's theorem, just as in the
analysis about Case $1$, that $C_t$ has size at least $10000(\log n)^
2$ and contains a collection $e_{(t-1)m+1}, \ldots, e_{tm}$ of $m$
non-edges in $X_t$ forming a perfect matching.  Add $C_t$ to $C$ and
$X_t$ to $X$.  Then if $t\geq (\log n)^2$, terminate the process,
otherwise move to step $t+1$ of our iterative exploration.

We claim that either $\mathcal{B}$ occurs  or  with probability $1-O(n^{-3})$ the process ends with $t=\lceil (\log n)^2\rceil $ and we have $\frac{1}{2}p^2n \leq \vert X_t\vert \leq \frac{3}{2}p^2n$ at each time step $t$: $1\leq t\leq (\log n)^2$. Note that if this is the case then $C$ at the end of the process has size at least $(\log n)^4$, and is part of the same component as $vv'$ in $T_1(\Gamma)$. Taking a union bound over all pairs $vv'$, our claim would thus imply that with probability $1- \mathbb{P}(\mathcal{B})-\binom{n}{2}\cdot O(n^{-3})= 1-O(n^{-1})$, every pair $vv'\in E(\overline{\Gamma})$ is in a component of $T_1(\Gamma)$ of size at least $(\log n)^4$, as desired.

Let us therefore prove our claim. Note first of all that at each time step $t<(\log n)^2$ before the process terminates we have $V(\Gamma)\setminus (X\cup\{v,v'\}) > n - 2t(\log n)^2 p^2n=(1-o(1))n$.  By a Chernoff bound, with probability  $1-2\exp\left(-(\frac{1}{12}+o(1))p^2n\right)= 1-O\left(n^{-4}\right)$,  we have that $X_t\in [\frac{p^2n}{2}, \frac{3p^2n}{2}]$. Thus as claimed either $\mathcal{B}$ occurs or with probability $1- (\log n)^2 O(n^{-4})=1-O(n^{-4})$ the process ends with $t=\lceil (\log n)^2$ and $\frac{1}{2}p^2n \leq \vert X_t\vert \leq \frac{3}{2}p^2n$ holding at each time step $t$: $1\leq t\leq (\log n)^2$.
\end{proof}
\begin{corollary}\label{cor: giant exists}
Fix a subset $V' \subseteq V(\Gamma)$ with $\vert V'\vert =\left(1-o(1)\right)n$, let $p=p(n) \in R$. Then with probability $1-O(n^{-1})$, there exists a giant square component in $T_1(\Gamma[V']) $ of size $\Omega(n^2)$.	
\end{corollary}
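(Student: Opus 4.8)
The plan is to reduce Corollary~\ref{cor: giant exists} to the two cases already handled in this subsection, namely Propositions~\ref{prop: many edges in large components} and~\ref{prop: p large-ish, all square components are large-ish}, applied to the induced subgraph $\Gamma[V']$ in place of $\Gamma$. The key observation is that $\Gamma[V']\sim \mathcal{G}_{n', q}$ with $n'=\vert V'\vert = (1-o(1))n$ and $q=p$, so that the hypothesis $p\in R = [C\sqrt{(\log\log n)/n}, 100(\log n /n)^{1/3}]$ translates, up to the $(1-o(1))$ factor hidden in $n'$, into a comparable range for $\Gamma[V']$; in particular $p \geq C'\sqrt{(\log\log n')/n'}$ for a slightly smaller constant $C'$ (still large), and $p \leq 100(\log n'/n')^{1/3} \cdot (1+o(1))$, which is fine since all the relevant propositions are robust to such constant/$(1+o(1))$ perturbations in the bounds on $p$.

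First I would split into two subranges. If $C\sqrt{(\log\log n)/n} \leq p \leq 50\sqrt{(\log n)/n}$, then (after adjusting constants for $n'$) Proposition~\ref{prop: many edges in large components} applies to $\Gamma[V']$: with probability $1-O(n^{-1})$ the number $N_v$ of non-edges of $\Gamma[V']$ lying in square components of $T_1(\Gamma[V'])$ of size at least $(\log n')^4$ satisfies $N_v = (1-o(1))\mathbb{E}N_v = \Omega(n^2)$. Since there are only $O(n^2)$ non-edges in total and each somewhat-large component has bounded size relative to $n^2$ only in the sense that... actually more carefully: to upgrade ``$\Omega(n^2)$ non-edges in components of size $\geq (\log n)^4$'' to ``a single giant component of size $\Omega(n^2)$'' we invoke Theorem~\ref{theorem: second component} (whose proof is being assembled and which gives uniqueness of the giant and $O((\log n/\log\log n)^2)$ bound on all other components) — but that risks circularity if Corollary~\ref{cor: giant exists} is used en route to Theorem~\ref{theorem: second component}. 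The cleaner route, and the one I would take, is to instead apply the sprinkling machinery directly: given $\Omega(n'^2) = \Omega(n^2)$ non-edges in components of size $\geq (\log n)^4$, Proposition~\ref{prop: sprinkling lemma} (applied within $\Gamma[V']$, taking $V_1$ to be most of $V'$ and $V_2$ a linear-sized chunk, with $p\in R$ comfortably inside the required range $\Omega(1/\sqrt{n}) \leq p \leq O((\log n/n)^{1/3})$) shows that with probability $1 - \exp(-\Omega((\log n)^2))$ all those non-edges lie in a single square component of $T_1(\Gamma[V'])$, which therefore has size $\Omega(n^2)$.

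In the complementary subrange $50\sqrt{(\log n)/n} \leq p \leq 100(\log n/n)^{1/3}$, Proposition~\ref{prop: p large-ish, all square components are large-ish} applied to $\Gamma[V']$ gives, with probability $1-O(n^{-1})$, that \emph{every} non-edge of $\Gamma[V']$ lies in a square component of size at least $(\log n')^4$; in particular, since there are $\Omega(n^2)$ non-edges of $\Gamma[V']$ (a Chernoff bound, as $1-p$ is bounded below), there are $\Omega(n^2)$ non-edges in components of size $\geq (\log n')^4$ and one can again finish via Proposition~\ref{prop: sprinkling lemma} to coalesce them into one giant. Alternatively, in this subrange one may simply note the common-neighbourhood argument of Proposition~\ref{prop: p large-ish, all square components are large-ish} already forces essentially everything into one component, but routing everything through the sprinkling lemma keeps the write-up uniform. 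Taking a union bound over the two (non-overlapping, jointly covering $R$) subranges, and absorbing the $\exp(-\Omega((\log n)^2))$ sprinkling failure probability into the $O(n^{-1})$ term, yields the claimed $1-O(n^{-1})$ probability.

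The main obstacle I anticipate is purely bookkeeping: one must check that passing from $\Gamma$ to $\Gamma[V']$ with $\vert V'\vert = (1-o(1))n$ does not push $p$ outside the hypotheses of Propositions~\ref{prop: many edges in large components},~\ref{prop: p large-ish, all square components are large-ish}, and~\ref{prop: sprinkling lemma}, and that the thresholds $(\log n)^4$ versus $(\log n')^4$ and the various $\Omega(n^2)$ versus $\Omega(n'^2)$ estimates are genuinely interchangeable — all of which hold precisely because the constant $C$ in the definition of $R$ was left unspecified and can be chosen large, and because every cited proposition was stated (or noted in the excerpt to be provable) with a free constant in the bounds on $p$. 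No genuinely new idea is needed beyond this reduction.
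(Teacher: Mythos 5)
Your approach is essentially the paper's: combine Propositions~\ref{prop: many edges in large components} and~\ref{prop: p large-ish, all square components are large-ish} (which together cover $R$, so an explicit case split is optional) to produce $\Omega(n^2)$ non-edges in somewhat-large square components, then coalesce them with the sprinkling lemma. However, as written there is an order-of-operations slip that makes the sprinkling step not directly applicable. Hypothesis (ii) of Proposition~\ref{prop: sprinkling lemma} requires $\Omega(n^2)$ non-edges lying in large components of $T_1(\Gamma[V_1])$ for the \emph{smaller} part $V_1$ of the bipartition, whereas you establish this for $T_1(\Gamma[V'])$ first and only then introduce the partition $V' = V_1 \sqcup V_2$. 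Large components of $T_1(\Gamma[V'])$ need not remain large once you delete the vertices of $V_2$: a non-edge can land in a big component of $T_1(\Gamma[V'])$ solely via induced squares that use vertices of $V_2$, and those squares vanish when you pass to $\Gamma[V_1]$. So the conclusion of the sprinkling lemma (which speaks of pairs in $E(\overline{\Gamma[V_1]})$ lying in large components of $T_1(\Gamma[V_1])$) is not the one your step 1 feeds it.

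The fix is trivial and is precisely what the paper does: fix the partition $V' = V_1 \sqcup V_2$ with $|V_2| = (\delta + o(1))n$ \emph{first}, then apply Propositions~\ref{prop: many edges in large components}/\ref{prop: p large-ish, all square components are large-ish} to $\Gamma[V_1]$ (whose size $(1 - \delta - o(1))n$ is still $(1-o(1))n$ up to the constant $\delta$, so all your bookkeeping remarks about rescaling $n$, adjusting $C$, and absorbing $(1+o(1))$ factors apply unchanged), and only then invoke the sprinkling lemma with this partition. Everything else in your proposal, including the correct decision to avoid circularity with Theorem~\ref{theorem: second component} and to route both subranges through the sprinkling lemma, matches the paper's argument.
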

\begin{proof}
Fix $\delta>0$, and partition the set $V'$ as $V'=V_1\sqcup V_2$, where $\vert V_2\vert =(\delta +o(1))n$ and $\vert V_1\vert = (1-\delta-o(1))n$. By Propositions~\ref{prop: many edges in large components} and~\ref{prop: p large-ish, all square components are large-ish}, we have that with probability $1-O(n^{-1})$ there are $\Omega(n^{2})$ non-edges of $\Gamma[V_1]$ that lie in square components of $T_1(\Gamma[V_1])$ of size at least $(\log n)^4$. The corollary is then immediate from the sprinkling lemma, Proposition~\ref{prop: sprinkling lemma}.
\end{proof}
\noindent The last tool we need is the following partial converse to Proposition~\ref{prop: combinatorial characterisation of thickness}, giving a lower bound on the number of non-edges in a graph $\Gamma$ that has a square component of full support.
\begin{lemma}\label{lemma: bound on number of diag in order 1 component}
	Suppose $\Gamma$ is a graph on $m\geq 4$ vertices, and that $C$ is a square component of $\Gamma$ with full support, $\mathrm{supp}(C)=V(\Gamma)$. Then $\vert C\vert \geq m/2$.
\end{lemma}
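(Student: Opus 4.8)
The plan is to reduce the statement to the handshake lemma applied to an auxiliary graph recording the diagonals in $C$. Concretely, I would introduce the graph $H$ on vertex set $V(H):=V(\Gamma)$ whose edge set is precisely $E(H):=C$, where each non-edge $f\in C\subseteq E(\overline{\Gamma})$ is viewed as an ordinary edge of $H$ (so $H$ is a subgraph of $\overline{\Gamma}$, sometimes called the \emph{diagonal graph} of the component $C$). By construction $|E(H)|=|C|$, and a vertex $v\in V(\Gamma)$ is isolated in $H$ if and only if $v$ lies in no non-edge of $C$, i.e.\ if and only if $v\notin\mathrm{supp}(C)$.

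The key (and only) observation is then that the full-support hypothesis $\mathrm{supp}(C)=V(\Gamma)$ says exactly that $H$ has no isolated vertices, i.e.\ $\deg_H(v)\geq 1$ for every $v\in V(\Gamma)$. Summing over all vertices and using the handshake identity,
\[
2\,|C| \;=\; 2\,|E(H)| \;=\; \sum_{v\in V(\Gamma)}\deg_H(v)\;\geq\; |V(\Gamma)|\;=\;m,
\]
whence $|C|\geq m/2$, as claimed.

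There is essentially no obstacle here: the connectedness of $C$ as a component of $T_1(\Gamma)$ and the square-graph structure play no role in this particular inequality — they are precisely what make the lemma a useful \emph{partial} converse to Proposition~\ref{prop: combinatorial characterisation of thickness} rather than the present bound itself. I would also remark (optionally) that the bound is tight, as witnessed by a single induced $4$--cycle, where $m=4$ and $|C|=2=m/2$; more generally, taking a disjoint union of induced $4$--cycles glued along shared diagonals into one square component gives equality infinitely often.
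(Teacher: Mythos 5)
Your proof is correct and is essentially the paper's argument: the paper's one-line double-counting (every vertex lies in some non-edge of $C$, so $2|C|\geq m$) is exactly your handshake-lemma computation in the auxiliary graph $H$, just stated more explicitly.
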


\begin{proof}
	By definition, every vertex $u\in V(\Gamma)=\mathrm{supp}(C)$ belongs to a non-edge $uv\in E(\overline{\Gamma})$ with $uv\in C$.  By double-counting, $C$ must have size at least $m/2$.
\end{proof}

\noindent With these results in hand, we are now ready to prove Theorem~\ref{theorem: second component} for the remaining range of $p$, $p\in R$.
\begin{proof}[Proof of Theorem~\ref{theorem: second component}]
Let $C>0$ be a sufficient large constant, and let $p=p(n)$ satisfy 
\[\frac{C\sqrt{\log \log  n}}{\sqrt{n}}\leq p \leq 100\left(\frac{\log n} {n}\right)^{\frac{1}{3}}.\] 	
Let $\Gamma\sim \mathcal{G}_{n,p}$ and write $V$ for $V(\Gamma)$. Let $M=\lfloor \frac{\log n}{8\log \log n}\rfloor$. Take a balanced partition $V=\sqcup_{i=1}^{2M^2}V_i$ of the vertex set of $\Gamma$ into $2M^2$ parts.	Given a subset $S$ of $[2M^2]$, we denote by $V_S:=\bigcup_{i\in S}V_i$ the union of all parts $V_i$ with $i\in S$.

Given any subset $S\in [2M^2]^ {(2M)}$, let $E_S$ be the event that there is a giant square component $C$ inside $T_1(\Gamma[V\setminus V_S])$ containing $\Omega(n^2)$ non-edges of $\Gamma$, and  further that every square component $C'$ of $T_1(\Gamma[V_S])$ of size at least $M$ joins up to (every such giant) $C$ in $T_1(\Gamma)$. We claim that $E_S$ occurs with very high probability.
\begin{claim}\label{claim: all parts good}
	For every $S\in [2M^2]^ {(2M)}$, we have $\mathbb{P}(\overline{E_S})=o\left({\binom{2M^2}{2M}}^{-1}\right)$.
\end{claim}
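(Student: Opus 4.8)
The plan is to derive Claim~\ref{claim: all parts good} as a fairly direct consequence of Corollary~\ref{cor: giant exists} and the Giant-connecting lemma, Proposition~\ref{prop: connecting lemma}, applied with $W_1:=V_S$ and $W_2:=V\setminus V_S$. Since the partition $V=\sqcup_{i=1}^{2M^2}V_i$ is balanced and $|S|=2M$, we have $|W_1|=2M\cdot\frac{n}{2M^2}=\frac{n}{M}$ and $|W_2|=\left(1-\frac1M\right)n=(1-o(1))n$; and since $p\in R$, the parameter $M=\lfloor\frac{\log n}{8\log\log n}\rfloor$ satisfies $1\ll M$ and $Mp\le M\cdot 100\left(\frac{\log n}{n}\right)^{1/3}=o(1)$, hence $1\ll M\ll p^{-1}$. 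Thus the hypotheses of both results hold with $W_1,W_2$ in these roles.

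First I would establish the giant inside $W_2$. Applying Corollary~\ref{cor: giant exists} with $V'=W_2$ shows that, with probability $1-O(n^{-1})$, the graph $T_1(\Gamma[W_2])$ contains a square component with $\Omega(n^2)$ non-edges; write $\mathcal{G}$ for this event, noting it is measurable with respect to $\Gamma[W_2]$. Since $\Gamma$ has at most $\binom n2$ non-edges in total and each such giant uses a fixed positive fraction of $n^2$ of them, the number $k$ of components $C^{(1)},\dots,C^{(k)}$ of $T_1(\Gamma[W_2])$ of size $\Omega(n^2)$ is always $O(1)$.

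Next I would connect the ``large-ish'' components of $T_1(\Gamma[W_1])$ to each of these giants. Conditioning on any outcome of $\Gamma[W_2]$ for which $\mathcal{G}$ holds, and then revealing the remaining edges of $\Gamma$, Proposition~\ref{prop: connecting lemma} applied with $C_2=C^{(j)}$ gives that, with probability at least $1-\frac{n^2}{M^3}\exp\left(-\Omega\left(Mnp^2\right)\right)$, every component of $T_1(\Gamma[W_1])$ of size at least $M$ lies in the same component of $T_1(\Gamma)$ as $C^{(j)}$. A union bound over $j\in[k]$, together with $\mathbb{P}(\overline{\mathcal{G}})=O(n^{-1})$, then yields
\[\mathbb{P}(\overline{E_S})\ \le\ O(n^{-1})\ +\ O\!\left(\frac{n^2}{M^3}\exp\left(-\Omega\left(Mnp^2\right)\right)\right).\]

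The final step, which I expect to be the only delicate point, is to check that this bound beats $\binom{2M^2}{2M}^{-1}$. From $p\ge C\sqrt{(\log\log n)/n}$ and the estimate $M\log\log n=(1-o(1))\frac{\log n}{8}$ (immediate from the definition of $M$) we get $Mnp^2\ge C^2 M\log\log n=(1-o(1))\frac{C^2}{8}\log n$, so the second term above is at most $n^{2-\Omega(C^2)}$; choosing $C$ to be a sufficiently large absolute constant makes this $o(n^{-1})$, whence $\mathbb{P}(\overline{E_S})=O(n^{-1})$. On the other hand $\binom{2M^2}{2M}\le (2M^2)^{2M}=\exp\left(O\left(M\log M\right)\right)=n^{1/2+o(1)}$ (indeed $n^{1/4+o(1)}$ using $\binom{2M^2}{2M}\le(eM)^{2M}$), which is at most $n^{2/3}$ for $n$ large, so $\mathbb{P}(\overline{E_S})=O(n^{-1})=o\!\left(\binom{2M^2}{2M}^{-1}\right)$, as required. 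The argument is thus conceptually routine given the tools already assembled; its only real content is arranging the constants so that the exponential saving $\exp(-\Omega(Mnp^2))=n^{-\Omega(C^2)}$ supplied by Proposition~\ref{prop: connecting lemma} dominates both the polynomial factor $n^2/M^3$ and the count $\binom{2M^2}{2M}=n^{O(1)}$ of sets $S$ over which this estimate will later be summed --- which is exactly why the constant $C$ defining the range $R$ must be taken large, and why $M$ is chosen to equal $\lfloor\frac{\log n}{8\log\log n}\rfloor$.
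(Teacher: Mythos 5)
Your proof is correct and follows essentially the same route as the paper: apply Corollary~\ref{cor: giant exists} with $V'=V\setminus V_S$ to get a giant in $T_1(\Gamma[V\setminus V_S])$ with probability $1-O(n^{-1})$, then apply Proposition~\ref{prop: connecting lemma} with $W_1=V_S$, $W_2=V\setminus V_S$ to connect every size-$\geq M$ component of $T_1(\Gamma[V_S])$ to the giant, and finally check that the resulting $O(n^{-1})$ bound beats $\binom{2M^2}{2M}^{-1}=n^{-1/4+o(1)}$. You are slightly more careful than the paper in explicitly union-bounding over the $O(1)$ giant components of $T_1(\Gamma[V\setminus V_S])$, which the paper glosses over but which adds only a harmless constant factor.
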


\begin{proof}
Since $\vert V\setminus V_S\vert=(1-\frac{1}{M}+o(1))n =(1+o(1))n$, it
follows from Corollary~\ref{cor: giant exists} that with probability
$1-O(n^{-1})$ there exists a giant square component $C$ inside
$T_1(\Gamma[V\setminus V_S])$.  If such a giant exists,
Proposition~\ref{prop: connecting lemma} tells us there is a
probability of at most $n^2M^{-3}\exp\left(-\Omega(Mnp^2)\right)$ that
some component of size $M$ in $T_1(\Gamma[V_S])$ fails to connect up
to it in $T_1(\Gamma)$.  Picking our constant $C$ sufficiently large,
we have that for $p\geq C\sqrt{\log\log (n) /n}$ this probability is
$O(n^{-1})$.  Putting it all together, the probability of $E_S$ is
$1-O(n^{-1})=1- o\left({\binom{2M^2}{2M}}^{-1}\right)$, proving the
lemma. It is here that our choice of $M$ comes in, as it ensures 
that ${2eM}^{4M} =\exp \left(\left(\frac{1}{2}+o(1)\right)\log n\right)$ so that $\binom{2M^2}{M}^2\leq \left(\frac{2eM^2}{M}\right)^{2M}=o(n)$.
\end{proof}

Further, given subsets $S,S'\in [2M^2]^ {(2M)}$ with $\vert S\cap S'\vert=2M-1$, let $E_{S,S'}$ be the event that for every pair of components $C$ in $T_1(\Gamma[V\setminus V_S])$ and $C'$ in $T_1(\Gamma[V\setminus V_{S'}])$ such that $C, C'$ both have size $\Omega(n^2)$, $C$ and $C'$ join up in  $T_1(\Gamma)$. Again, we claim that this event $E_{S, S'}$ occurs with very high probability.

\begin{claim}\label{claim: giants join up}
	For every $S,S'\in [2M^2]^ {(2M)}$ with $\vert S\cap S'\vert=2M-1$,  $\mathbb{P}(\overline{E_{S,S'}})=o\left({\binom{2M^2}{2M}}^{-2}\right)$.
\end{claim}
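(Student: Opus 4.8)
\noindent\emph{Proposal.} The plan is to exploit the fact that $V\setminus V_S$ and $V\setminus V_{S'}$ overlap in a large common ``core''. Since $|S|=|S'|=2M$ and $|S\cap S'|=2M-1$, we have $|S\cup S'|=2M+1$; write $S\setminus S'=\{a\}$ and $S'\setminus S=\{b\}$, and set $W:=V\setminus V_{S\cup S'}$, so that $V\setminus V_S=W\sqcup V_b$, $V\setminus V_{S'}=W\sqcup V_a$, and $|W|=\bigl(1-\tfrac{2M+1}{2M^2}+o(1)\bigr)n=(1-o(1))n$. We will use the giant component of $T_1(\Gamma[W])$ (which exists by Corollary~\ref{cor: giant exists}) as an ``anchor'' pinning the giant components on the two sides to the same component of $T_1(\Gamma)$. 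Two elementary observations will be used throughout: first, for $U\subseteq U'\subseteq V(\Gamma)$ the graph $T_1(\Gamma[U])$ is an induced subgraph of $T_1(\Gamma[U'])$ (and of $T_1(\Gamma)$), since whether four vertices of $U$ span an induced square is unaffected by passing to a larger induced subgraph; consequently every connected component of $T_1(\Gamma[U])$ lies inside a single component of $T_1(\Gamma[U'])$ and of $T_1(\Gamma)$. Second, a component of $T_1$ of size $\Omega(n^2)$ is, by definition, a set of $\Omega(n^2)$ non-edges of $\Gamma$.

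First I would introduce three good events. Let $\mathcal A_0$ be the event that $T_1(\Gamma[W])$ contains a component $C_W$ of size $\Omega(n^2)$; by Corollary~\ref{cor: giant exists} applied with $V'=W$, we have $\mathbb P(\overline{\mathcal A_0})=O(n^{-1})$. Let $\mathcal A_S$ be the event that either $T_1(\Gamma[V\setminus V_S])$ has at most one component of size $\Omega(n^2)$, or else all of its components of size $\Omega(n^2)$ lie in a single component of $T_1(\Gamma)$. Conditioning on $\Gamma[V\setminus V_S]$ and applying the ``Further'' part of the Giant-connecting lemma (Proposition~\ref{prop: connecting lemma}) with $W_1=V_S$, $W_2=V\setminus V_S$ --- which is legitimate because $|V_S|=(1+o(1))n/M$, because $1\ll M\ll p^{-1}$ for $p$ in the range $R$ (indeed $M\to\infty$ and $pM=o(1)$ since $p\le 100(\log n/n)^{1/3}$), and because $n^{-1/2}\ll p\le 100(\log n/n)^{1/3}$ --- any two giant components of $T_1(\Gamma[V\setminus V_S])$ merge in $T_1(\Gamma)$ except with probability $\exp(-\Omega(n^2p^2))$; since there are only $O(1)$ such giants, a union bound gives $\mathbb P(\overline{\mathcal A_S})=\exp(-\Omega(n^2p^2))$, which is $o(n^{-1})$ as $n^2p^2\ge C^2 n\log\log n$. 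Define $\mathcal A_{S'}$ analogously with $W_1=V_{S'}$, $W_2=V\setminus V_{S'}$; likewise $\mathbb P(\overline{\mathcal A_{S'}})=o(n^{-1})$.

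Then I would show that $\mathcal A_0\cap\mathcal A_S\cap\mathcal A_{S'}$ forces $E_{S,S'}$. Since $W\subseteq V\setminus V_S$, the component $C_W$ of $T_1(\Gamma[W])$ lies inside a component of $T_1(\Gamma[V\setminus V_S])$ of size at least $|C_W|=\Omega(n^2)$; by $\mathcal A_S$, all components of $T_1(\Gamma[V\setminus V_S])$ of size $\Omega(n^2)$ --- in particular the one containing $C_W$ --- lie in a single component $G$ of $T_1(\Gamma)$, so $C_W\subseteq G$. The same argument with $S'$ produces a component $G'$ of $T_1(\Gamma)$ with $C_W\subseteq G'$ and containing all components of $T_1(\Gamma[V\setminus V_{S'}])$ of size $\Omega(n^2)$. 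As $C_W\neq\emptyset$ lies in both $G$ and $G'$ and these are components of $T_1(\Gamma)$, we get $G=G'$; hence every component of $T_1(\Gamma[V\setminus V_S])$ of size $\Omega(n^2)$ and every component of $T_1(\Gamma[V\setminus V_{S'}])$ of size $\Omega(n^2)$ lies in the single component $G=G'$ of $T_1(\Gamma)$, which is precisely the statement that $E_{S,S'}$ holds. Therefore $\mathbb P(\overline{E_{S,S'}})\le\mathbb P(\overline{\mathcal A_0})+\mathbb P(\overline{\mathcal A_S})+\mathbb P(\overline{\mathcal A_{S'}})=O(n^{-1})$. Finally, with $M=\lfloor\log n/(8\log\log n)\rfloor$ one has $\binom{2M^2}{2M}\le(eM)^{2M}=\exp\bigl((\tfrac14+o(1))\log n\bigr)=n^{1/4+o(1)}$ (the same estimate as in the proof of Claim~\ref{claim: all parts good}), so $\binom{2M^2}{2M}^{-2}\ge n^{-1/2-o(1)}\gg n^{-1}$ and hence $\mathbb P(\overline{E_{S,S'}})=O(n^{-1})=o\bigl(\binom{2M^2}{2M}^{-2}\bigr)$, as required.

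I do not expect a deep difficulty here once the tools of Section~\ref{section: preliminaries} are in hand; the two points that require care are (i) verifying that the hypotheses $1\ll M\ll p^{-1}$ and $n^{-1/2}\ll p\le 100(\log n/n)^{1/3}$ of the Giant-connecting lemma hold uniformly on the whole range $R$, and (ii) the logical subtlety that merging the giants within $T_1(\Gamma[V\setminus V_S])$ and within $T_1(\Gamma[V\setminus V_{S'}])$ separately is not enough --- one must also certify that the two resulting blobs are the \emph{same} component of $T_1(\Gamma)$, which is exactly what the common-core giant $C_W$ on $W$ achieves.
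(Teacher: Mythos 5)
Your proof is correct, and it takes a genuinely different decomposition from the paper's. The paper's argument is a one-shot application of the ``Further'' part of Proposition~\ref{prop: connecting lemma}: it sets $W_1 = V_S\cap V_{S'}$ (which has $2M-1$ parts, so $|W_1| = (1+o(1))n/M$) and $W_2 = V\setminus(V_S\cap V_{S'})$, invokes Corollary~\ref{cor: giant exists} to get a giant in $T_1(\Gamma[W_2])$, and concludes from the ``Further'' part that all $\Omega(n^2)$-size components of $T_1(\Gamma[W_2])$ merge in $T_1(\Gamma)$. Since $V\setminus V_S$ and $V\setminus V_{S'}$ are both subsets of $W_2$, their giants sit inside giants of $T_1(\Gamma[W_2])$, and $E_{S,S'}$ follows (this last inclusion step is left implicit in the paper). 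Your argument instead calls the ``Further'' part twice --- once with $W_1 = V_S$ and once with $W_1 = V_{S'}$ --- and then welds the two resulting blobs of $T_1(\Gamma)$ into one via an anchor giant $C_W$ living on the common core $W = V\setminus V_{S\cup S'}$. Both approaches rely on the same ingredients (Corollary~\ref{cor: giant exists}, the ``Further'' part, the monotonicity $T_1(\Gamma[U])\subseteq T_1(\Gamma[U'])$ for $U\subseteq U'$, and the numerology $\binom{2M^2}{2M}^{-2}\gg n^{-1}$); the paper's route is more economical, while yours is slightly more explicit about why the two sides end up in the same component of $T_1(\Gamma)$, spelling out a step the paper glosses over.
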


\begin{proof}
Our condition on $M$ ensures $\vert V_S\cap V_{S'}\vert =(1+o(1))\frac{n}{2M^2}(2M-1)=(1+o(1)) \frac{n}{M}=(1+o(1)) \frac{8n\log \log n}{\log n}$ satisfies $1\ll M\ll p^{-1} $. By Corollary~\ref{cor: giant exists}, with probability $1-O(n^{-1})$ there exists a square component in $\Gamma[V \setminus V_S\cap V_{S'} ]$ of size $\Omega(n^{2})$. We can thus apply the `further' part of Propositions~\ref{prop: connecting lemma}	 to deduce that with probability $1-O(n^{-1})-\exp\left(-\Omega(n^2p^2)\right)= 1-o\left({\binom{2M^2}{2M}}^{-2}\right) $ every pair of square components of size $\Omega(n^2)$ from $\Gamma[V\setminus (V_S\cap V_{S'})]$ join up in $T_1(\Gamma)$.
\end{proof}

We are now ready to complete the proof of  Theorem~\ref{theorem: second component}. Applying Claim~\ref{claim: all parts good} and Markov's inequality, we see that a.a.s.\ the event $E_S$ occurs for every subset $S\in  [2M^2]^ {(2M)}$. Further, by Claim~\ref{claim: giants join up} and Markov's inequality, a.a.s\ the event $E_{S,S'}$ occurs for every pair of  subsets $S, S'\in  [2M^2]^ {(2M)}$ with $\vert S\cap S'\vert = 2M-1$.  We may therefore in the remainder of the proof condition on the a.a.s.\ event $\mathcal{E}:=\left(\bigcap_{S\in [2M^2]^{(2M)}}E_S\right)\cap \left(\bigcap_{S,S'\in [2M^2]^{(2M)}:\  \vert S\cap S'\vert =2M-1} E_{S,S'}\right)$.

Observe that if $C$ is a component in $T_1(\Gamma)$ of size at least $\binom{2M}{2}$, then the support of $C$ has size at least $2M$, and by~\cite[Lemma 5.5]{behrstock2018global}, there must be a $2M$--set $X\subseteq V$ and a subset  $C_X\subseteq C\cap X^{(2)}$ such that $C_X$ is a square component in $T_1(\Gamma[X])$ with full support. By Lemma~\ref{lemma: bound on number of diag in order 1 component}, we must  have that $\vert C_X\vert \geq M$.

Now, $X$ meets at most $2M$ of the parts $V_i$, so there exists a set $S\in [2M^2]^{(2M)}$ such that $X\subseteq V_S$. Since $E_S$ holds, we have that $C_X$ joins up in $T_1(\Gamma)$ to every giant ($\Omega(n^2)$-sized) component $C$ of $T_1(\Gamma[V\setminus V_S])$. Further, since $\bigcap_{S,S'}E_{S,S'}$ holds, we have that for every $S,S'\in [2M^2]^{(2M)}$ and every pair of giant components $C$ in $T_1(\Gamma[V\setminus V_S])$ and $C'$ in $T_1(\Gamma[V\setminus V_{S'}])$, $C$ and $C'$ join up to the same square component in $T_1(\Gamma)$ (since there exists a finite sequence $S_0=S, S_1, S_2, \ldots , S_{\ell}=S'$ with $S_i\in [2M^2]^{(2M)}$, $\vert S_i\cap S_{i+1}\vert=2M-1$ and $E_{S_i, S_{i+1}}$ occurring for every $i\geq 0$, and since by $E_{S_i}$ each of the $T_1(\Gamma[V\setminus V_{S_i}])$ contains a giant component).

It follows that there exists a unique  component in $T_1(\Gamma)$ of size greater than $\binom{2M}{2}\geq \frac{1}{33}\left(\frac{\log n}{\log \log n}\right)^2$, and this component has size $\Omega(n^2)$ (since  $E_S$ holds), proving the theorem.
\end{proof}

\subsection{The connectivity threshold in square percolation}\label{section: connectivity}
\begin{proof}[Proof of Theorem~\ref{theorem: connectivity}]
\textbf{Part (i)} follows from the work of Susse~\cite[Corollary
3.8]{susse2023morse}, who proved that for $n^{-1}\ll p\leq
(1-\varepsilon)\sqrt{\log n /n}$, a.a.s.\ $T_1(\Gamma)$ contains
isolated squares: induced copies of the $4$--cycle $C_4$ not sharing a
diagonal with any other induced copy of $C_4$. Each of these isolated 
squares 
corresponds to an isolated edge in $T_1(\Gamma)$, and thus an isolated
vertex in $S(\Gamma)$. The a.a.s\ existence of these  isolated vertices implies that
$S(\Gamma)$ is a.a.s.\ not connected for $p$ in this range.

\textbf{Part (iii)} is proved using a similar second-moment argument.
Suppose $p\leq \sqrt{2-\varepsilon}\sqrt{\log n /n}$.  Then we claim
that a.a.s.\ there are non-edges of $\Gamma$ that are not contained in
any induced square.  Such non-edges correspond to isolated vertices of
$T_1(\Gamma)$, and ensure the latter graph is not connected.  It is
therefore enough to prove our claim, which we now do.

Given a pair $e\in V(\Gamma)^{(2)}$, let $X_e$ be the indicator function of the event that $e$ is a non-edge of $\Gamma$ whose endpoints have no neighbour in common. Observe that if $X_e=1$, then certainly $e$ is not contained in any induced square of $\Gamma$. Set $X=\sum_{e}X_e$. Straightforward calculations then give
\begin{align*}
\mathbb{E}X= \binom{n}{2}(1-p)(1-p^2)^{n-2}=\left(\frac{1}{2}+o(1)\right)n^2e^{-p^2n}\geq \left(\frac{1}{2}+o(1)\right)n^{\varepsilon},	
\end{align*}
 while 
\begin{align*}
\mathbb{E}(X^2)
&=\binom{n}{2}(1-p)^2\left((1-p^2)^{n-2} + 2(n-2)(1-2p^2+p^3)^{n-3} +\binom{n-2}{2}(1-p^2)^{2(n-4)}(1-p^4) \right)\\
&= \left(\mathbb{E}X\right)^2(1+o(1)).	
\end{align*}
Thus $\mathrm{Var}(X)=o\left(\left(\mathbb{E}X\right)^2 \right)$, and
it follows immediately from Chebyshev's inequality that a.a.s.
$X=(1+o(1))\mathbb{E}X\gg 1$.  Thus a.a.s.\ for $p \leq
\sqrt{2-\varepsilon}\sqrt{\log n /n}$, $T_1(\Gamma)$ contains isolated
vertices and is not connected.

\textbf{For Part (ii)}, consider $p=p(n)\geq (1+\varepsilon)\sqrt{\log n}/\sqrt{n}$. By Proposition~\ref{prop: connectivity above n to the -1/3}, we may assume that $p=p(n)\leq 100\left(\log (n)/n\right)^{\frac{1}{3}}$. Further, by Theorem~\ref{theorem: second component}, for $p$ in this range we know that a.a.s.\  there exists a unique square component of size at least $(\log n)^2$.  Thus all that remains to be shown is that a.a.s.\ $T_1(\Gamma)$ contains no smaller non-trivial component, i.e.,  that $T_1(\Gamma)$ consists of a giant component together with a collection of isolated vertices (corresponding to non-edges of $\Gamma$ not belonging to  any induced square of $\Gamma$).

For $k\geq 4$ and a $k$--set $S\subseteq V(\Gamma)$, let $X_S$ be the indicator function of the event that there exists a connected component $C$ of $T_1(\Gamma)$ with support $\mathrm{supp}(C)= S$.  Note that every non-trivial connected component of $T_1(\Gamma)$ must have support of size at least $4$, by definition.  Clearly for $X_S$ to be non-zero, we must have that  $\Gamma[S]$ contains at least $2k-4$ edges (by Proposition~\ref{prop: combinatorial characterisation of thickness}).

Further, suppose $C$ is a square component with support exactly $S$. Then for every non-edge $f=\{x_1,x_2\}\in \overline{\Gamma}[S]$ belonging to $C$, every neighbour $f'=\{y_1,y_2\}$ of $f$ in $T_1(\Gamma)$ and every vertex $v\in V(\Gamma)\setminus S$ sending edges to both of the endpoints of $f$, $v$ must also send edges to both of the endpoints of $f'$. Indeed, suppose $vx_1, vx_2$ are edges in $\Gamma$ but $vy_i$ is not. Then the $4$--set $\{vx_1x_2y_i\}$ induces a square in $\Gamma$ connecting $f$ to $vy_i$ in $T_1(\Gamma)$, contradicting  the fact that $v\notin S=\mathrm{supp}(C)$. Since $X$ has full support on $S$, this implies that any vertex $v\in V(\Gamma)\setminus S$ sending edges to both endpoints of $f$ must in fact send edges to every vertex of $S$ (indeed for any $z\in S$, there is a sequence of non-edges $f_0=f$, $F_1, f_2, \ldots , f_t$ such that for each $i$ $f_i\cup f_{i+1}$ induces a square in $\Gamma[S]$ and $z\in f_t$).

By Lemma~\ref{lemma: bound on number of diag in order 1 component}, any square component $C$ with $\mathrm{supp}(C)=S$ must contain a set $L=\{f_1, f_2$, $\ldots f_{\lceil k/2\rceil}\}$ of at least $k/2$ non-edges in $\Gamma[S]$. Thus the discussion from the previous two paragraphs tells us that if $X_S>0$ then all of the following must hold:
\begin{enumerate}[(a)]
	\item $\vert S\vert =k\geq 4$;
	\item $\Gamma[S]$ contains a least $2k-4$ edges;
	\item there is a set $L\subseteq S^{(2)}$ of $\lceil k/2\rceil$ distinct pairs of vertices from $S$ such that for every vertex $v\in V(\Gamma)\setminus S$, either $v$ sends edges to all of $S$ or for every $f\in L$, at least one of the endpoints of $f$ does not receive an edge from $v$.
\end{enumerate}
We shall use this information to bound the probability that $X_S>0$. Fix a $k$--set $S$, and an arbitrary $\lceil k/2\rceil $--set $L\subseteq S^{(2)}$. Observe that for any $v\in V(\Gamma)\setminus S$, the events $\left(E_{v, f}\right)_{f\in L}$ that $v$ sends edges to both endpoints of $f\in L$ are positively correlated $\mathrm{Bernoulli}(p^2)$ random variable. Further for $f_1\neq f_2$, the probability of $E_{v,f_1}\cap E_{v, f_2}$ is $p^3$ if $f_1\cap f_2\neq \emptyset$ and $p^4$ otherwise. We may thus apply Janson's inequality with $\mu= \vert L\vert p^4$ and $\Delta \leq 2\binom{\vert L}{2}p^3$ to deduce that
\begin{align}\label{eq: jansonbound}
\mathbb{P}\left(\bigcap_{f\in L} \overline{E_{v,f}}\right)\leq e^{- \frac{k}{2} p^2+ \frac{k^2}{8}p^3}.
\end{align}
Now write $\mathcal{A}_{v,L}$ for the event that either (i) $\left(\bigcap_{f\in L} \overline{E_{v,f}}\right)$ occurs or (ii) $v$ sends an edge to every vertex in $S$.  Note that for any choice of $L$, the events $\left(\mathcal{A}_{v,L}\right)_{v\in V(\Gamma)\setminus S}$ are mutually independent, and are independent of the state of edges inside $\Gamma[S]$. Noting for  $p$ in our range and $k\geq 4$ we have $p^ke^{\frac{k}{2}p^2} \leq p^4e^{2p^2}=o(n^{-1})$, it follows from this independence, inequality~\eqref{eq: jansonbound}, conditions (a)--(c) above, our bounds on $p$ and standard bounds for binomial coefficients that for any $k$ with $4\leq k\leq (\log n)^2$ and any $k$--set $S$,
\begin{align*}
\mathbb{P}(X_S=1)&\leq \mathbb{P}\left(\vert E(\Gamma[S])\vert\geq 2k-4\right) \left(\sum_{L\subseteq S^{(2)}: \ \vert L\vert = \lceil k/2\rceil } \quad \prod_{v\in V(\Gamma)\setminus S} \mathbb{P}\left(\mathcal{A}_{v,L}\right)\right)\\
&\leq \binom{\binom{k}{2}}{2k-4} p^{2k-4}\binom{\binom{k}{2}}{\lceil k/2\rceil } \left(p^ k+ e^{-\frac{k}{2}p^2+ \frac{k^2}{8}p^3}\right)^{n-k} \leq(1+o(1)) (ek)^{\frac{5k}{2}}p^{2k-4} e^{-\frac{k}{2}np^2(1-o(1))}.
\end{align*} 	
Setting $X_k:=\sum_{S\in V(\Gamma)^{(k)}} X_S$ and using linearity of expectation, we can exploit the inequality above to bound the expected number of number of square components of $T_1(\Gamma)$ with support of size $k$. Given $k$ with $4\leq k\leq (\log n)^2$ and a fixed $k$--set $S$, we have
\begin{align*}\label{eq: bound on Xk}
	\mathbb{E}(X_k) \leq \binom{n}{k} \mathbb{E}(X_S)< \left(\frac{en}{k} (ek)^{\frac{5}{2}}p^2 e^{-\frac{p^2 n(1+o(1))}{2}}\right)^k p^{-4}\leq o\left(\frac{(\log n)^{5k-2 }}{n^{\frac{(k-4)+k\varepsilon}{2}}}\right),
\end{align*} 	
with the first inequality coming from elementary bound $\binom{n}{k}\leq \left(\frac{en}{k}\right)^k$ on the binomial coefficient $\binom{n}{k}$, and the second one from the monotonicity of $x\mapsto xe^{-x}$ in $[1,+\infty)$, our upper bound on $k$ (namely $k\leq (\log n)^2$), and our lower bound on $p$ (namely$p\geq (1+\varepsilon)\sqrt{\log n}/\sqrt{n}$). Applying Markov's inequality and linearity of expectation, this implies that
\begin{align*}
	\mathbb{P}\left(\sum_{k=4}^{(\log n)^2} X_k>0\right)\leq \sum_{k=4}^{(\log n)^2}\mathbb{E}X_k =O\left(\frac{(\log n)^{20 }}{n^{4\varepsilon}}\right)=o(1),
\end{align*}
and thus a.a.s.\ there is no square component with support of size between $4$ (the smallest possible size) and $(\log n)^2$. By Theorem~\ref{theorem: second component} we know that a.a.s.\  $X_n=1$ and $X_k=0$ for $k\in [(\log n)^2, n-1]$. Thus there is an a.a.s.\ unique non-trivial square component, and $S(\Gamma)$ is a.a.s.\ connected as required.

\textbf{Part (iv)} follows directly from Part (iii) and Markov's inequality: for $p=p(n)$ satisfying $100\left( \log (n)/n\right)^{\frac{1}{3}} \leq p\leq 1-\omega(n^{-2})$, we showed in Proposition~\ref{prop: connectivity above n to the -1/3} that $T_1(\Gamma)$ is a.a.s.\ connected. Further by Part (iii) established above, we know that for  $\sqrt{2+\varepsilon}\sqrt{\log (n)/n}\leq p(n)\leq 100\left(\log (n)/n\right)^{\frac{1}{3}}$, there is an a.a.s.\ unique non-trivial component in $T_1(\Gamma)$. Thus all we need to rule out is the presence of isolated vertices for $p$ in that range, i.e., the existence of `bad' non-edges of $\Gamma$ that are not contained in any induced square of $\Gamma$. Given $e\in V(\Gamma)^{(2)}$, let $X_e$ denote the indicator function of the event that $e$ is a non-edge and that the joint neighbourhood of the endpoints of $e$ forms a clique. Set $X=\sum_e X_e$, so that $X$ is precisely the number of `bad' non-edges of $\Gamma$. Then by Markov's inequality and linearity of expectation, our bounds on $p$  as well as the facts that $x\mapsto (1+x+x^2+x^3)e^{-x}$ is monotonous decreasing when $x\geq 3$ and that $np^{\frac{9}{2}} =o((\log n)^2 n^{-\frac{1}{2}})$, we have that
\begin{align*}
\mathbb{P}\left(X>0\right)\leq \mathbb{E}X & \leq \binom{n}{2}(1-p)\sum_{i=0}^{n-2}\binom{n-2}{i}p^{2i}(1-p^2)^{n-2-i}p^{\binom{i}{2}}\\
&< n^2 \left((1+np^2 +n^2p^5+n^3p^9+n^4p^{14}+n^5p^{20})e^{-p^2(n-7)} +\sum_{i\geq 6} \left(np^{\frac{i+3}{2}}\right)^i \right)\\
&=O\left(n^{-\varepsilon}\log n\right)=o(1). 
\end{align*}
\noindent Thus a.a.s.\ $X=0$ and $T_1(\Gamma)$ is connected.
\end{proof}

\subsection{Implications for the geometry of the random RACG}\label{section: geometry of RACG}
Let $\Gamma$ be any graph.  Fioravanti, Levcovitz and
Sageev~\cite{FioravantiLevcovitzSageev:cubicalrigidity} introduced the
following notion of a \emph{bonded} square: an induced square $X$ in
$\Gamma$ is bonded if there exists an induced square $X'$ in $\Gamma$
such that $X$ and $X'$ have exactly three vertices in common.  In
other words, an induced square with diagonals $f$ and $f'$ is bonded
if and only if there exists some vertex $v\notin f\cup f'$ sending
edges to both endpoints of exactly one of $f$ and $f'$ (and thus
sending at least one non-edge to the other diagonal).

As one of their main results, Fioravanti, Levcovitz and Sageev
established
in~\cite[Corollary~C]{FioravantiLevcovitzSageev:cubicalrigidity} that
if every square in a graph $\Gamma$ is bonded, then the associated
RACG $W_{\Gamma}$ satisfies coarse cubical rigidity, and every
cocompact cubulation of $W_{\Gamma}$ induces the same coarse median
structure on $W_{\Gamma}$ as the one given by its Davis complex.  They further
suggested that this should be an if and only if, i.e., that the
existence of non-bonded squares implies non-uniqueness of the coarse
median structure on $W_{\Gamma}$, though they were unable to prove
this.

We now show how a slight modification of our proof of
Theorem~\ref{theorem: connectivity}(ii) allows us to obtain
a threshold for the disappearance of non-bonded squares in $\Gamma
\sim \mathcal{G}_{n,p}$ and thereby for coarse cubical rigidity.

\begin{proof}[Proof of Theorem~\ref{theorem:uniquecoarsemedian}]
Let $\Gamma\sim \mathcal{G}(n,p)$.  Given a $4$--set $S$ in
$V(\Gamma)$, let $X_S$ be the indicator function of the event that $S$
induces a non-bonded square in $\Gamma$.  For this event to hold, 
first, 
$\Gamma[S]$ must induce a square, which occurs with probability
$3p^4(1-p)^2$.  Secondly, for every vertex $v\notin S$, if $v$ sends
edges to both ends of a diagonal of $\Gamma[S]$ then it must in fact
send edges to all four vertices in $S$; this occurs with probability
$1-2p^2(1-p^2)$, independently at random for each $v\in
V(\Gamma)\setminus S$.  In particular, we have
\begin{align*}
\mathbb{E}X_S=3p^4(1-p)^2 (1-2p^2(1-p^2))^{n-4}\leq 3n^{-2}  \left(np^2(1-p)\right)^2 \exp\left(-2(1+p) \left(np^2(1-p) \right) +O(1)\right). 
\end{align*}
Fix $\varepsilon>0$.  For $p=p(n)$ satisfying the bounds $(1+\varepsilon)\frac{\sqrt{\log n}}{\sqrt{n}}\leq p \leq 1-(1+\varepsilon)\frac{\log n}{n}$, we have that $np^2(1-p) \geq (1+\varepsilon +o(1))\log n$. It then follows from the monotonicity of $x\mapsto x^2 \exp(-x)$ for $x\in \mathbb{R}_{\geq 2}$ and the inequality above that $\mathbb{E}X_S\leq  O\left(n^{-4-2\varepsilon +o(1)}\right)=o(n^{-4})$.

Taking a union bound over all $4$--sets $S$ in $V(\Gamma)$ and 
applying Markov's inequality, we deduce that for $p(n)\in [(1+\varepsilon)\frac{\sqrt{\log n}}{\sqrt{n}}, 1-(1+\varepsilon)\frac{\log n}{n}]$ we have that a.a.s.\ $\sum_S X_S=0$ holds, i.e.,  that a.a.s.\ every square in $\Gamma$ is bonded. Together with~\cite[Corollary C]{FioravantiLevcovitzSageev:cubicalrigidity}, this implies the desired conclusion regarding the coarse cubical rigidity of $W_{\Gamma}$.
\end{proof}

\section{Concluding remarks}\label{section: concluding remarks}

From the point of view of geometric group theory, this paper raises a 
number of questions, some probabilistic in nature and others not. 

\medskip

The graphs constructed in Section~\ref{subsection: distinct but same 
support} came as somewhat of a surprise to the authors. These are very concrete 
examples, but the phenomenon of having multiple distinct square components 
each of full support is new, and one suspects that the RACG associated to these graphs may 
have other interesting geometric properties. We believe these examples warrant further investigation in the future.

As a juxtaposition to Theorem~\ref{theorem:uniquecoarsemedian}, we note that 
Mangione recently showed that there are many right-angled Coxeter 
groups which admit uncountably many coarse median structures  
\cite{Mangione:shortHHGI}. His examples are beasts of a different 
form from the coarse cubical medians at play in our theorem, since they need not be cubical in nature and so in general can behave in a much 
wilder fashion. 
Moreover, while the RACGs he shows this for do not overlap with 
our examples, there is no a priori reason that a group with a 
unique cubical coarse median structure could not have uncountably many 
coarse median structures.  Accordingly, it would be interesting to 
know whether 
his techniques (or others) could be adapted to answer the following question, 
for which we do not even know whether the answer is finite, countably infinite or uncountable!

\begin{question}\label{question: coarse medians}
The random RACG for $p$ in the range given in Theorem~\ref{theorem:uniquecoarsemedian} a.a.s.\ has a unique cubical coarse 
median structure. How many coarse median structures can it possess? Is this cardinality concentrated, and how does its distribution change as $p$ varies? 
\end{question}
\noindent A related question which is left open by our work is:
\begin{question}\label{question: RAAG coarse medians}
For which $p$ does the random RAAG  a.a.s.\ have a unique cubical coarse 
median structure? For $p$ outside that range, how many cubical coarse median structures can it possess? Is this cardinality concentrated, and how does its distribution change as $p$ varies? 
\end{question}
As we were putting the finishing touches to this paper Fioravanti and Sisto~\cite{FioravantiSisto25} 
released a preprint in which they showed that some groups with a mix of hyperbolic/Euclidean 
geometry different than that considered here, also have a unique coarse median (e.g., products of higher 
rank free groups), further highlighting the potential interest of the two questions above.

\medskip

From a probabilistic perspective, there are also a  number of natural problems regarding the connectivity properties of $T_1(\Gamma)$, $\Gamma\sim
\mathcal{G}_{n,p}$. Foremost among them is the question of whether a.a.s.\ as soon as a giant square component emerges, all other components have logarithmic size. We stated this more formally as Conjecture~\ref{conjecture: second component}. Extending the investigation into the connectivity properties of $T_1(\Gamma)$, it is natural to ask how the diameter of that random
graph behaves. The most basic question in this direction is:
\begin{question}\label{question: diameter}
	What is the typical behaviour of the diameter of $T_1(\Gamma)$ when $\sqrt{2\log(n)/n}\leq p\leq 1-\omega(n^{-2})$?
\end{question}
\noindent Heuristically, for $p=o(1)$ in this range one expects the number of vertices of $T_1(\Gamma)$ at distance $d$ from a given vertex to grow like $(n^2p^4/2)^d$. This suggests the following behaviour may be likely:
\begin{enumerate}[(i)]
	\item for $p= \sqrt{f(n)/n}$ with $(2+\varepsilon)\log n< f(n)$  and $\log (f(n))=o(\log n)$, the diameter of $T_1(\Gamma)$ has a.a.s.\ size  $O(\log n / \log f)$;
	\item for every integer $k \geq 2$ and $n^{-\frac{1}{2}+\frac{1}{2k} }\ll p \ll n^{-\frac{1}{2}+\frac{1}{2(k-1)}}$, the diameter of $T_1(\Gamma)$ is a.a.s.\  exactly equal to $k$;
	\item for every integer $k \geq 3$ and $p=\theta(n^{-\frac{1}{2}+\frac{1}{2k} })$, the diameter of $T_1(\Gamma)$ is a.a.s.\  equal to $k$ or $k-1$.
\end{enumerate}
\noindent (Note that Proposition~\ref{prop: large p connectivity} establishes the behaviour in part (ii) above  in the case $k= 2$.)

\subsection*{Acknowledgements}
The initial discussions that led to this paper were carried out when the
first author visited the second and third author in Ume{\aa} in
January 2023 at the occasion of the third Midwinter Meeting in
Discrete Probability.  The hospitality of the Ume{\aa} mathematics
department and the financial support of the Simons Foundation 
are gratefully acknowledged. In
addition the research of the second
and third authors is supported by the Swedish Research Council grant
VR 2021-03687.

\bibliographystyle{plain}

\end{document}